\documentclass[12pt,centertags,oneside,reqno]{amsart}
\usepackage{amsmath,amsthm,amssymb,bm}

\usepackage{amscd,amsxtra,calc}
\usepackage{cmmib57}
\usepackage{url}

\usepackage{amscd}
\usepackage{pstricks}
\usepackage{color}
\setcounter{MaxMatrixCols}{25}

\usepackage[a4paper,width=16.2cm,top=3cm,bottom=3cm]{geometry}

\numberwithin{equation}{section}

\theoremstyle{plain}

\newtheorem{thm}{Theorem}[section]
\newtheorem{theorem}[thm]{Theorem}
\newtheorem*{thma}{Theorem A}
\newtheorem*{corb}{Corollary B}
\newtheorem*{thmc}{Theorem C}
\newtheorem*{thmc'}{Theorem C'}
\newtheorem*{thmd}{Theorem D}

\newtheorem{lemma}[thm]{Lemma}
\newtheorem{corollary}[thm]{Corollary}
\newtheorem{proposition}[thm]{Proposition}

\theoremstyle{definition}

\newtheorem{remark}[thm]{Remark}

\newtheorem{definition}[thm]{Definition}

\newtheorem{example}[thm]{Example}
\newtheorem{examples}[thm]{Examples}

\newtheorem{question}[thm]{Question}

\numberwithin{equation}{section}

\newtheorem{report}{report}
\newtheorem{exercise}{excercise}[section]


\newenvironment{rem}{\begin{remark}\rm}{\end{remark}}


\renewcommand{\labelenumi}{\rm{(}\arabic{enumi}\rm{)}}

\usepackage{mleftright}
\usepackage{mdwlist}

\usepackage{etoolbox}

\makeatletter
\def\subsection{\@startsection{subsection}{1}%
  \z@{.5\linespacing\@plus.7\linespacing}{-.5em}%
  {\normalfont\itshape}}
\patchcmd{\@sect}
  {\@addpunct.}
  {}
  {}
  {}
\makeatother

\makeatletter
\def\subsection{\@startsection{subsection}{2}%
  \z@{.5\linespacing\@plus.7\linespacing}{.3\linespacing}%
  {\normalfont\bfseries}}
\makeatother

\usepackage{caption}

\usepackage{amsmath}

\usepackage{mathtools}
\DeclarePairedDelimiter{\abs}{\lvert}{\rvert}

\newcommand{\id}{\mathrm{id}}

\usepackage{graphicx}

\usepackage{comment}
\title[]{Dynamical degrees of automorphisms of complex simple abelian varieties and Salem numbers}

\author{Yutaro Sugimoto
}
\begin{document}
\maketitle

\begin{abstract}
 We prove that every Salem number can be realized as the first dynamical degree of an automorphism of a complex simple abelian variety.
 Also by using the similar technique, we prove that the set of first dynamical degrees of automorphisms of complex simple abelian varieties except $1$ has the minimum value when fixing the dimension of complex simple abelian varieties.
 Moreover, we prove that there is an automorphism of a complex simple abelian variety, whose first dynamical degree is as close as possible to $1$.
 These results are inspired by the work of Nguyen-Bac Dang and Thorsten Herrig.
\end{abstract}

\section{Introduction}\label{Introduction}
 For a $g$-dimensional compact K\"{a}hler manifold $X$ and a dominant meromorphic map $f\colon X\dashrightarrow X$, the $k$-th dynamical degree $\lambda_k(f)$ of $f$ is defined as 
 \begin{align*}
  \lambda_k(f):=\lim_{n\to+\infty}||(f^n)^{*}:\mathrm{H}^{k,k}(X)\rightarrow\mathrm{H}^{k,k}(X)||^{\frac{1}{n}}
 \end{align*}
 where $||\cdot||$ is a norm for linear transformations and $\mathrm{H}^{k,k}(X)$ ($0\leq k\leq g$) is the $(k,k)$-Dolbeault cohomology.
 The dynamical degrees are always not less than $1$ (see e.g. \cite{DS05} or \cite{DS17}).\par
 As mentioned in Section \ref{Dynamical degree}, if $f$ is a holomorphic map, $\lambda_k(f)=\rho(f^{*})$ where $\rho$ is the spectral radius and $f^*$ is on $\mathrm{H}^{k,k}(X)$.\par
 There are some results about the relevance between the first dynamical degrees and Salem numbers as well as Pisot numbers (cf.\ Section \ref{Algebraic preparation}).\par
 \begin{theorem}[{\cite[Theorem 5.1(1)]{DF01}}]
  Let $X$ be a compact K\"{a}hler surface and $f\colon X\dashrightarrow X$ be a bimeromorphic map with $\rho(f^{*})>1$ for the operator $f^{*}$ on $\mathrm{H}^{1,1}(X)$ where $\rho$ is the spectral radius.\par
  Then the operator $f^{*}$ has exactly one eigenvalue $\lambda\in\mathbb{R}_{>0}$, of modulus $|\lambda|>1$, and in fact $\lambda=\rho(f^{*})$.
 \end{theorem}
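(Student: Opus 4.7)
The plan is to combine the Hodge index theorem with a Perron--Frobenius argument on the nef cone, and then use the resulting Lorentzian structure to pin down the remaining spectrum.

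First, I would record the relevant structural facts: the intersection form $q$ on $H^{1,1}(X,\mathbb{R})$ has Lorentzian signature $(1, h^{1,1}(X) - 1)$ by the Hodge index theorem, and the nef cone $\mathrm{Nef}(X)$ is a closed, salient, convex cone with nonempty interior contained in the forward component of $\{\alpha : q(\alpha)\geq 0\}$. Choosing a resolution of indeterminacy $Y \xrightarrow{\pi} X$ with $g := f\circ\pi\colon Y \to X$ holomorphic, one defines $f^* := \pi_* g^*$ on $H^{1,1}(X,\mathbb{R})$; pulling back a K\"ahler class by $f$ gives a nef class, so $f^*(\mathrm{Nef}(X)) \subseteq \mathrm{Nef}(X)$.

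Next I would apply the Perron--Frobenius (Birkhoff) theorem for cones to the $f^*$-action on $\mathrm{Nef}(X)$: there exists $v \in \mathrm{Nef}(X)\setminus\{0\}$ with $f^*v = \lambda v$ and $\lambda = \rho(f^*) > 1$. This already produces a real positive eigenvalue equal to the spectral radius, so the remaining task is to show that no other eigenvalue has modulus $>1$.

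For this I would exploit the Lorentzian geometry. Using the bilinear comparison $q(f^*\alpha, f^*\beta) = q(\alpha,\beta) - E_\alpha \cdot E_\beta$, where $E_\alpha, E_\beta$ are the $\pi$-exceptional corrections on $Y$ extracted from the orthogonal decomposition $H^{1,1}(Y,\mathbb{R}) = \pi^* H^{1,1}(X,\mathbb{R}) \oplus \langle\mathrm{exc}(\pi)\rangle$, one passes to an $f^*$-invariant subquotient of $H^{1,1}(X,\mathbb{R})$ where $f^*$ acts as a genuine $q$-isometry (equivalently, one first replaces $f$ by an algebraically stable birational model, as in Diller--Favre). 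On this Lorentzian subquotient the Perron eigenvector is forced to be isotropic, $q(v) = 0$ (since $\lambda^2 q(v) = q(f^*v) = q(v)$ and $\lambda > 1$), and the null-orthogonal decomposition yields a negative definite form on $v^\perp/\langle v\rangle$ on which $f^*$ acts orthogonally; hence every eigenvalue of $f^*$ other than $\lambda$ and $1/\lambda$ has modulus $1$, while $1/\lambda$ appears as the unique remaining real eigenvalue by a determinant computation in $O(q)$. Consequently $\lambda$ is the unique eigenvalue of $f^*$ of modulus strictly greater than $1$.

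The main obstacle is the bimeromorphic (not holomorphic) nature of $f$: $f^*$ fails to preserve $q$ exactly on $H^{1,1}(X,\mathbb{R})$ (as the Cremona example on $\mathbb{P}^2$ already shows, where $f^*H = 2H$ but $(f^*H)^2 = 4 \ne 1 = H^2$), so one must either restrict to an $f^*$-invariant Lorentzian subquotient or first pass to an algebraically stable birational model of $f$ on which the isometry property is restored, and then transfer the spectral conclusion back to $H^{1,1}(X,\mathbb{R})$. Reconciling this defect with the Lorentzian spectral analysis is the technical core of the argument.
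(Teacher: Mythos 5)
The paper does not prove this statement at all; it is quoted verbatim from \cite{DF01}, so your sketch can only be measured against the argument given there. The first half of your proposal is sound and does match the standard opening moves: $f^*$ preserves the nef cone, and Perron--Frobenius for closed salient convex cones produces a nef eigenvector with eigenvalue $\lambda=\rho(f^*)>1$. The problem is the second half.

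The step ``one first replaces $f$ by an algebraically stable birational model\ldots on which the isometry property is restored'' is a genuine gap, and it is exactly where the difficulty of the theorem lives. Algebraic stability only guarantees $(f^n)^*=(f^*)^n$; it does \emph{not} make $f^*$ a $q$-isometry, and in general no birational model (and no invariant Lorentzian subquotient) can. If $f^*$ acted as an isometry of a Lorentzian form on an integral lattice, its characteristic polynomial would be reciprocal up to sign and $\rho(f^*)$ would be a reciprocal algebraic unit (a Salem or reciprocal quadratic number); but a generic quadratic Cremona transformation is algebraically stable on a suitable rational surface and has $\lambda_1(f)=2$, which is not a unit. The dichotomy you are implicitly assuming (isometric action) is precisely the \emph{conclusion} of the Diller--Favre/Blanc--Cantat analysis in the automorphism case, and your argument says nothing in the complementary (Pisot) case. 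Relatedly, the isotropy $q(v)=0$ of the Perron eigenvector does not follow from ``$\lambda^2q(v)=q(v)$'' once the isometry property is dropped. What the actual proof uses instead are two facts that do survive bimeromorphy: the signed defect $q(f^*\alpha,f^*\alpha)=q(\alpha,\alpha)-q(e_\alpha,e_\alpha)\ge q(\alpha,\alpha)$ (since the exceptional part $e_\alpha$ lives in a negative definite subspace), and the adjunction $q(f^*\alpha,\beta)=q(\alpha,f_*\beta)$, which shows that $f^*$ and $f_*$ have the same spectrum and that any eigenvector of $f^*$ for an eigenvalue $\mu\neq\lambda$ is $q$-orthogonal to the nef Perron eigenvector of $f_*$; one then exploits the isotropy of the nef eigenvectors and the negative semidefiniteness of $q$ on their orthogonal complements to exclude a second eigenvalue of modulus greater than $1$. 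As written, your proof covers only the case in which $f$ is birationally conjugate to an automorphism.
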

 \begin{remark}
  This theorem implies that the first dynamical degree of a birational map of a projective surface over the complex number field is $1$ or a Salem number or a Pisot number (cf.\ \cite[Theorem 1.2]{BC16}).
 \end{remark}
 \begin{theorem}[{\cite[Theorem 2.1 (i)]{DH22}}]\label{DH22 theorem}
  Let $\lambda$ be a Salem number of degree $g$.
  Then there exists an automorphism $f$ of a $g$-dimensional simple abelian variety $X$ with totally indefinite quaternion multiplication (i.e., the endomorphism algebra of $X$ is a totally indefinite quaternion algebra) with dynamical degrees
  \begin{align*}
   \lambda_1(f)=\cdots=\lambda_{g-1}(f)=\lambda^2 \quad \textit{and} \quad \lambda_0(f)=\lambda_g(f)=1.
  \end{align*}
 \end{theorem}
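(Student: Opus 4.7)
The plan is to realize $f$ as right multiplication by a carefully chosen unit of a totally indefinite quaternion division algebra on an abelian variety of Shimura type III. Since a Salem number has even degree, write $g = 2h$ and set $K := \mathbb{Q}(\lambda + \lambda^{-1})$, a totally real field of degree $h$: under its $h$ real embeddings, $\lambda + \lambda^{-1}$ has image $\lambda + \lambda^{-1}$ at one distinguished place $\sigma_0$ and $\beta_j + \overline{\beta_j} \in (-2, 2)$ at the other $h - 1$ places, where $\beta_j, \overline{\beta_j}$ are the modulus-one conjugates of $\lambda$. The next step is to fix a totally indefinite quaternion division algebra $D$ over $K$ into which the quadratic extension $L := K(\lambda) = \mathbb{Q}(\lambda)$ embeds; by Hasse--Brauer--Noether, choosing $D$ to ramify at exactly two finite primes of $K$ that are inert (or ramified) in $L/K$ achieves both.

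Fix an embedding $L \hookrightarrow D$ and let $\tau \in D^{\times}$ be the image of $\lambda$, so $\tau$ has reduced trace $\lambda + \lambda^{-1} \in K$ and reduced norm $1$. Pick a maximal order $\mathcal{O} \subset D$ containing $\tau^{\pm 1}$ and take $\Lambda := \mathcal{O}$ as a $\mathbb{Z}$-lattice of rank $2g$ in $V := D$. A complex structure $J$ on $V_{\mathbb{R}}$ commuting with the left $D$-action lies in the commutant $D^{\mathrm{op}} \otimes_{\mathbb{Q}} \mathbb{R} \cong \prod_{\sigma} M_2(\mathbb{R})$ and squares to $-1$. By Shimura's theory of polarisable abelian varieties of type III, $J$ can be chosen compatibly with a polarisation so that $X := V_{\mathbb{R}}/\Lambda$ is an algebraic abelian variety of dimension $g$ with $\mathrm{End}^{0}(X) \supseteq D$, and for a Hodge-generic choice of $J$ equality holds, so $X$ is simple of type III. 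Right multiplication by $\tau$ preserves $\Lambda$, commutes with both the left $D$-action and with $J$, and hence descends to an automorphism $f \in \mathrm{Aut}(X)$.

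To compute the dynamical degrees, observe that at each real place $\sigma$, right multiplication by $\tau$ on $V_{\sigma} = D \otimes_{K,\sigma} \mathbb{R} \cong M_2(\mathbb{R})$ has characteristic polynomial $x^{2} - \sigma(\lambda + \lambda^{-1}) x + 1$, and under $J_{\sigma}$ the $4$-dimensional real space $V_{\sigma}$ splits into two $\tau$-invariant complex $2$-planes $V_{\sigma}^{1,0} \oplus V_{\sigma}^{0,1}$, each carrying that same polynomial. Summing over $\sigma$, the eigenvalues of $f^{*}$ on $H^{1,0}(X)$ are precisely $\lambda, \lambda^{-1}$ together with the $\beta_j, \overline{\beta_j}$, so exactly one of them exceeds $1$ in modulus. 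Since $H^{k,k}(X) \cong \bigwedge^{k} H^{1,0}(X) \otimes \bigwedge^{k} H^{0,1}(X)$, the eigenvalues of $f^{*}$ on $H^{k,k}(X)$ are the products $\prod_{i \in I} \alpha_{i} \prod_{j \in J} \overline{\alpha_{j}}$ over pairs of size-$k$ subsets $I, J \subseteq \{1, \ldots, g\}$. For $1 \leq k \leq g - 1$ the maximum modulus is $\lambda \cdot \lambda = \lambda^{2}$ (include $\lambda$ and exclude $\lambda^{-1}$ in each factor), while $\lambda_{0}(f) = \lambda_{g}(f) = 1$ because the product of all $\alpha_i$ equals $\lambda \cdot \lambda^{-1} \cdot \prod_j \beta_j \overline{\beta_j} = 1$.

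The delicate step is ensuring $\mathrm{End}^{0}(X) = D$ rather than a strictly larger algebra containing $D$, equivalently that $X$ is genuinely simple and not merely isogenous to a product of lower-dimensional factors. This is a Mumford--Tate/dimension-count statement inside the Shimura variety parametrising $\mathcal{O}$-polarised abelian varieties of type III, and is essentially the content of Shimura's explicit construction of simple type-III families; it is the principal external input the argument depends on.
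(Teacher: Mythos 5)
Your overall route is the one the paper (following Dang--Herrig) actually takes for its Main Theorem, of which this statement is the special case where $P$ is the Salem polynomial of $\lambda$: embed the degree-$g$ field $\mathbb{Q}(\lambda)$, a quadratic extension of the totally real field $\mathbb{Q}(\lambda+\lambda^{-1})$ of degree $g/2$, into a totally indefinite quaternion \emph{division} algebra over $\mathbb{Q}(\lambda+\lambda^{-1})$, invoke the Birkenhake--Lange Chapter 9.4 construction (Lemma \ref{Construction}) to obtain a $g$-dimensional simple abelian variety whose endomorphism ring contains an order containing $\lambda^{\pm1}$, and read off the eigenvalues of the analytic representation. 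Your production of $D$ by prescribing local invariants at two finite primes inert in $\mathbb{Q}(\lambda)/\mathbb{Q}(\lambda+\lambda^{-1})$ is a clean substitute for the explicit symbol $\left(\frac{a,p}{F}\right)$ of Theorem \ref{divisional}, and your place-by-place eigenvalue count replaces the fixed-point/reduced-norm computation of Section \ref{Calculation}; both are legitimate. (Terminology: a totally indefinite quaternion algebra is Albert Type II, the paper's Type 2; Type III is the totally definite case.)

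The one step that fails as written is the claim that \emph{right} multiplication by $\tau$ commutes with $J$. Since $J$ must commute with the left $D$-action in order that $D\subseteq\mathrm{End}_{\mathbb{Q}}(X)$, $J$ is right multiplication by some $j\in D\otimes_{\mathbb{Q}}\mathbb{R}$ with $j^2=-1$, and $R_\tau J=JR_\tau$ forces $j$ to centralize $\tau$, i.e.\ $j\in L\otimes_{\mathbb{Q}}\mathbb{R}$. But at the distinguished real place $\sigma_0$ the component $L\otimes_{K,\sigma_0}\mathbb{R}\cong\mathbb{R}\times\mathbb{R}$ (because $\lambda$ is real there, so $\tau_{\sigma_0}$ has distinct real eigenvalues and split centralizer) contains no square root of $-1$; hence no complex structure commuting with $R_\tau$ exists and $R_\tau$ is never an endomorphism of $X$. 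The automorphism you want is \emph{left} multiplication by $\tau$, i.e.\ the image of $\tau$ under $\mathcal{O}\hookrightarrow\mathrm{End}(X)$; this commutes with $J$ for free, and its analytic representation has exactly the eigenvalue list $\lambda,\lambda^{-1},\beta_1,\overline{\beta_1},\ldots$ that you wrote down, so the computation of $\lambda_k(f)$ on $\mathrm{H}^{k,k}$ and the conclusion are unaffected. The remaining external inputs you cite (polarizability of the family and genericity giving $\mathrm{End}_{\mathbb{Q}}(X)=D$ exactly, whence simplicity because $D$ is a division ring) are precisely what the paper imports as Lemma \ref{Construction}.
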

 This theorem concludes that there is an automorphism of some simple abelian variety which realizes the square of every Salem number, which is again a Salem number, as the first dynamical degree.\par
 The extension of Theorem \ref{DH22 theorem} is the main result of this paper as below.
 \begin{thma}
  Let $P(x)\in\mathbb{Z}[x]$ be an irreducible monic polynomial whose all roots are either real or of modulus $1$.
  Assume at least one root has modulus $1$.
  Let $g$ be the degree of the polynomial $P(x)\in\mathbb{Z}[x]$ and $z _1,z _2,\ldots,z_g$ be the roots of $P(x)$ which are ordered as $\abs{z_1}\geq\abs{z_2}\geq\cdots\geq\abs{z_g}$.\par
  Then, there is a simple abelian variety $X$ of dimension $g$ and an automorphism $f\colon X\longrightarrow X$ with dynamical degrees
  \begin{center}
   $\lambda_0(f)=1, \lambda_k(f)=\prod_{i=1}^{k} \abs{z_i}^2\,(1\leq k\leq g)$.
  \end{center}
 \end{thma}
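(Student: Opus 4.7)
\textit{Algebraic setup.} The plan is to generalize the totally indefinite quaternion multiplication construction of Theorem \ref{DH22 theorem}, which handles Salem numbers, to the polynomial of Theorem A. The case $g=1$ forces $P(x)=x\pm 1$ and $\lambda_k=1$ trivially, so I assume $g\ge 2$. First I would verify that $P$ is reciprocal and $g$ is even, and isolate a totally real subfield. For $g\ge 2$, irreducibility of $P$ excludes $\pm 1$ as a root, so any modulus-one root $\zeta$ is non-real and satisfies $\bar\zeta=1/\zeta$, hence $\zeta$ is an algebraic unit. Since $1/\zeta=\bar\zeta$ is a root of $P\in\mathbb{R}[x]$, the root set of $P$ is closed under $z\mapsto 1/z$ (propagating to all roots via Galois), so $x^g P(1/x)=\pm P(x)$ and $g$ is even. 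Setting $K:=\mathbb{Q}(\alpha)$ and $t:=\alpha+\alpha^{-1}\in K$, the element $t$ is real at every embedding of $K$, so $F:=\mathbb{Q}(t)\subset K$ is totally real of degree $g/2$, and $K=F(\alpha)$ satisfies $\alpha^2-t\alpha+1=0$.

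\textit{Abelian variety and automorphism.} Mirroring DH22, I would pick a totally indefinite quaternion division algebra $B$ over $F$ containing $K$ as an $F$-subalgebra --- arranged by ramifying $B$ at two finite places of $F$ inert in $K/F$, whose existence is ensured by Chebotarev --- and choose an order $\mathcal{O}\subset B$ with $\mathcal{O}_K\subset\mathcal{O}$. Since $B$ is totally indefinite, $B\otimes_\mathbb{Q}\mathbb{R}\cong M_2(\mathbb{R})^{g/2}$ has real dimension $2g$; I would pick $J\in B\otimes_\mathbb{Q}\mathbb{R}$ with $J^2=-1$, so that left multiplication $L_J$ turns $V:=B\otimes_\mathbb{Q}\mathbb{R}$ into a complex vector space of dimension $g$, and $X:=V/\mathcal{O}$ is a $g$-dimensional complex torus. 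A standard Riemann form $E$ (built from the reduced trace twisted by an auxiliary pure-quaternion element $\mu\in B$ with $\bar\mu=-\mu$) provides a polarization and upgrades $X$ to an abelian variety. Since $\alpha^{-1}=t-\alpha\in\mathcal{O}_K\subset\mathcal{O}$, $\alpha$ is a unit in $\mathcal{O}$, so right multiplication $R_\alpha$ is a $\mathbb{Z}$-automorphism of $\mathcal{O}$ commuting with $L_J$ by associativity; because the reduced norm satisfies $\mathrm{Nrd}(\alpha)=\alpha\alpha^{-1}=1$, $R_\alpha$ preserves $E$, and $f:=R_\alpha$ is a polarized automorphism of $X$.

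\textit{Dynamical degrees, simplicity, and the main obstacle.} To compute eigenvalues of the analytic representation of $f$, decompose $V=\bigoplus_\phi M_2(\mathbb{R})$ over the $g/2$ real places $\phi$ of $F$; at each $\phi$, right multiplication by the matrix image $\alpha_\phi\in M_2(\mathbb{R})$ acts on $(M_2(\mathbb{R}),L_J)\cong\mathbb{C}^2$ as a $\mathbb{C}$-linear endomorphism whose eigenvalues are the two roots of the characteristic polynomial $x^2-\phi(t)x+1$. Aggregating over all places, the $g$ analytic eigenvalues of $f$ are exactly the roots $z_1,\dots,z_g$ of $P(x)=\prod_\phi(x^2-\phi(t)x+1)$, and since $H^{k,k}(X)\cong\wedge^k V^\ast\otimes\wedge^k\overline{V}^\ast$ with $f^\ast$ having eigenvalues $\prod_{i\in I}z_i\prod_{j\in J}\overline{z_j}$ for $|I|=|J|=k$, the spectral radius of $f^\ast$ on $H^{k,k}(X)$ is $\prod_{i=1}^k|z_i|^2$, yielding $\lambda_k(f)=\prod_{i=1}^k|z_i|^2$. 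Simplicity of $X$ follows from $B$ being a division algebra together with the dimension match $\dim X=g=2[F:\mathbb{Q}]$ (Albert type II). The main obstacle I expect is the simultaneous arithmetic arrangement of the algebra $B$ and the element $\mu$: the ramification of $B$ must make $K\hookrightarrow B$ possible while keeping $B$ non-split and totally indefinite, and $\mu$ must make the Riemann form $E$ positive at every real place of $F$. This is more delicate than in the Salem setting because of the general signature of $K/F$ --- some real places of $F$ split in $K$ (contributing real roots of $P$), others are inert (contributing non-real unit-modulus roots) --- and these positivity conditions must be handled simultaneously at every archimedean place.
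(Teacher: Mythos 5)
Your skeleton matches the paper's: pass to $\gamma$ a modulus-one root, $K=\mathbb{Q}(\gamma)$ quadratic over the totally real $F=\mathbb{Q}(\gamma+\gamma^{-1})$ of degree $g/2$, embed $K$ into a totally indefinite quaternion division algebra $B$ over $F$, take the $g=2[F:\mathbb{Q}]$-dimensional Type II abelian variety with $\mathcal{O}_K\subset\mathrm{End}(X)$, let $\gamma$ act, and read off $\lambda_k$ from the analytic eigenvalues. Two of your choices are genuine variants: you obtain $B$ by ramifying at two finite primes of $F$ inert in $K$ via \v{C}ebotarev (the paper instead invokes a result of Dang--Herrig giving a divisional $\left(\frac{a,p}{F}\right)$ with $a=(\gamma-\gamma^{-1})^2$ and $p$ a well-chosen rational prime), and you propose to build the complex structure $J$ and Riemann form by hand rather than appeal to the construction of \cite[Ch.\ 9.4]{BL04} via Lemma~\ref{Construction}.

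The genuine gap is precisely the point you flag as ``the main obstacle I expect'': you do not produce the positive (Rosati-type) anti-involution on $B$ over $\mathbb{Q}$, equivalently the element making the Riemann form positive at every archimedean place of $F$ simultaneously. Without this, Lemma~\ref{Construction} (or your explicit version of the BL04 construction) does not apply, and the existence of the abelian variety is not established. This is not a routine verification: by Theorem~\ref{construction of positive anti-involution}, one must exhibit $c\in B\setminus F$ with $c^2\in F$ \emph{totally negative}, and because $P$ can have arbitrarily many real roots, the sign conditions at the corresponding real places fight against those at the places coming from the modulus-one roots. The paper's entire Lemma~\ref{construct positive anti-involution} is devoted to this: one writes $c=xi+yj+zij$, chooses $x,y,z$ in terms of powers $\gamma^n+\gamma^{-n}$, uses a simultaneous-approximation lemma (Lemma~\ref{analogous to Kronecker's Density Theorem}) to control the archimedean places on the unit circle, and then uses infinitely many solutions of a Pell equation $Z^2-pX^2=1$ to force all conjugates of $c^2$ negative. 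Your write-up correctly identifies that the signature of $K/F$ (split versus inert real places of $F$) is what makes this delicate, but merely naming the difficulty does not close it; the Pell/density argument (or an equivalent) is the irreducible technical core of the proof and is missing from your proposal.

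A smaller point: your simplicity claim needs $\mathrm{End}_\mathbb{Q}(X)\cong B$, not just an embedding $B\hookrightarrow\mathrm{End}_\mathbb{Q}(X)$; the paper gets this for free from Lemma~\ref{Construction}, whereas in your explicit construction it would require an extra step (that the endomorphism algebra is no larger than $B$, which follows from the Type II dimension bound $2e\mid g$ together with $g=2e$). Your eigenvalue computation and the identification of the characteristic polynomial of $\rho_r(\gamma)$ with $P(x)^2$ agree with the paper's Section~\ref{Calculation}.
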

 This theorem is proved in Section \ref{Main theorem} and it induces the next corollary in Section \ref{Corollaries}.
 \begin{corb}
  Any Salem number is realized as the first dynamical degree of an automorphism of a simple abelian variety over $\mathbb{C}$.
 \end{corb}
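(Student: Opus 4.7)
The plan is to invoke Theorem A on the minimal polynomial of $\sqrt{\mu}$, where $\mu$ is the given Salem number. Specifically, I would let $\alpha:=\sqrt{\mu}$ denote the positive real square root; since $\mu$ is an algebraic integer, so is $\alpha$, being a root of the monic integer polynomial $S(x^2)$ where $S(x)\in\mathbb{Z}[x]$ is the Salem polynomial of $\mu$. Let $P(x)\in\mathbb{Z}[x]$ denote the minimal polynomial of $\alpha$ over $\mathbb{Q}$; it is automatically monic and irreducible.

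Next, I would verify the hypotheses of Theorem A for $P$. Any Galois conjugate $\alpha'$ of $\alpha$ satisfies $(\alpha')^2=\mu'$ for some conjugate $\mu'$ of $\mu$. Since the conjugates of a Salem number are either real or of modulus $1$, the same holds for $\alpha'$. Moreover, since $\mu$ is the largest-modulus conjugate of itself, $\sqrt{\mu}$ is the largest-modulus root of $P$. Once these hypotheses are secured, Theorem A supplies a simple abelian variety $X$ of dimension $\deg P$ together with an automorphism $f\colon X\to X$ whose first dynamical degree equals $|\sqrt{\mu}|^2=\mu$, as desired.

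The main obstacle I anticipate is confirming that $P$ has at least one root of modulus $1$, which is essential to applying Theorem A. The argument naturally splits into two cases according to whether $\sqrt{\mu}$ lies in $\mathbb{Q}(\mu)$. If $\sqrt{\mu}\notin\mathbb{Q}(\mu)$, then a degree count forces $P=S(x^2)$, which contains $\pm\sqrt{z}$ as a root for every unit-circle conjugate $z$ of $\mu$, so unit-circle roots of $P$ are abundant. If instead $\sqrt{\mu}\in\mathbb{Q}(\mu)$, write $\sqrt{\mu}=\varphi(\mu)$ for some $\varphi\in\mathbb{Q}[x]$; then the full set of conjugates of $\sqrt{\mu}$ is $\{\varphi(\mu'):\mu'\text{ a conjugate of }\mu\}$, and for each unit-circle conjugate $z$ the value $\varphi(z)$ is one of $\pm\sqrt{z}$, hence still of modulus $1$. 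Since every Salem number has degree at least $4$ and thus at least one pair of unit-circle conjugates, in either case $P$ acquires modulus-$1$ roots, and the reduction to Theorem A goes through.
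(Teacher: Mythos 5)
Your proof is correct and follows the same overall reduction as the paper: apply Theorem A to the minimal polynomial $P$ of $\sqrt{\mu}$ and note that $\sqrt{\mu}$ is its largest-modulus root, so the resulting automorphism has $\lambda_1(f)=\left|\sqrt{\mu}\right|^2=\mu$. The only genuine difference is how you verify that $P$ has a root of modulus $1$. The paper isolates this as a separate lemma (Lemma \ref{Main lemma}), stated for any $n$-th root $\sqrt[n]{\lambda}$: it forms $R(x)=Q(x)Q(\zeta_n x)\cdots Q(\zeta_n^{n-1}x)$, shows $R(x)=S(x^n)$ for some $S\in\mathbb{Z}[x]$ divisible by the Salem polynomial, and deduces that $Q$ must pick up a unit-circle root. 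You instead argue by a direct Galois-theoretic dichotomy on whether $\sqrt{\mu}\in\mathbb{Q}(\mu)$: if not, a degree count forces $P(x)=S(x^2)$, which visibly has unit-circle roots; if so, the conjugates of $\sqrt{\mu}=\varphi(\mu)$ are exactly the $\varphi(\mu')$, each a square root of a conjugate $\mu'$ and hence of modulus $1$ whenever $|\mu'|=1$. Your version is more elementary and self-contained for the case $n=2$, and your dichotomy is precisely what the paper establishes later in Corollary \ref{realize Salem} when it shows $\deg P\in\{g,2g\}$; the paper's lemma buys the statement for all $n$-th roots at once, more than is needed for this corollary. One small point worth making explicit in your write-up: when a conjugate $\mu'$ of $\mu$ is real you need $\mu'>0$ (true here, since the real conjugates of a Salem number are $\mu$ and $1/\mu$) to conclude that its square roots are real rather than purely imaginary, so that every root of $P$ is indeed real or of modulus $1$.
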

 Section \ref{Small first dynamical degree} is devoted to exploring small first dynamical degrees except $1$ and the next theorem is proved in Section \ref{Small dynamical degree with restricted dimension}.
 (Probably, this fact is famous and the proof is trivial, but we could not found the reference.)
 \begin{thmc}
  For an integer $g\geq2$,
  \begin{align*}
   \mathcal{A}_g:=\left\{
                   \begin{array}{l}
                    \text{first dynamical degrees of surjective endomorphisms}\\
                    \text{  of abelian varieties over $\mathbb{C}$ whose dimension is $g$}
                   \end{array}
                  \right\}\backslash\{1\},\\
   \mathcal{B}_g:=\left\{
                   \begin{array}{l}
                    \text{first dynamical degrees of automorphisms}\\
                    \text{  of simple abelian varieties over $\mathbb{C}$ whose dimension is $g$}
                   \end{array}
                  \right\}\backslash\{1\}
  \end{align*} 
  have the minimum value.
 \end{thmc}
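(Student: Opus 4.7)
The plan is to reduce the statement to a finiteness fact about integer polynomials. For any abelian variety $X=V/\Lambda$ of dimension $g$ and any surjective endomorphism $f$, let $\rho_a(f)\colon V\to V$ denote the analytic representation, with eigenvalues $\alpha_1,\ldots,\alpha_g\in\mathbb{C}$. The rational representation $\rho_r(f)$ on $\Lambda$ satisfies $\rho_r(f)\otimes\mathbb{C}\cong\rho_a(f)\oplus\overline{\rho_a(f)}$, so its characteristic polynomial $Q_f(x)\in\mathbb{Z}[x]$ is monic of degree $2g$ with root multiset $\{\alpha_1,\bar\alpha_1,\ldots,\alpha_g,\bar\alpha_g\}$. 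Since $H^{1,1}(X)\cong V^{*}\otimes\overline{V^{*}}$ with $f^{*}$ acting by the corresponding tensor product, the first dynamical degree is $\lambda_1(f)=\max_{1\le i\le g}\abs{\alpha_i}^2$, i.e., the square of the maximum modulus of a root of $Q_f$.

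The key step will be the following finiteness claim: for every $M>1$, the set $\mathcal{A}_g\cap(1,M]$ is finite. Indeed, if $\lambda_1(f)\le M$, then every root of $Q_f$ has modulus at most $\sqrt{M}$, so each coefficient of $Q_f$, being an elementary symmetric function of the roots, is bounded in absolute value by $\binom{2g}{k}M^{k/2}$. Since $Q_f$ is a monic integer polynomial of degree $2g$, only finitely many such polynomials occur, and each determines $\lambda_1$. The same argument applies verbatim to $\mathcal{B}_g$.

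To conclude, it will suffice to exhibit any element $\lambda_0$ of each of $\mathcal{A}_g,\mathcal{B}_g$ and invoke the finiteness bound with $M=\lambda_0$, yielding a finite nonempty set of candidates for the minimum. For $\mathcal{A}_g$, the map $[2]_X$ on any abelian $g$-fold satisfies $\rho_a([2]_X)=2\cdot\mathrm{Id}$, hence $\lambda_1=4\in\mathcal{A}_g$. For $\mathcal{B}_g$, I would invoke a simple abelian $g$-fold with real multiplication by a totally real field $K$ of degree $g$ and take any non-torsion unit $u\in\mathcal{O}_K^{\times}$ (whose group has rank $g-1\ge 1$ by Dirichlet) as an automorphism; the corresponding dynamical degree equals $\max_i\sigma_i(u)^2>1$.

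The hardest part will be the nonemptiness of $\mathcal{B}_g$, since one must simultaneously produce a \emph{simple} abelian variety and an automorphism of infinite order for every $g\ge 2$. Simplicity of a very general RM abelian $g$-fold in the Hilbert modular variety attached to $\mathcal{O}_K$ is classical but uses moduli-theoretic input; alternatively, a careful CM construction with a primitive CM type would give a simple $g$-fold on which units of the CM field act with $\lambda_1>1$. Note that Theorem A alone does not directly supply such an element in every dimension (for instance, for $g=2$ it only yields cyclotomic polynomials with $\lambda_1=1$, and for $g=3$ no admissible irreducible $P(x)$ exists), so an auxiliary construction is indispensable.
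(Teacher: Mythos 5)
Your proposal is correct and takes essentially the same route as the paper: the core of both arguments is that the characteristic polynomial of $f_*$ on $\mathrm{H}_1(X)\simeq\mathbb{Z}^{2g}$ is a monic integer polynomial of degree $2g$ whose coefficients are bounded by $\binom{2g}{i}c^i$ once $\lambda_1(f)<c^2$, so only finitely many first dynamical degrees lie in $(1,c^2)$. You are more explicit than the paper about nonemptiness (the paper simply posits a $c$ for which the interval $(1,c^2)$ meets the set in question); your real-multiplication construction for $\mathcal{B}_g$ --- a simple $g$-dimensional abelian variety with $\mathrm{End}(X)\supset\mathcal{O}_K$ for $K$ totally real of degree $g$, acted on by a nontorsion unit --- is exactly what Lemma~\ref{Construction1} together with Dirichlet's unit theorem supplies, so the auxiliary moduli-theoretic input you worry about is already available in the paper.
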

 But, without restricting the dimension of (simple) abelian varieties, there does not exist the minimum value of the set of first dynamical degrees except $1$.
 This result is proved in Section \ref{First dynamical degree close to 1}.
 \begin{thmd}
  There does not exist the minimum value of 
  \begin{align*}
   \Delta:=\{\text{first dynamical degrees of automorphisms of simple abelian varieties over $\mathbb{C}$}\}\backslash\{1\}.
  \end{align*}
 \end{thmd}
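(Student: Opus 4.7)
The plan is to prove the stronger statement $\inf\Delta=1$; since $1\notin\Delta$ by the definition of $\Delta$, this immediately forces $\Delta$ to have no minimum. By Theorem A it suffices to exhibit a sequence of irreducible monic polynomials $P_n(x)\in\mathbb{Z}[x]$ satisfying the hypotheses of Theorem A (all roots real or of modulus $1$, with at least one of modulus $1$) such that the maximum modulus of a root of $P_n$ tends to $1$ from above; the corresponding first dynamical degrees (the squares of these maxima) will then lie in $\Delta$ and converge to $1$.

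To construct such $P_n$, I would use the classical substitution $y=x+1/x$. Starting from any monic irreducible totally real polynomial $R(y)\in\mathbb{Z}[y]$ of degree $d$ having at least one root in $(-2,2)$, define $P(x):=x^d R(x+1/x)$; one checks directly that $P$ is monic of degree $2d$, lies in $\mathbb{Z}[x]$, and has roots organized into reciprocal pairs corresponding to the roots $y_i$ of $R$ (a complex-conjugate pair on the unit circle when $y_i\in(-2,2)$, and a real pair $(\alpha,1/\alpha)$ when $\abs{y_i}>2$). The maximum modulus $\abs{z_1}$ of a root of $P$ satisfies $\abs{z_1}+\abs{z_1}^{-1}=\max\{\abs{y_i}:y_i\text{ a root of }R\text{ with }\abs{y_i}>2\}$. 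Moreover, picking a root $y_0\in(-2,2)$ of $R$ yields $\sqrt{y_0^2-4}\notin\mathbb{Q}(y_0)\subset\mathbb{R}$, so the corresponding root $\alpha$ of $P$ satisfies $[\mathbb{Q}(\alpha):\mathbb{Q}]=2d$ and $P$ is irreducible. Consequently $\abs{z_1}\to 1^{+}$ is equivalent to the largest absolute value of a root of $R$ outside $[-2,2]$ tending to $2^{+}$.

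The problem thereby reduces to the main obstacle of constructing, for every $\delta>0$, a monic irreducible totally real polynomial $R(y)\in\mathbb{Z}[y]$ having at least one root in $(-2,2)$ and at least one root in $(2,2+\delta)$. I would approach this by perturbing the minimal polynomial of $2\cos(2\pi/N)$ for a large integer $N$, whose roots $2\cos(2\pi k/N)$ (with $\gcd(k,N)=1$ and $0<k<N/2$) lie in $(-2,2)$ and accumulate at $2$ as $N\to\infty$. A suitable integer-coefficient perturbation (for instance multiplication by a linear factor $(y-a)$ with well-chosen integer $a$ together with the addition of a small integer constant) can be arranged, via Rouch\'e's theorem, to displace the extremal root just above $2$ while leaving the other roots inside $(-2,2)$. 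Verifying the irreducibility of this perturbed polynomial (for instance by reduction modulo a suitable prime) is the principal technical task; once achieved, the reductions above deliver the desired sequence in $\Delta$ converging to $1$.
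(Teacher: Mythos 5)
Your reduction to Theorem A is correct as far as it goes: the passage $R(y)\mapsto P(x)=x^dR(x+1/x)$, the irreducibility argument via $[\mathbb{Q}(\alpha):\mathbb{Q}(y_0)]=2$ for a root $y_0\in(-2,2)$, and the formula relating $\abs{z_1}$ to the largest $\abs{y_i}>2$ are all fine (and the reciprocity of $P$ automatically gives constant term $1$, so the resulting element is indeed a unit). But the statement you defer as ``the principal technical task'' is exactly the crux of the theorem, and your proposed method for it does not work as described. An integer-coefficient perturbation of $\Psi_N$ has size at least $1$ at every point, whereas Rouch\'e-type control requires the perturbation to be small relative to the polynomial on the relevant contours; near $y=2$ the roots $2\cos(2\pi k/N)$ cluster and $\Psi_N$ is small, so adding an integer constant or an integer linear factor will in general scatter several roots off the real line or far outside $[-2,2+\delta]$, not nudge one root into $(2,2+\delta)$. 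Worse, the most natural explicit family meeting your requirements --- $R$ with exactly one root above $2$ and the rest in $(-2,2)$ --- is precisely the minimal polynomial of $\lambda+\lambda^{-1}$ for a Salem number $\lambda$, and making its top root tend to $2^+$ is equivalent to finding Salem numbers tending to $1$, which the paper explicitly notes is an open problem (this is why $\inf\Delta=1$ cannot be deduced from Corollary B alone). So you would need conjugate sets with at least two reciprocal real pairs of small modulus, i.e.\ totally real algebraic integers filling $[-2,2+\delta]$ nontrivially; such things do exist, but their construction rests on genuinely nontrivial results (Robinson/Fekete--Szeg\H{o}-type theorems on conjugate sets in intervals of length greater than $4$ and their equidistribution), none of which you invoke or prove. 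As written, the proof has a hole exactly where the difficulty lies.

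For comparison, the paper avoids this arithmetic obstruction entirely by leaving the setting of Theorem A (Type 1/2 endomorphism algebras). It takes the units $\sqrt[p]{\zeta_p^k+\zeta_p^{-k}}$ --- whose conjugates are neither all real nor on the unit circle, so Theorem A is inapplicable --- and realizes them as automorphisms by constructing, via \v{C}ebotarev's density theorem and cyclic algebras, a degree-$p^2$ division algebra over the CM-field $\mathbb{Q}(\zeta_p)$ containing them (Type 4), yielding first dynamical degrees $\sqrt[p]{\abs{\zeta_p^{(p-1)/2}+\zeta_p^{-(p-1)/2}}^2}\in(1,\sqrt[p]{4})$, which tend to $1$. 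If you want to salvage your route, you must either import a theorem guaranteeing totally real conjugate sets in $[-2,2+\delta]$ with a point above $2$ and a point in $(-2,2)$, or switch to a construction of the paper's kind.
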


 \medskip\noindent
 {\bf Acknowledgements.} The author thanks Professor Keiji Oguiso for suggesting some of the problem of dynamical degrees and commenting to this paper.
 He also thanks Long Wang for pointing out an error in the proof at the seminar.

\section{Preliminaries}
 \subsection{Dynamical degrees}\label{Dynamical degree}
  Let $X$ be a $g$-dimensional compact K\"{a}hler manifold and let $f\colon X\dashrightarrow X$ be a dominant meromorphic map as in Section \ref{Introduction}.
  If $f$ is a holomorphic map, then
  \begin{align*}
   (f^n)^{*}=(f^{*})^n:\mathrm{H}^{k,k}(X)\rightarrow\mathrm{H}^{k,k}(X)
  \end{align*}
  holds for every $0\leq k\leq g$ and so the dynamical degrees of $f$ is calculated as
  \begin{align*}
   \lambda_k(f)=\lim_{n\to+\infty}||(f^n)^{*}:\mathrm{H}^{k,k}(X)\rightarrow\mathrm{H}^{k,k}(X)||^{\frac{1}{n}}&=\lim_{n\to+\infty}||(f^{*})^n:\mathrm{H}^{k,k}(X)\rightarrow\mathrm{H}^{k,k}(X)||^{\frac{1}{n}}\\
                                                                                                               &=\rho(f^{*}:\mathrm{H}^{k,k}(X)\rightarrow\mathrm{H}^{k,k}(X))
  \end{align*}
  where $\rho(f^{*})$ is the spectral radius of $f^{*}:\mathrm{H}^{k,k}(X)\rightarrow\mathrm{H}^{k,k}(X)$.
  Under this condition, $\lambda_g(f)$ is equal to the number of points in $f^{-1}(a)$ for a general point $a$ in $X$.
  Thus, if $f$ is an automorphism of a $g$-dimensional complex projective variety $X$, then $\lambda_g(f)=1$ (cf.\ \cite{DS17}).\par
  Especially, let $X$ be a $g$-dimensional abelian variety over $\mathbb{C}$ and write as $V/\Lambda$ where $V=\mathbb{C}^g$ and $\Lambda$ a $\mathbb{Z}$-lattice in $V$.
  In this paper, all abelian varieties are considered over $\mathbb{C}$ and we abbreviate the base field from here.
  An abelian variety $X$ is called \textit{simple} if it contains no abelian subvariety except $0$ and $X$.\par
  Let $f\colon X \longrightarrow X$ be a morphism of an abelian variety $X$.
  Then, $f$ can be decomposed as $f=t\circ f'$ where $t$ is a translation map and $f'\colon X \longrightarrow X$ is a morphism of $X$ such that $f'(0)=0$ and $f'(x+y)=f'(x)+f'(y)$ for any $x,y\in X$.
  A morphism of $X$ which preserves the group structure of $X$ is called \textit{endomorphism} in this paper.
  Also, an endomorphism of $X$ is called \textit{automorphism} if its inverse exists and also it is an endomorphism.
  For a translation map $t$ of a complex torus $X$, $t_*=\mathrm{id}:\mathrm{H}_1(X)\longrightarrow\mathrm{H}_1(X)$ holds on singular homology and this implies $(t\circ f')^*=f'^*\circ t^*=f'^*$ on $\mathrm{H}^{k,k}(X)$ and so 
  \begin{align*}
   \lambda_k(t\circ f')=\lambda_k(f').
  \end{align*}
  Thus, the calculation of the dynamical degrees of morphisms of $X$ can be reduced to that of endomorphisms of $X$ and so from here, we consider only the dynamical degrees of endomorphisms of $X$.\par
  Denote the set of endomorphisms of $X$ by $\mathrm{End}(X)$ with the natural ring structure, and is called the endomorphism ring of $X$.
  Also the endomorphism algebra of $X$ is defined as $\mathrm{End}_\mathbb{Q}(X):=\mathrm{End}(X)\otimes_\mathbb{Z} \mathbb{Q}$.
  If $X$ is a simple abelian variety, then the endomorphism algebra $\mathrm{End}_\mathbb{Q}(X)$ is a division algebra of finite dimension over $\mathbb{Q}$.
  \begin{thm}[{Poincar\'e's Complete Reducibility Theorem (\cite[Theorem 5.3.7]{BL04})}]
   Let $X$ be a complex abelian variety.
   Then there is an isogeny
   \begin{align*}
    X\rightarrow X_1^{n_1}\times\cdots\times X_r^{n_r},
   \end{align*}
   where each $X_i$ is a simple abelian variety and $n_i\in\mathbb{Z}_{>0}$.\par
   Moreover, $X_i$ and $n_i$ are unique up to isogenies and permutations.
  \end{thm}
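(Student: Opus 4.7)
The plan is to reduce the theorem to the single key lemma that for any abelian subvariety $Y\subset X$ there exists a complementary abelian subvariety $Z\subset X$ such that the addition map $Y\times Z\to X$ is an isogeny. Granted this, the existence part of the decomposition follows by induction on $\dim X$: if $X$ is simple there is nothing to prove, and otherwise we split off a proper nonzero abelian subvariety, apply induction to both factors, and then gather mutually isogenous simple factors to obtain the asserted form $X_1^{n_1}\times\cdots\times X_r^{n_r}$.

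To construct the complement $Z$, I would exploit the fact that an abelian variety is projective, so it admits a polarization $\phi_L\colon X\to\hat X$ coming from an ample line bundle $L$. Writing $i\colon Y\hookrightarrow X$ for the inclusion, the composition $\hat i\circ\phi_L\circ i\colon Y\to\hat Y$ is induced by the ample restriction $L|_Y$ and hence is itself an isogeny. Define $Z$ to be the connected component of the identity in $\ker(\hat i\circ\phi_L)\subset X$. A dimension count combined with the isogeny condition on $Y$ shows $\dim Y+\dim Z=\dim X$, while $Y\cap Z$ is finite because it lies inside $\ker(\hat i\circ\phi_L\circ i)$. Hence the addition map $Y\times Z\to X$ is surjective with finite kernel, i.e. an isogeny.

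For uniqueness I would argue in the isogeny category. Since $\mathrm{End}_{\mathbb{Q}}(X_i)$ is a division algebra whenever $X_i$ is simple, Schur's lemma applies: $\mathrm{Hom}(X_i,X_j')\otimes\mathbb{Q}$ is nonzero only when $X_i$ and $X_j'$ are isogenous, in which case it is a one-sided vector space of dimension one over the division algebra. Computing $\mathrm{Hom}(X,X_i)\otimes\mathbb{Q}$ from the two alleged decompositions and comparing ranks over $\mathrm{End}_{\mathbb{Q}}(X_i)$ then forces the multiplicities $n_i$ to agree and the isogeny classes of the $X_i$ to match up to a permutation of the index set.

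The main obstacle is the construction of the complement $Z$, which is where projectivity (equivalently, the existence of a polarization) is essential. For a general complex torus complete reducibility can fail, so any purely complex-analytic attempt is doomed; the positivity encoded in the Riemann form of the polarization is precisely what guarantees that $\hat i\circ\phi_L\circ i$ is an isogeny on $Y$ rather than a homomorphism with possibly positive-dimensional kernel, and thus secures the dimension count $\dim Y+\dim Z=\dim X$ that makes the addition map an isogeny.
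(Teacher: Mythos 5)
The paper does not prove this statement; it is quoted directly from \cite[Theorem~5.3.7]{BL04}. Your argument is correct and is essentially the standard proof given there: the complement of an abelian subvariety is constructed via a polarization (the positivity of the Riemann form guaranteeing that $\hat i\circ\phi_L\circ i=\phi_{L|_Y}$ is an isogeny), existence follows by induction, and uniqueness follows from Schur's lemma in the isogeny category.
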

  \begin{remark}[{cf.\ \cite[Corollary 5.3.8]{BL04}}]\label{Poincare remark}
   By using the fact that the endomorphism algebra of a simple abelian variety is divisional, the above isogeny induces
   \begin{align*}
    \mathrm{End}_\mathbb{Q}(X)\simeq \mathrm{M}_{n_1}(F_1)\oplus\cdots\oplus\mathrm{M}_{n_r}(F_r),
   \end{align*}
   where $F_i=\mathrm{End}_\mathbb{Q}(X_i)$ is a division ring.\par
   Moreover, for an abelian variety $X$, $\mathrm{End}_\mathbb{Q}(X)$ is divisional if and only if $X$ is a simple abelian variety.
  \end{remark}
  For an abelian variety $X=V/\Lambda$, $f\in\mathrm{End}(X)$ induces a natural analytic representation $\rho_a(f):V\longrightarrow V$ and a natural rational representation $\rho_r(f):\Lambda\longrightarrow\Lambda$.
  $\rho_a(f)$ is identified as an element of $\mathrm{M}_g(\mathbb{C})$ and $\rho_r(f)$ is identified as an element of $\rm{M}_{2g}(\mathbb{Z})$.\par
  Let $\rho_1, \rho_2, \ldots, \rho_{g}$ be the eigenvalues of $\rho_a(f)$. Then $\rho_1, \overline{\rho_1}, \ldots, \rho_{g}, \overline{\rho_{g}}$ are the eigenvalues of $\rho_r(f)$.\par
  Assume $f$ is an automorphism, or more generally, a surjective endomorphism.
  Then, $\rho_a(f)$ is an isomorphism and then by renumbering $\rho_1, \rho_2, \ldots, \rho_{g}$ as $\abs{\rho_1}\geq\abs{\rho_2}\geq\cdots\geq\abs{\rho_g}>0$, the $k$-th dynamical degree $\lambda_k(f)$ of $f$ is equal to $\prod_{i=1}^{k}\abs{\rho_i}^2$ (see e.g. \cite[Section 1.3]{DH22}).\par
  The following theorems are useful for considerations.
  \begin{theorem}[{\cite[Chapter 13.1]{BL04}}]\label{Fix1}
   Let $X$ be an abelian variety and let $f$ be an endomorphism of $X$.
   Then
   \begin{align*}
    \#{\mathrm{Fix}(f)}=\mleft|\mathrm{det}({\id}_V-\rho_a(f))\mright|^2=\mathrm{det}({\id}_\Lambda-\rho_r(f))
                       =\prod_{i=1}^{g}(1-\rho_{i})(1-\overline{\rho_{i}}).
   \end{align*}
  \end{theorem}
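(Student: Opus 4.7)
The plan is to identify the fixed-point set as the kernel of an endomorphism and then invoke the standard degree formula for isogenies of abelian varieties. The key observation is that $x\in\mathrm{Fix}(f)$ if and only if $(\id_X-f)(x)=0$, so $\mathrm{Fix}(f)=\ker(\id_X-f)$. Since $\rho_a$ and $\rho_r$ are ring homomorphisms from $\mathrm{End}(X)$ to $\mathrm{M}_g(\mathbb{C})$ and $\mathrm{M}_{2g}(\mathbb{Z})$ respectively, one immediately has $\rho_a(\id_X-f)=\id_V-\rho_a(f)$ and $\rho_r(\id_X-f)=\id_\Lambda-\rho_r(f)$.

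The main step is to treat the generic case $\det(\id_V-\rho_a(f))\neq 0$, so that $\id_X-f$ is an isogeny. I would invoke the classical formula (cf.\ \cite{BL04}) relating the degree of an isogeny to its analytic and rational representations, namely $\deg(g)=\abs{\det\rho_a(g)}^2=\det\rho_r(g)$ for any isogeny $g$; applied to $g=\id_X-f$ this yields
\begin{align*}
 \#\ker(\id_X-f)=\deg(\id_X-f)=\abs{\det(\id_V-\rho_a(f))}^2=\det(\id_\Lambda-\rho_r(f)),
\end{align*}
which is the first pair of equalities. If instead the determinant vanishes, then $\id_X-f$ has a positive-dimensional kernel, so $\mathrm{Fix}(f)$ is infinite while both right-hand sides vanish, and the identity still holds under the evident convention.

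For the final equality, let $\rho_1,\ldots,\rho_g$ be the eigenvalues of $\rho_a(f)$. Then $\id_V-\rho_a(f)$ has eigenvalues $1-\rho_i$, so $\det(\id_V-\rho_a(f))=\prod_{i=1}^{g}(1-\rho_i)$. Taking the squared modulus and using $\overline{1-\rho_i}=1-\overline{\rho_i}$ gives
\begin{align*}
 \abs{\det(\id_V-\rho_a(f))}^2=\prod_{i=1}^{g}(1-\rho_i)(1-\overline{\rho_i}),
\end{align*}
consistent with the fact, recalled earlier in the excerpt, that the eigenvalues of $\rho_r(f)$ are precisely $\rho_1,\overline{\rho_1},\ldots,\rho_g,\overline{\rho_g}$.

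There is no serious obstacle: the entire content reduces to the standard degree formula for isogenies. Should one wish to reprove it, the second equality $\deg(g)=\det\rho_r(g)$ follows from $\deg(g)=[\Lambda:\rho_r(g)(\Lambda)]$ together with the orientation-preserving character of $\rho_r(g)$ acting on $V\cong\mathbb{R}^{2g}$, while $\det\rho_r(g)=\abs{\det\rho_a(g)}^2$ is the general fact that the real determinant of a $\mathbb{C}$-linear operator equals the squared modulus of its complex determinant. I would simply cite \cite{BL04} for both.
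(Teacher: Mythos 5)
Your proof is correct and is essentially the standard argument from the cited reference \cite[Chapter 13.1]{BL04}: identify $\mathrm{Fix}(f)=\ker(\id_X-f)$, apply the degree formula $\deg(g)=\abs{\det\rho_a(g)}^2=\det\rho_r(g)$ to the isogeny $g=\id_X-f$, and dispose of the degenerate case via the convention $\#\mathrm{Fix}(f)=0$ when $\dim\mathrm{Fix}(f)>0$. The paper itself gives no proof, only the citation, so there is nothing further to compare.
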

  \begin{remark}[{cf.\ \cite[Chapter 13.1]{BL04}}]
   Here, $\#{\mathrm{Fix}(f)}$ is defined for a holomorphic map $f$ of a complex torus $X$ as 
  \begin{align*}
    \#{\mathrm{Fix}(f)}:=
     \begin{cases}
      \text{cardinarity of }\mathrm{Fix}(f) & \mathrm{dim}(\mathrm{Fix}(f))=0, \\
      0                                     & \mathrm{dim}(\mathrm{Fix}(f))>0,
     \end{cases}
   \end{align*}
   where $\mathrm{Fix}(f)$ is the analytic subvariety of $X$ consisting of the fixed points of $f$.
   $\#{\mathrm{Fix}(f)}$ is invariant under translation maps (cf.\ \cite[Lemma 13.1.1]{BL04}).\par
   For $f\in\mathrm{End}(X)$, if $\mathrm{Fix}(f)$ has a positve dimension, then some eigenvalue of $\rho_r(f)$ is equal to $1$ and so $\#{\mathrm{Fix}(f)}=0$.
  \end{remark}
  Assume $X$ is a simple abelian variety so that the endomorphism algebra $B:=\mathrm{End}_\mathbb{Q}(X)$ is a division ring of finite dimension over $\mathbb{Q}$ as above.
  The division ring $B$ has a field $F$ as a center with $\left[B\colon F\right]=d^2$ (cf.\ \cite[\S5]{Dra83}), $\left[F\colon \mathbb{Q}\right]=e$.
  Under this notation, the next theorem holds.
  \begin{theorem}[{\cite[Chapter 13.1]{BL04}}]\label{Fix2}
   Identify $f\in\mathrm{End}(X)$ as an element of $B:=\mathrm{End}_\mathbb{Q}(X)$ for a simple abelian variety $X$.
   Then,
   \begin{align*}
    \#{\mathrm{Fix}(f)}=\mathrm{Nrd}_{B/\mathbb{Q}}({\id}_X-f)^{\frac{2g}{de}},
   \end{align*}
   where $\mathrm{Nrd}_{B/\mathbb{Q}}:B\rightarrow \mathbb{Q}$ is the reduced norm map for central simple algebras.
  \end{theorem}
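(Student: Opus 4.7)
The plan is to deduce the statement from Theorem \ref{Fix1} together with a structural calculation of the rational representation $\rho_r$ viewed as a representation of the division algebra $B=\mathrm{End}_\mathbb{Q}(X)$. By Theorem \ref{Fix1} we have $\#\mathrm{Fix}(f)=\det_\mathbb{Q}(\mathrm{id}_\Lambda-\rho_r(f))$, so it suffices to establish the polynomial identity
\begin{equation*}
\det\nolimits_\mathbb{Q}(\mathrm{id}_\Lambda-\rho_r(f))=\mathrm{Nrd}_{B/\mathbb{Q}}(\mathrm{id}_X-f)^{2g/(de)}
\end{equation*}
for $f\in B$.

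First I would analyze the $B$-module structure on $\Lambda_\mathbb{Q}:=\Lambda\otimes_\mathbb{Z}\mathbb{Q}$. Since $X$ is simple, $B$ is a division algebra with $[B:\mathbb{Q}]=d^2e$, and $\Lambda_\mathbb{Q}$ is a finitely generated left $B$-module of $\mathbb{Q}$-dimension $2g$. Over a division ring every finitely generated module is free, so $\Lambda_\mathbb{Q}\cong B^{m}$ as a left $B$-module, with $m=2g/(d^2e)$. Consequently, $\rho_r$ is equivalent as a $\mathbb{Q}$-linear representation of $B$ to $m$ copies of the left regular representation of $B$ on itself, and for every $b\in B$,
\begin{equation*}
\det\nolimits_\mathbb{Q}(b\mid\Lambda_\mathbb{Q})=\det\nolimits_\mathbb{Q}(b\mid B)^{m}.
\end{equation*}

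Next I would invoke the standard identity for central simple algebras: the determinant of left multiplication by $b$ on $B$ equals $\mathrm{Nrd}_{B/\mathbb{Q}}(b)^{d}$, where by definition $\mathrm{Nrd}_{B/\mathbb{Q}}=N_{F/\mathbb{Q}}\circ\mathrm{Nrd}_{B/F}$. To prove this one extends scalars to a splitting field $K$ for $B$, so that $B\otimes_\mathbb{Q}K\cong\prod_{\sigma\colon F\hookrightarrow K}\mathrm{M}_d(K)$; on each factor the left regular representation decomposes into $d$ copies of the standard $d$-dimensional representation, whose determinant is precisely the reduced norm on $\mathrm{M}_d(K)$. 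Taking the product over the $e$ embeddings $\sigma$ yields $N_{F/\mathbb{Q}}\circ\mathrm{Nrd}_{B/F}$ on the reduced-norm side, and the exponent $d$ on the other.

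Combining these two ingredients gives $\det_\mathbb{Q}(\mathrm{id}_\Lambda-\rho_r(f))=\mathrm{Nrd}_{B/\mathbb{Q}}(\mathrm{id}_X-f)^{md}$, and $md=2g/(de)$ is the required exponent. The main obstacle is the second step: the identification of $\det_\mathbb{Q}(-\mid B)$ with a power of the reduced norm is standard, but one must keep careful track of the two powers appearing (the $d$ coming from matrix blocks over a splitting field and the $e$ coming from Galois conjugates of $F$) in order to land on the exponent $2g/(de)$ rather than something off by a factor. Once that identity is established, the result is immediate from Theorem \ref{Fix1} and the freeness of $\Lambda_\mathbb{Q}$ over $B$.
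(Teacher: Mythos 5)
Your proof is correct, and since the paper simply cites this statement from Birkenhake--Lange without reproducing the argument, what you have written is a legitimate self-contained derivation. The two ingredients you identify are exactly the right ones: (1) simplicity of $X$ makes $B$ a division ring, so $\Lambda_\mathbb{Q}\cong B^{m}$ with $m=2g/(d^2e)$; and (2) for any central simple $F$-algebra $B$ with $[B:F]=d^2$ and $[F:\mathbb{Q}]=e$, the $\mathbb{Q}$-linear determinant of left multiplication on $B$ is $\mathrm{Nrd}_{B/\mathbb{Q}}(\,\cdot\,)^{d}$, as one sees by extending scalars to a field splitting $B$ and containing the Galois closure of $F$, where the left regular representation of each $\mathrm{M}_d$-block is $d$ copies of the standard one and the product over the $e$ embeddings of $F$ supplies the field norm $N_{F/\mathbb{Q}}$. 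Combined with Theorem~\ref{Fix1} and $\rho_r(\mathrm{id}_X-f)=\mathrm{id}_\Lambda-\rho_r(f)$, this gives exponent $md=2g/(de)$ as required. This is essentially the argument in \cite[Chapter 13.1]{BL04}, so nothing is gained or lost relative to the cited source; the degree count (both sides are homogeneous of degree $d^2e$ in $b$) confirms the bookkeeping of the two factors of $d$ and $e$ that you flagged as the delicate point.
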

  \begin{remark}\label{finite-dimensional central simple algebra}
   For a finite-dimensional central simple algebra $B$ over a field $K$, the reduced norm $\mathrm{Nrd}_{B/K}$ 
   is calculated as below (cf.\ \cite[\S 22]{Dra83}).
   There exists a finite field extension $K'\supset K$ with $B\otimes_K K'\simeq\mathrm{M}_n(K')$ and fix an isomorphism $\phi\colon B\otimes_K K'\rightarrow\mathrm{M}_n(K')$. 
   Then $\mathrm{Nrd}_{B/K}:B\rightarrow K$ 
   is defined as
   \begin{align*}
    \mathrm{Nrd}_{B/K}(a):=\mathrm{det}(\phi(a\otimes1))\ (a\in B),
   \end{align*}
   and this is independent of the field extension of $K$ and the choice of the isomorphism.\par
   Also, for a finite field extension $K\supset k$, the reduced norm $\mathrm{Nrd}_{B/k}$ 
   is defined as 
   \begin{align*}
    \mathrm{Nrd}_{B/k}(a):=\mathrm{N}_{K/k}(\mathrm{Nrd}_{B/K}(a))\ (a\in B),
   \end{align*}
   where $\mathrm{N}_{K/k}$ is the field norm for the field extension $K\supset k$ by considering $K$ as a vector space over $k$.
   The details of reduced norms are explained in Section \ref{Algebraic preparation}.
  \end{remark}

 \subsection{Algebraic preparations}\label{Algebraic preparation}
  This subsection is devoted to the notations and the properties which are used later from Section \ref{Endomorphism of simple abelian variety}.
  \begin{flushleft}{\bf{Salem numbers, Pisot numbers}}\end{flushleft}
   \begin{definition}[{\cite[Chapter 5.2]{BDGGH+92}}]\label{Salem Pisot}
    A Salem number is a real algebriac integer $\lambda$ greater than $1$ where its other conjugates have modulus at most equal to $1$, at least one having a modulus equal to $1$.
    A Pisot number is a real algebraic integer $\lambda$ greater than $1$ where its all other conjugates have modulus less than $1$.
   \end{definition}\par
   For a Salem number $\lambda$, the minimal polynomial $P(x)\in\mathbb{Z}[x]$ of $\lambda$ is called the Salem polynomial of $\lambda$.
   By some deductions, $\frac{1}{\lambda}$ is the only root of $P(x)$ whose modulus is less than $1$ and the set of the roots of $P(x)$ can be written as $\mleft\{\lambda,\frac{1}{\lambda}\mright\}\cup\{z_1,\overline{z_1},\ldots z_k,\overline{z_k}\}$ where the $z_i$ are all of modulus $1$ (cf.\ \cite[p.84]{BDGGH+92}).\par
   Thus, the definition of Salem numbers can be rewritten as below.
   \begin{definition}\label{Definition of Salem}
    A Salem number is a real algebraic integer $\lambda>1$ of degree at least $4$ such that its minimal polynomial $P(x)$ has $\lambda$, $\frac{1}{\lambda}$ as its roots and all other roots have modulus $1$.
   \end{definition}
   \begin{remark}
    Let $\lambda$ be a Salem number and let $g$ be the degree of $\lambda$.
    From above, $g$ is always even, and so the degree of a Salem number is always even.\par
    Also by this defintion, powers of Salem numbers are again Salem numbers.
   \end{remark}

  \begin{flushleft}{\bf{CM-fields}}\end{flushleft}\par
   A number field $K$ is called \textit{totally real} if the image of every $\mathbb{Q}$-embedding $\sigma:K\hookrightarrow\mathbb{C}$ is inside $\mathbb{R}$.
   Also, a number field $K$ is called \textit{totally complex} if the image of every $\mathbb{Q}$-embedding $\sigma:K\hookrightarrow\mathbb{C}$ is not inside $\mathbb{R}$.
   \begin{definition}
    A CM-field is a number field $K$ which satisfies the following conditions.
    \begin{enumerate}
     \item $K$ is a totally complex field.
     \item $K$ is a quadratic extension of some totally real number field.
    \end{enumerate}
   \end{definition}

  \begin{flushleft}{\bf{Quaternion algebras}}\end{flushleft}\par
   For a field $F$ whose characteristic is not $2$, a quaternion algebra over $F$ is defined as an algebra $B$ over $F$ which has a basis $1,i,j,ij$ over $F$ with $i^2=a, j^2=b, ij=-ji$ ($a, b\in F^{\times}$).
   Under this condition, $B$ is written as $\left(\frac{a,b}{F}\right)$ and $1,i,j,ij$ is called an $F$-basis for $B$ (cf.\ \cite[Chapter 2]{Voi21}).\par
   There are some properties for quaternion algebras.
   \begin{itemize}
    \item $\left(\frac{a,b}{F}\right)\simeq\left(\frac{b,a}{F}\right)$
    \item $\left(\frac{a,b}{F}\right)\simeq\left(\frac{aa',b}{F}\right)$ ($a'\in F^{\times2}$)
    \item For a field extension $F\subset K$, $\left(\frac{a,b}{F}\right)\otimes_F K\simeq\left(\frac{a,b}{K}\right)$.
    \item When $F=\mathbb{R}$, $\left(\frac{1,1}{\mathbb{R}}\right)\simeq\left(\frac{1,-1}{\mathbb{R}}\right)\simeq\mathrm{M}_2(\mathbb{R})$ and $\left(\frac{-1,-1}{\mathbb{R}}\right)\simeq\mathbb{H}$.
   \end{itemize}\par
   Let $B$ be a quaternion algebra over a totally real number field $F$.
   $B$ is called \textit{totally indefinite} if $B\otimes_\sigma \mathbb{R}\simeq\mathrm{M}_2(\mathbb{R})$ for every embedding $\sigma\colon F\hookrightarrow\mathbb{R}$.
   On the other hand, $B$ is called \textit{totally definite} if $B\otimes_\sigma\mathbb{R}\simeq\mathbb{H}$ for every embedding $\sigma\colon F\hookrightarrow\mathbb{R}$.
   By the above properties, $B=\left(\frac{a,b}{F}\right)$ is totally indefinite if and only if either $\sigma(a)>0$ or $\sigma(b)>0$ holds for any $\mathbb{Q}$-embedding $\sigma\colon F\hookrightarrow\mathbb{C}$.\par
   Especially, if $a\in\mathbb{Q}_{>0}$, then $B=\left(\frac{a,b}{F}\right)$ is totally indefinite.

  \begin{flushleft}{\bf{Anti-involutions}}\end{flushleft}
   \begin{definition}[{\cite[Definition 3.2.1]{Voi21}}]
    For a field $F$ and an algebra $B$ over $F$, an $F$-linear map $\phi:B\rightarrow B$ which satisfies the following conditions is called an \textit{anti-involution} over $F$. 
    \begin{enumerate}
     \item $\phi(1)=1$
     \item $\phi(\phi(x))=x$ ($x\in B$)
     \item $\phi(xy)=\phi(y)\phi(x)$ ($x, y\in B$)
   \end{enumerate}
   \end{definition}
   \begin{definition}[{cf.\ \cite[Definition 8.4.1]{Voi21}}]
    For a subfield $F\subset\mathbb{R}$ and a finite-dimensional algebra $B$ over $F$, an anti-involution $\phi:B\rightarrow B$ over $F$ is called \textit{positive} over $F$ if $\mathrm{Tr}(\phi(x)x)>0$ for any nonzero $x\in B$.
    Here, for an element $a\in B$, $\mathrm{Tr}(a)$ is defined as the trace of the left multiplication map $a:B\rightarrow B$ where $B$ is considered as a vector space over $F$.
   \end{definition}
   \begin{remark}
    Let $B$ be an algebra over a number field $F$ and $\phi:B\rightarrow B$ be an anti-involution over $F$.
    Then, $\phi:B\rightarrow B$ is also an anti-involution over $\mathbb{Q}$.
    But, a positive anti-involution over $F$ is not always a positive anti-involution over $\mathbb{Q}$.
    In this paper, positivity of anti-involutions is always considered over $\mathbb{Q}$.
   \end{remark}
   \begin{example}\label{anti-involution}
    For a quaternion algebra $B=\mleft(\frac{a,b}{F}\mright)$ with an $F$-basis $1,i,j,ij$, the quaternion conjugate on $B$ is the map 
    \begin{align*}
     x_1+x_2 i+x_3 j+x_4 ij\longmapsto x_1-x_2 i-x_3 j-x_4 ij\ (x_i \in F)
    \end{align*}
    which is an anti-involution over $F$ and denote this map by $x\mapsto \overline{x}$ (\cite[Chapter 3.2]{Voi21}).
   \end{example}\par
   Let $B$ be a central simple algebra over a field $K$ and $\sigma$ be an anti-involution on $B$ over some field, which may be other than $K$.
   For $a\in K$, $\sigma(a)$ is an element of $K$ since for all $b\in B$,
   \begin{align*}
    \sigma(a)b=\sigma(\sigma^{-1}(b)a)=\sigma(a\sigma^{-1}(b))=b\sigma(a),
   \end{align*}
   and so $\sigma$ can be restricted to $K$.
   \begin{definition}\label{second kind}
    Let $B$ be a central simple algebra over a field $K$.\par
    An anti-involution $\sigma$ on $B$ is called \textit{first kind} if $\sigma$ fixes $K$ pointewise.
    Otherwise, $\sigma$ is called \textit{second kind}.
   \end{definition}
   \begin{theorem}[{cf.\ \cite[Theorem 5.5.3]{BL04}}]\label{construction of positive anti-involution}
    Let $B$ be a totally indefinite quaternion algebra of finite dimension over $\mathbb{Q}$ with center a totally real number field $K$.
    Assume $B$ is divisional.
    Then, a positive anti-involution $\phi:B\rightarrow B$ over $\mathbb{Q}$ can be written as
    \begin{align*}
     \phi(x)=c^{-1}\overline{x}c  
    \end{align*}
    where $c\in B\backslash K$ with $c^2\in K$ totally negative (i.e., the conjugates are all real and negative). 
   \end{theorem}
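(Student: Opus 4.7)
The plan is to reduce an arbitrary positive anti-involution $\phi$ to a twist of the standard quaternion conjugation $x\mapsto\overline{x}$ by an inner automorphism, and then interpret the positivity condition geometrically in $B\otimes_{\mathbb{Q}}\mathbb{R}$. The first step is to argue that $\phi$ must be of first kind, i.e.\ $\phi|_{K}=\mathrm{id}_{K}$. Its restriction to the center $K$ is an involution, and if it were non-trivial its $\pm 1$-eigenspaces would decompose $B$ into two $K^{+}$-stable pieces on which the trace form has a prescribed structure; either a direct computation on these eigenspaces or an appeal to Albert's classification of positive anti-involutions on quaternion algebras over number fields rules out the second-kind case when $B$ is totally indefinite over a totally real $K$.

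Once $\phi$ is known to be of first kind, the composition $\phi\circ(\,\overline{\cdot}\,)$ is a $K$-algebra automorphism of the central simple algebra $B$, hence inner by Skolem--Noether: there exists $c\in B^{\times}$ with $\phi(\overline{x})=c^{-1}xc$, equivalently $\phi(x)=c^{-1}\overline{x}c$. The identity $\phi^{2}=\mathrm{id}$ then forces $c^{-1}\overline{c}$ to commute with every $x\in B$, hence $c^{-1}\overline{c}\in Z(B)=K$. Writing $\overline{c}=\alpha c$ with $\alpha\in K$ and applying $\overline{\cdot}$ once more yields $\alpha^{2}=1$, so $\alpha=\pm 1$. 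If $\alpha=1$ then $c\in K$ and $\phi$ is the standard conjugation itself; I would exclude this by noting that $\mathrm{Tr}_{B/\mathbb{Q}}(\overline{x}x)$ is, up to a positive factor, $\mathrm{Tr}_{K/\mathbb{Q}}(\mathrm{Nrd}(x))$, and $\mathrm{Nrd}$ is indefinite at every real place of $K$ when $B$ is totally indefinite, so this trace form cannot be positive definite. Hence $\alpha=-1$, $\overline{c}=-c$, and $c$ is a pure quaternion with $c^{2}=-\mathrm{Nrd}(c)\in K$.

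To pin down the sign of $c^{2}$, I would pass to the archimedean place. Total indefiniteness of $B$ gives $B\otimes_{\mathbb{Q}}\mathbb{R}\cong\bigoplus_{\sigma}M_{2}(\mathbb{R})$, the sum running over the real embeddings $\sigma\colon K\hookrightarrow\mathbb{R}$. In each factor the quaternion conjugation becomes the matrix adjugate $X\mapsto\det(X)X^{-1}$ and $c$ becomes a trace-zero matrix $C_{\sigma}$ with $C_{\sigma}^{2}=\sigma(c^{2})I$. A direct $2\times 2$ computation shows that the local trace form $X\mapsto\mathrm{Tr}(C_{\sigma}^{-1}\mathrm{adj}(X)C_{\sigma}X)$ on $M_{2}(\mathbb{R})$ is positive definite iff $\sigma(c^{2})<0$; after an $\mathbb{R}$-linear change of coordinates one may take $C_{\sigma}$ of the form $\sqrt{|\sigma(c^{2})|}\,J$ with $J^{2}=\mathrm{sign}(\sigma(c^{2}))\,I$, reducing to the two model cases where $\phi$ is either the transpose (Frobenius form, positive) or a twisted symmetric involution (indefinite). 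Summing the local contributions, $\mathrm{Tr}_{B/\mathbb{Q}}(\phi(x)x)$ is positive definite on $B$ iff $\sigma(c^{2})<0$ for every real embedding $\sigma$ of $K$, that is, iff $c^{2}$ is totally negative.

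The main obstacle I anticipate is the first step: ruling out second-kind positive anti-involutions is where both total realness of $K$ and total indefiniteness of $B$ must genuinely enter, and the clean statement hides a small but essential archimedean argument. Once first-kindness is secured the remainder is standard: Skolem--Noether controls the shape of $\phi$, elementary algebra gives the two-case split, and the short $2\times 2$ matrix analysis settles the positivity criterion.
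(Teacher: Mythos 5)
The paper does not actually prove this statement: it is imported verbatim from Birkenhake--Lange \cite[Theorem 5.5.3]{BL04} and used as a black box, so there is no internal proof to compare against. Your proposal reconstructs what is essentially the standard argument behind that reference, and it is correct. The one place where you hedge --- ruling out the second kind --- admits a one-line computation using only total realness of $K$: if $\phi|_K\neq\mathrm{id}_K$, pick a nonzero $x$ in the $(-1)$-eigenspace of $\phi|_K$ acting on $K$ itself (no need to decompose all of $B$); then $\phi(x)x=-x^2$ and $\mathrm{Tr}_{B/\mathbb{Q}}(\phi(x)x)=-4\,\mathrm{Tr}_{K/\mathbb{Q}}(x^2)=-4\sum_{\sigma}\sigma(x)^2<0$, contradicting positivity. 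So, contrary to your closing remark, total indefiniteness is not needed for first-kindness; it enters only when you exclude $\overline{c}=c$ (via indefiniteness of the reduced norm form at every real place, plus density of $B$ in $B\otimes_{\mathbb{Q}}\mathbb{R}$ to produce a rational vector of negative trace) and in the final place-by-place sign analysis. The remaining steps --- Skolem--Noether applied to $x\mapsto\phi(\overline{x})$, the dichotomy $\overline{c}=\pm c$ forced by $\phi^2=\mathrm{id}$, the conclusion that $c$ is a pure quaternion with $c^2=-\mathrm{Nrd}(c)\in K^{\times}$ (hence $c\notin K$ and no conjugate of $c^2$ vanishes), and the $2\times2$ computation identifying the local form with $\mathrm{Tr}(X^{T}X)$ when $\sigma(c^2)<0$ and with an indefinite form when $\sigma(c^2)>0$ --- are all sound.
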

   \begin{theorem}[{cf.\ \cite[Chapter 21]{Mum70}, \cite[Theorem 5.5.6]{BL04}}]\label{existence of positive anti-involution}
    Let $B$ be a division algebra of finite dimension over $\mathbb{Q}$ with center a CM-field $K$.\par
    Assume that $B$ admits an anti-involution $x\mapsto\tilde{x}$ of the second kind.
    Then there exists a positive anti-involution $x\mapsto x'$ of the second kind.
   \end{theorem}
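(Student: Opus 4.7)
The plan is to produce the sought positive anti-involution in the form $\phi(x) = a^{-1} \tilde x a$ for some $a \in B$ satisfying $\tilde a = a$. For any such $a$, $\phi$ is a well-defined anti-involution, and because elements of the center $K$ commute with $a$, one has $\phi|_K = \tilde{\cdot}|_K$, which is nontrivial; hence $\phi$ is of the second kind. I assume throughout that $\tilde{\cdot}|_K$ is the complex conjugation $c$ of the CM-field $K$; this is necessary because any other nontrivial $\mathbb{Q}$-involution of $K$ has non-totally-real fixed field and produces an indefinite trace form on $K$. The entire task thus reduces to choosing $a$ so that $\mathrm{Tr}_{B/\mathbb{Q}}(\phi(x)\,x) > 0$ for every nonzero $x \in B$.

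I would pass to $B_\mathbb{R} := B \otimes_\mathbb{Q} \mathbb{R}$. Since $K$ is a CM-field with maximal totally real subfield $K_0$, one has $K \otimes_\mathbb{Q} \mathbb{R} \cong \prod_\tau \mathbb{C}_\tau$, one factor per real embedding of $K_0$, and accordingly $B_\mathbb{R} \cong \prod_\tau M_d(\mathbb{C})$ with $d^2 = \dim_K B$ (each factor being central simple over $\mathbb{C}$). Because $\tilde{\cdot}|_K$ is complex conjugation and therefore $K_0$-linear, the $\mathbb{R}$-extension of $\tilde{\cdot}$ preserves each factor of this product and acts as complex conjugation on each center. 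By the Skolem--Noether classification of second-kind anti-involutions on $M_d(\mathbb{C})$, on each factor the induced anti-involution has the form $X \mapsto H_\tau^{-1} X^* H_\tau$ for some invertible Hermitian matrix $H_\tau$.

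In this description the condition $\tilde a = a$ translates factor by factor into the statement that each $H_\tau a_\tau$ is Hermitian, and a direct computation gives
\begin{align*}
 \mathrm{Tr}_{B/\mathbb{Q}}(\phi(x)\,x) = 2d \sum_\tau \mathrm{tr}_\mathbb{C}\!\left( (H_\tau a_\tau)^{-1} x_\tau^* (H_\tau a_\tau) x_\tau \right).
\end{align*}
Writing $H_\tau a_\tau = Q_\tau^2$ with $Q_\tau$ Hermitian positive definite (where this is possible), each summand equals the squared Hilbert--Schmidt norm $\|Q_\tau x_\tau Q_\tau^{-1}\|^2$, which is non-negative and vanishes only when $x_\tau = 0$. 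Hence $\phi$ is positive if and only if every $H_\tau a_\tau$ is positive definite.

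To produce such an $a$, I would decompose $B = B^+ \oplus B^-$ into the $\pm 1$-eigenspaces of $\tilde{\cdot}$, so that $B^+$ is a $\mathbb{Q}$-form of the symmetric $\mathbb{R}$-subspace $(B_\mathbb{R})^+ := B^+ \otimes_\mathbb{Q} \mathbb{R}$ and is in particular dense in it. The condition that every $H_\tau a_\tau$ be positive definite cuts out an open subset of $(B_\mathbb{R})^+$, which is nonempty because $a_\tau := H_\tau$ is symmetric and yields $H_\tau a_\tau = H_\tau^2 > 0$ on every factor. Density then produces an element $a \in B^+$ satisfying this open condition; such an $a$ is nonzero (each $a_\tau$ is invertible), hence a unit in the division algebra $B$, and the resulting $\phi$ is the required positive anti-involution of the second kind. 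I expect the main conceptual step to be the identification of positivity of the $\mathbb{Q}$-trace form with positive-definiteness of the matrices $H_\tau a_\tau$; once this is in hand, the existence of $a$ is a routine density/open-set argument.
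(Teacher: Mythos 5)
The paper does not actually prove this statement: it is quoted with references to Mumford and to Birkenhake--Lange, so there is no in-paper proof to compare against. Your argument is, in substance, the standard proof from those sources: split $B\otimes_{\mathbb{Q}}\mathbb{R}\cong\prod_\tau M_d(\mathbb{C})$ along the real places of the maximal totally real subfield $K_0$, write the extended anti-involution on each factor as $X\mapsto H_\tau^{-1}X^*H_\tau$ with $H_\tau$ Hermitian, note that for $a$ with $\tilde a=a$ the conjugate anti-involution $x\mapsto a^{-1}\tilde xa$ is positive whenever every $H_\tau a_\tau$ is positive definite, and produce such an $a\in B$ by density of the $\mathbb{Q}$-form $B^+$ in $(B_{\mathbb{R}})^+$. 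These steps are all correct. (One cosmetic slip: positivity holds iff each $H_\tau a_\tau$ is \emph{definite}, not necessarily positive definite, since after diagonalizing, the summand is $\sum_{i,j}(d_j/d_i)\lvert y_{ji}\rvert^2$, which is also positive when all $d_i<0$; this does not affect the existence argument.)

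The one genuine weak point is the opening reduction to the case where $x\mapsto\tilde x$ restricts to complex conjugation on $K$. Your justification --- that any other order-two automorphism of $K$ has a non-totally-real fixed field and hence an indefinite trace form --- shows that the positive anti-involution you are \emph{looking for} must restrict to complex conjugation; it does not show that the \emph{given} one does. Since conjugation by $a\in B^{\times}$ never changes the restriction to the center, your construction cannot repair an anti-involution restricting to a different order-two automorphism of $K$, and by the Albert--Riehm--Scharlau local criterion a division algebra over a CM-field with several index-two subfields (e.g.\ $\mathbb{Q}(\zeta_8)$, which contains $\mathbb{Q}(\sqrt{2})$, $\mathbb{Q}(i)$ and $\mathbb{Q}(\sqrt{-2})$) can admit a second-kind anti-involution over $\mathbb{Q}(i)$ while admitting none over $\mathbb{Q}(\sqrt{2})$. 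So the hypothesis must be read, as it is in Mumford and Birkenhake--Lange and as it holds in the paper's application (in Lemma \ref{construction of anti-involution} the constructed $\phi$ sends $\zeta_p$ to $\zeta_p^{-1}$), as saying that the given anti-involution restricts to complex conjugation on $K$. With that reading your proof is complete; without it, this step is a real gap, and indeed the statement taken literally with the paper's definition of ``second kind'' would fail.
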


  \begin{flushleft}{\bf{Orders}}\end{flushleft}
   \begin{definition}
    Let $R$ be an integral domain and define $K=\mathrm{Frac}(R)$.
    For a finite-dimensional algebra $B$ over K, a subset $\mathcal{O}\subset B$ which satisfies the following conditions is called an $R$-order.
     \begin{enumerate}
     \item $\mathcal{O}$ is a finitely generated $R$-submodule of $B$
     \item $\mathcal{O}$ spans $B$ over K
     \item $\mathcal{O}$ is closed under multiplication induced from $B$
    \end{enumerate}
   \end{definition}\par
   Often, the definition of an $R$-order is applied for the case $R=\mathbb{Z}$, $K=\mathbb{Q}$.
   \begin{examples}
   Here are some examples of a $\mathbb{Z}$-order (cf.\ \cite[Chapter 10]{Voi21}, \cite[Chapter 1]{BL04}).
    \begin{itemize}
     \item For a number field $K$, the ring of integer $\mathcal{O}_K$ is a $\mathbb{Z}$-order of $K$.
           Also, $\mathcal{O}_K$ is the maximal $\mathbb{Z}$-order with respect to the inclusion.
     \item For a quaternion algebra $\left(\frac{a,b}{\mathbb{Q}}\right)$ ($a,b\in\mathbb{Z}\backslash\{0\}$) with a $\mathbb{Q}$-basis $1,i,j,ij$, $\mathbb{Z}\oplus\mathbb{Z}i\oplus\mathbb{Z}j\oplus\mathbb{Z}ij$ is a $\mathbb{Z}$-order of $\left(\frac{a,b}{\mathbb{Q}}\right)$. 
     \item For an abelian variety $X$, $\mathrm{End}(X)$ is a $\mathbb{Z}$-order of $\mathrm{End}_\mathbb{Q}(X)$.
    \end{itemize}
   \end{examples}

  \begin{flushleft}{\bf{Reduced norms, Reduced characteristic polynomials}}\end{flushleft}\par
   Let $B$ be a finite-dimensional central simple algebra over a number field $K$ with $[B:K]=d^2$ for some integer $d\in\mathbb{Z}_{>0}$.\par
   As in Remark \ref{finite-dimensional central simple algebra}, for some field $K'$ with the finite Galois extension $K'/K$, an isomorphism
   \begin{align*}
    \phi\colon B\otimes_K K'\stackrel{\simeq}{\longrightarrow}\mathrm{M}_d(K')
   \end{align*}
   is induced.
   For $\alpha\in B$, $\mathrm{Nrd}_{B/K}(\alpha)$ is calculated as $\mathrm{det}(\phi(\alpha\otimes1))$.
   Define
   \begin{align*}
    p_\alpha(n)=\mathrm{Nrd}_{B/K}(n-\alpha)=\mathrm{det}(nI-\phi(\alpha\otimes1))
   \end{align*}
   as a polynomial in $n$.
   $p_\alpha(n)$ is invariant under all $\sigma\in \mathrm{Gal}(K'/K)$ and so the coefficients of $p_\alpha(n)$ are all in $K$.
   Thus, $p_\alpha(x)\in K[x]$ and this polynomial is called the reduced characteristic polynomial of $\alpha$.
   Let $P(x)\in K[x]$ be the minimal polynomial of $\alpha$ over $K$ and also this is the minimal polynomial of $\alpha\otimes1\in B\otimes_K K'$ and so of $\phi(\alpha\otimes1)\in\mathrm{M}_d(K')$.
   Thus, the roots of the reduced characteristic polynomial $p_\alpha(x)\in K[x]$ are the same as the roots of the minimal polynomial $P(x)$ by the theorem of linear algebra.
   Moreover, because of the minimality, the reduced characteristic polynomial can be written as $p_\alpha(x)=P(x)^s$ for some $s\in\mathbb{Z}_{>0}$.

  \begin{flushleft}{\bf{Dedekind domains}}\end{flushleft}\par
   Let $R$ be an integral domain and $K$ be its fraction field.\par
   A fractional ideal of $R$ is a non-zero $R$-submodule $I$ of $K$ such that there exists $a\in R$ such that $aI\subset R$.
   A fractional ideal $I$ of $R$ is called \textit{invertible} if there exists a fractional ideal $J$ of $R$ such that $IJ=R$.
   \begin{definition}[{cf.\ \cite[Vol 2, Chapter 9.5]{Coh89}}]
    A Dedekind domain is an integral domain $R$ which satisfies the following equivalent conditions.
    \begin{enumerate}
    \renewcommand{\labelenumi}{(\roman{enumi})}
     \item Every non-zero ideal $I\subsetneq R$ is invertible.
     \item Every non-zero ideal $I\subsetneq R$ is expressed as a finite product of the prime ideals uniquely.
     \item $R$ is Noetherian, integrally closed and all non-zero prime ideals are maximal ideals.
    \end{enumerate}
   \end{definition}
   Let $\mathcal{O}$ be a Dedekind domain and $F$ be its fraction field.
   Let $K/F$ be a finite separable field extension and $\mathcal{O}'$ be the integral closure of $\mathcal{O}$ in $K$.
   Then $\mathcal{O}'$ is a Dedekind domain with the fraction field $K$ (cf.\ \cite[Chapter I, Proposition 8.1]{Neu99}).\par
   For the pair $(\mathcal{O},F,\mathcal{O}',K)$ constructed just now and a non-zero prime ideal $\mathfrak{p}\subset\mathcal{O}$, there is the factorization $\mathfrak{p}\mathcal{O}'=\mathfrak{P}_1^{e_1}\cdots\mathfrak{P}_g^{e_g}$ since $\mathcal{O}'$ is a Dedekind domain.\par
   The next conditons are equivalent and when they hold, we say that $\mathfrak{P}$ is over $\mathfrak{p}$.
   \begin{itemize}
    \item $\mathfrak{P}\cap\mathcal{O}=\mathfrak{p}$
    \item $\mathfrak{P}$ appears in the prime ideal factorization of $\mathfrak{p}\mathcal{O}'$
   \end{itemize}\par
   Take the pair $(\mathcal{O},F,\mathcal{O}',K)$ again and assume $K/F$ is a separable extension of degree $n$.
   For a non-zero prime ideal $\mathfrak{p}\subset\mathcal{O}$, there is the prime ideal factorization $\mathfrak{p}\mathcal{O}'=\mathfrak{P}_1^{e_1}\cdots\mathfrak{P}_g^{e_g}$ with $f_i=[\mathcal{O}'/\mathfrak{P}_i:\mathcal{O}/\mathfrak{p}]$ and then
   \begin{align*}
    n=\sum_{i=1}^g e_if_i.
   \end{align*}
   Under this condition, some notations can be defined (cf.\ \cite[Chapter I, \S8]{Neu99}).
   \begin{definition}
    \begin{enumerate}
     \item $\mathfrak{p}$ is \textit{totally split} in $K$ if $e_i=1$, $f_i=1$ and $g=n$.
     \item $\mathfrak{p}$ is \textit{nonsplit} in $K$ if $g=1$.
     \item $\mathfrak{P}_i$ is \textit{unramified} over $F$ if $e_i=1$ and the field extension $\mathcal{O}'/\mathfrak{P}_i\supset\mathcal{O}/\mathfrak{p}$ is separable.
     \item $\mathfrak{p}$ is \textit{unramified} in $K$ if all $\mathfrak{P}_i$ are unramified over $F$.
    \end{enumerate}
   \end{definition}\par
   Moreover, by assuming $K/F$ is a Galois extension, 
   \begin{align*}
   e_1=\cdots=e_g=e, f_1=\cdots=f_g=f
   \end{align*}
    hold and this implies $n=efg$.
    This theorem is called Hilbert's ramification theory (cf.\ \cite[Chapter I, \S9]{Neu99}).\par
    On the same assumption,
   \begin{align*}
    \mathcal{D}_\mathfrak{P}:=\{\sigma\in \mathrm{Gal}(K/F)\mid\sigma(\mathfrak{P})=\mathfrak{P}\}
   \end{align*}
   is called the decomposition group of $\mathfrak{P}$ and its order is $ef$.\par
   Moreover, if $\mathfrak{P}$ is over a prime ideal $\mathfrak{p}\subset\mathcal{O}$, then $(\mathcal{O}'/\mathfrak{P})/(\mathcal{O}/\mathfrak{p})$ is a normal extension and there is a natural surjective group homomorphism 
   \begin{align*}
    \mathcal{D}_\mathfrak{P}\rightarrow\mathrm{Aut}\left((\mathcal{O}'/\mathfrak{P})/(\mathcal{O}/\mathfrak{p})\right).
   \end{align*}
   Assume $(\mathcal{O}'/\mathfrak{P})/(\mathcal{O}/\mathfrak{p})$ is separable, then this homomorphism is isomorphic if and only if $\mathfrak{P}$ is unramified over $F$.

  \begin{flushleft}{\bf{Cyclotomic polynomials}}\end{flushleft}\par
   In this paper, for a positive integer $n$, let $\zeta_n$ be the primitive $n$-th root which has a smallest positive angle.\par
   Let $\Phi_n(x)\in\mathbb{Z}[x]$ be the minimal polynomial of $\zeta_n$ over $\mathbb{Q}$.
   This is called a cyclotomic polynomial.
   The degree of $\Phi_n(x)$ is Euler's totient function $\varphi(n)$.
   Also let $\Psi_n(x)$ be the minimal polynomial of $\zeta_n+\frac{1}{\zeta_n}=2\mathrm{cos}\left(\frac{2\pi}{n}\right)$ over $\mathbb{Q}$ (define $\Psi_4(x)=x$ for the case $n=4$).\par
   By the definition, for $n\geq3$, the equation
   \begin{align*}
    \Phi_n(x)=x^{\frac{\varphi(n)}{2}}\Psi_n\left(x+\frac{1}{x}\right)
   \end{align*}
   holds and so $\Psi_n(x)$ has degree $\frac{\varphi(n)}{2}$.\par
   Also, there is a theorem for the constant term of $\Psi_n(x)$.
   \begin{theorem}[{cf.\ \cite{ACR16}}]\label{constant term of minimal polynomial}
    The absolute value of the constant term of $\Psi_n(x)$ is equal to $1$ except the following cases.
    \begin{enumerate}
    \renewcommand{\labelenumi}{\rm{(\roman{enumi})}}
     \item $\abs{\Psi_n(0)}=0$ if $n=4$
     \item $\abs{\Psi_n(0)}=2$ if $n=2^m$ with $m\in\mathbb{Z}_{\geq0}\backslash\{2\}$
     \item $\abs{\Psi_n(0)}=p$ if $n=4p^k$ with $k\in\mathbb{Z}_{>0},\ p\text{ an odd prime number}$
    \end{enumerate}
   \end{theorem}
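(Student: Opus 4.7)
The plan is to convert the problem into an evaluation of cyclotomic polynomials at $i$ and then exploit elementary cyclotomic identities together with a short case analysis. For $n \geq 3$, substituting $x = i$ (so that $i + i^{-1} = 0$) in the identity $\Phi_n(x) = x^{\varphi(n)/2}\Psi_n(x + x^{-1})$ recalled just before the theorem gives $|\Psi_n(0)| = |\Phi_n(i)|$. The small cases $n = 1, 2$, where $\varphi(n)$ is odd and the identity fails, are treated directly: $\Psi_1(x) = x - 2$ and $\Psi_2(x) = x + 2$, both with $|\Psi_n(0)| = 2$, falling under case (ii). So everything reduces to computing $|\Phi_n(i)|$ for $n \geq 3$.

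Write $n = 2^a m$ with $m$ odd. Using the standard cyclotomic identities $\Phi_{pn}(x) = \Phi_n(x^p)$ when $p \mid n$ and $\Phi_{pn}(x) = \Phi_n(x^p)/\Phi_n(x)$ when $p \nmid n$, a short induction on $a$ gives, for odd $m > 1$ and $a \geq 1$, the relation $\Phi_{2^a m}(x) = \Phi_m\bigl(-x^{2^{a-1}}\bigr)$. Evaluating at $x = i$ and noting that $i^{2^{a-1}}$ equals $i,\,-1,\,1,\,1,\dots$ for $a = 1, 2, 3, 4, \dots$, one obtains $|\Phi_n(i)| = |\Phi_m(-i)| = |\Phi_m(i)|$ when $a = 1$ (because $\Phi_m$ has real coefficients), $|\Phi_n(i)| = |\Phi_m(1)|$ when $a = 2$, and $|\Phi_n(i)| = |\Phi_m(-1)|$ when $a \geq 3$. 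The pure $2$-power case $n = 2^a$ (i.e.\ $m = 1$) is handled by the explicit formula $\Phi_{2^a}(x) = x^{2^{a-1}} + 1$, giving $0$ when $a = 2$ and $2$ when $a \geq 3$; these are exactly case (i) and the rest of case (ii).

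It remains to evaluate $|\Phi_m(\pm 1)|$ and $|\Phi_m(i)|$ for odd $m \geq 3$. By M\"obius inversion $\Phi_m(i) = \prod_{d \mid m}(i^{m/d} - 1)^{\mu(d)}$; since each exponent $m/d$ is odd, every factor has modulus $\sqrt{2}$, and $\sum_{d \mid m}\mu(d) = 0$ forces $|\Phi_m(i)| = 1$. The classical evaluations $\Phi_m(1) = p$ if $m = p^k$ is an odd prime power and $\Phi_m(1) = 1$ otherwise (for $m > 1$) then finish case (iii) for $n = 4p^k$. Likewise $\Phi_m(-1) = \Phi_{2m}(1) = 1$ for every odd $m > 1$, since $2m$ is never a prime power in that range; this yields $|\Phi_n(i)| = 1$ for $a \geq 3$ with $m > 1$, giving the generic value $|\Psi_n(0)| = 1$.

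The main effort is the clean bookkeeping of the case split on the $2$-adic valuation of $n$ (values $0$, $1$, $2$, and $\geq 3$) and, when that valuation equals $2$, on whether the odd part is or is not a prime power. A minor pitfall is the sign ambiguity in $\Phi_{2^a m}(x) = \pm\Phi_m(-x^{2^{a-1}})$ at $m = 1$, but this is harmless: only absolute values enter, and the case $m = 1$ is handled by the direct formula for $\Phi_{2^a}(x)$.
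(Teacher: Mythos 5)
The paper does not prove this statement; it is quoted from the reference [ACR16] with no argument given, so there is no internal proof to compare against. Your proposal is a correct, self-contained derivation. The pivot $|\Psi_n(0)|=|\Phi_n(i)|$ obtained by setting $x=i$ in $\Phi_n(x)=x^{\varphi(n)/2}\Psi_n\left(x+\frac{1}{x}\right)$ is legitimate (it is a polynomial identity, and $\left|i^{\varphi(n)/2}\right|=1$), the reduction $\Phi_{2^a m}(x)=\Phi_m\left(-x^{2^{a-1}}\right)$ for odd $m>1$, $a\geq1$ is the standard composition of $\Phi_{2m}(x)=\Phi_m(-x)$ with $\Phi_{2^a m}(x)=\Phi_{2m}\left(x^{2^{a-1}}\right)$, and the three evaluations you need all check out: $|\Phi_m(i)|=1$ for odd $m>1$ via M\"obius inversion (each factor $i^{m/d}-1$ has modulus $\sqrt{2}$ since $m/d$ is odd, and $\sum_{d\mid m}\mu(d)=0$), $\Phi_m(1)=p$ exactly for odd prime powers $m=p^k$ and $1$ otherwise, and $\Phi_m(-1)=\Phi_{2m}(1)=1$ since $2m$ is never a prime power for odd $m>1$. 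The boundary cases $n=1,2$ (where the $\Psi$-identity is unavailable) and $n=2^a$ (where the sign in $\Phi_{2^am}(x)=\pm\Phi_m\left(-x^{2^{a-1}}\right)$ would be ambiguous) are correctly handled by direct computation, and the resulting case split reproduces exactly the exceptional list (i)--(iii). I see no gap.
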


  \begin{flushleft}{\bf{Dirichlet density, \v{C}ebotarev's density theorem}}\end{flushleft}\par
   For a number field $K$ and its ring of integer $\mathcal{O}_K$, a prime ideal of $\mathcal{O}_K$ is often said as a prime ideal of a field $K$.
   \begin{definition}[{\cite[Chapter I\hspace{-.1em}I\hspace{-.1em}I \S 1, Chapter V\hspace{-.1em}I\hspace{-.1em}I Definition 13.1]{Neu99}}]
    Let $M$ be a set of non-zero prime ideals of a number field $K$.
    Denote
    \begin{align*}
     \mathfrak{N}(\mathfrak{p})=p^{f}
    \end{align*}
    for a prime ideal $\mathfrak{p}\subset\mathcal{O}_K$ where $p\mathbb{Z}=\mathfrak{p}\cap\mathbb{Z}$ and $f=[\mathcal{O}_K/\mathfrak{p}:\mathbb{Z}/p\mathbb{Z}]$.\par
    Under this notation, the limit
    \begin{align*}
     d(M)=\lim_{s\to1+0}\frac{\sum_{\mathfrak{p}\in M} \mathfrak{N}(\mathfrak{p})^{-s}}{\sum_{\mathfrak{p}} \mathfrak{N}(\mathfrak{p})^{-s}},
    \end{align*}
    where the denominater is the sum for all non-zero prime ideals, is called the Dirichlet density of $M$ if it exists.
   \end{definition}
   \begin{remark}\label{infinite property}
    In the above definition, the denominator $\sum_{\mathfrak{p}} \mathfrak{N}(\mathfrak{p})^{-s}$ diverges to $+\infty$ as \cite[Chapter V\hspace{-.1em}I\hspace{-.1em}I \S 13]{Neu99}.\par
    Thus, if  $M$ is a finite set of prime ideals of $K$, then $d(M)=0$.
   \end{remark}
   \begin{definition}[{cf.\ \cite[Chapter 6.3]{Sam70}}]\label{Frobenius automorphism}
    Let $K/F$ be a Galois extension of number fields with Galois group $G=\mathrm{Gal}(K/F)$ and let $\mathfrak{P}$ be a prime ideal of $K$ which is unramified over $F$.
    Denote $\mathfrak{p}=\mathfrak{P}\cap\mathcal{O}_F$.\par
    Then there is one and only one $\sigma\in G$ such that $\sigma$ induces the map $a\mapsto a^s$ on $\mathcal{O}_K/\mathfrak{P}$ where $s=\#(\mathcal{O}_F/\mathfrak{p})$.
    This $\sigma$ is denoted by $\left(\frac{K/F}{\mathfrak{P}}\right)$ and called the Frobenius automorphism.
   \end{definition}
   \begin{remark}
    On the notation in Definition \ref{Frobenius automorphism}, $\sigma(\mathfrak{P})=\mathfrak{P}$ and by considering the isomorphism $\mathcal{D}_\mathfrak{P}\rightarrow\mathrm{Aut}\left((\mathcal{O}_K/\mathfrak{P})/(\mathcal{O}_F/\mathfrak{p})\right)$, $\mathcal{D}_\mathfrak{P}$ is generated by $\sigma$.
   \end{remark}
   \begin{definition}[{\cite[Chapter V\hspace{-.1em}I\hspace{-.1em}I \S 13]{Neu99}}]
    Let $K/F$ be a Galois extension of number fields with Galois group $G$ and take $\sigma\in G$.\par
    Define $P_{K/F}(\sigma)$ as the set of prime ideals $\mathfrak{p}$ of $F$ unramified in $K$ such that there is a prime ideal $\mathfrak{P}$ of $K$ over $\mathfrak{p}$ such that the Frobenius automorphism $\left(\frac{K/F}{\mathfrak{P}}\right)$ is equal to $\sigma$.
   \end{definition}
   The density of $P_{K/F}(\sigma)$ is calculated by \v{C}ebotarev's density theorem.
   \begin{theorem}[{cf.\ \cite[Chapter V\hspace{-.1em}I\hspace{-.1em}I Theorem 13.4]{Neu99}}]\label{Cebotarev's density theorem}
    Let $K/F$ be a Galois extension of number fields with Galois group $G$.
    Then for every $\sigma\in G$, the set $P_{K/F}(\sigma)$ has a density, and it is given by
    \begin{align*}
     d(P_{K/F}(\sigma))=\frac{\#\langle\sigma\rangle}{\# G},
    \end{align*}
    where $\langle\sigma\rangle:=\{\tau^{-1}\sigma\tau\mid\tau\in G\}$.\par
    In particular, by Remark \ref{infinite property}, $P_{K/F}(\sigma)$ consits of infinitely many prime ideals.
   \end{theorem}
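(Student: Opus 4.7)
The plan is to follow the classical two-step strategy for Čebotarev's density theorem: first prove the density formula for cyclic abelian extensions analytically via $L$-functions, then reduce the general Galois case to the cyclic one through a group-theoretic counting argument.

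For the abelian (in fact cyclic) step, combine the Artin reciprocity isomorphism---which identifies $G=\mathrm{Gal}(K/F)$ with a quotient of a suitable ray class group of $F$ and sends an unramified prime $\mathfrak{p}$ to its Frobenius class---with the orthogonality relations for characters of $G$ and the non-vanishing of Hecke $L$-functions at $s=1$. Concretely, for $\sigma\in G$ one expresses the indicator of $\{\sigma\}$ as $\frac{1}{|G|}\sum_{\chi}\overline{\chi(\sigma)}\chi(\,\cdot\,)$; summing against $\mathfrak{N}(\mathfrak{p})^{-s}$ and using that only the trivial character contributes a pole at $s=1$ (since $L(1,\chi)\neq 0$ for each non-trivial $\chi$) yields Dirichlet density $1/|G|$ for the set of primes with Frobenius $\sigma$.

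For the reduction, fix $\sigma\in G$, let $H$ be the cyclic subgroup of $G$ generated by $\sigma$, and set $L:=K^{H}$, so that $K/L$ is cyclic with Galois group $H$. By the abelian case applied to $K/L$, the Dirichlet density of primes $\mathfrak{q}$ of $L$ with Frobenius $\sigma\in H$ equals $1/|H|$. Such a $\mathfrak{q}$ lies above a prime $\mathfrak{p}\in P_{K/F}(\sigma)$ precisely when the decomposition subgroup at a prime $\mathfrak{Q}\mid\mathfrak{q}$ of $K$ equals $H$ inside $G$, equivalently when the residue degree $f(\mathfrak{q}/\mathfrak{p})$ equals $1$. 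An orbit-stabilizer fiber count shows that each $\mathfrak{p}\in P_{K/F}(\sigma)$ is hit by exactly $|Z_G(\sigma)|/|H|$ such primes $\mathfrak{q}$. Combining this with $\#\langle\sigma\rangle=[G:Z_G(\sigma)]$ and descending densities from $L$ down to $F$ gives $d(P_{K/F}(\sigma))=\#\langle\sigma\rangle/\#G$, as required.

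The main obstacle is the analytic input in the cyclic case, namely $L(1,\chi)\neq 0$ for every non-trivial Hecke character $\chi$ of $F$; this is the only non-formal ingredient, and everything else---the Artin map, the character orthogonality computation, and the group-theoretic descent---is formal once it is available. The trailing statement that $P_{K/F}(\sigma)$ is infinite is then immediate from $d(P_{K/F}(\sigma))>0$ together with Remark~\ref{infinite property}.
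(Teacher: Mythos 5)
This theorem is not proved in the paper at all: it is quoted verbatim from Neukirch (Chapter VII, Theorem 13.4) as a known classical result, so there is no internal proof to compare your argument against. Your sketch is the standard proof and is correct in its essentials: the cyclic case via Artin reciprocity, character orthogonality and $L(1,\chi)\neq 0$; the descent from $L=K^{\langle\sigma\rangle}$ using that a prime $\mathfrak{q}$ of $L$ with Frobenius $\sigma$ in $K/L$ sits over some $\mathfrak{p}\in P_{K/F}(\sigma)$ exactly when $f(\mathfrak{q}/\mathfrak{p})=1$ (the degree-$>1$ primes having density zero, and $\mathfrak{N}(\mathfrak{q})=\mathfrak{N}(\mathfrak{p})$ for the degree-one ones, so densities transfer); and the fibre count $\#Z_G(\sigma)/\#\langle\sigma\rangle$, which together with $\#\{\tau^{-1}\sigma\tau\}=[G:Z_G(\sigma)]$ gives the stated density. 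The only cosmetic difference from Neukirch's own treatment is that he avoids full Artin reciprocity by a further reduction to cyclotomic extensions of $\mathbb{Q}$, whereas you invoke the reciprocity isomorphism directly; both routes rest on the same analytic input (non-vanishing of the relevant $L$-functions at $s=1$) and are equally standard.
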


\section{Endomorphisms of simple abelian varieties}\label{Endomorphism of simple abelian variety}
 This section is devoted to examining endomorphisms of simple abelian varieties and their dynamical degrees in detail.
 \subsection{Endomorphism algebras of simple abelian varieties}\label{Endomorphism algebra of simple abelian varieties}
  Let $X$ be a $g$-dimensional simple abelian variety and define $B:=\mathrm{End}_\mathbb{Q}(X):=\mathrm{End}(X)\otimes_\mathbb{Z}\mathbb{Q}$ as in Section \ref{Dynamical degree}.
  $B$ is a division ring of finite dimension over $\mathbb{Q}$ and the Rosati involution $'$ on $B=\mathrm{End}_\mathbb{Q}(X)$ is a positive anti-involution.
  Also, $B$ has a field $K$ as a center and the anti-involution $'$ can be restricted to $K$.
  By defining $K_0:=\{x\in K\mid x'=x\}$, $K_0$ be a totally real number field and either $K=K_0$ or $K$ is a totally complex quadratic extension of $K_0$.
  Denote $[B:K]=d^2$, $[K:\mathbb{Q}]=e$ and $[K_0:\mathbb{Q}]=e_0$.
  In summary, the endomorphism algebra $B=\mathrm{End}_\mathbb{Q}(X)$ can be classified as below (cf.\ \cite[Chapter 5.5]{BL04}).
  \begin{table}[hbtp]
   \caption{Classification of $\mathrm{End}_\mathbb{Q}(X)$}
   \label{table}
   \begin{tabular}{c|c|c|c|c|c}
    & $B=\mathrm{End}_\mathbb{Q}(X)$ & $K$ & $d$ & $e_0$ & restriction \\
    \hline\hline
    Type 1 & $K$                                             & totally real & $1$ & $e$           & $e\mid g$\\
    Type 2 & totally indefinite quaternion algebra over $K$  & totally real & $2$ & $e$           & $2e\mid g$\\
    Type 3 & totally definite quaternion algebra over $K$    & totally real & $2$ & $e$           & $2e\mid g$\\
    Type 4 & division ring with center $K$                   & CM-field     & $d$ & $\frac{e}{2}$ & $\frac{d^2 e}{2}\mid g$
   \end{tabular}
  \end{table}

 \subsection{Automorphisms}\label{Automorphism of simple abelian varieties}
  In this subsection, by using the above classification, we analyze automorphisms of a simple abelian variety $X$, in each type.\par
  Any endomorphism of $X$ can be considered as an element of $B=\mathrm{End}_{\mathbb{Q}}(X)$ and moreover it is an integral element of $B$.
  Thus, any automorphism of $X$ can be considered as an element $\alpha\in B$ such that $\alpha$ and $\alpha^{-1}$ are both integral elements of $B$.\par
  $B=\mathrm{End}_\mathbb{Q}(X)$, $K$, $K_0$, $d$, $e$ and $e_0$ are as in Section \ref{Endomorphism algebra of simple abelian varieties}.
  Define
  \begin{align*}
   U_K:=\{x\in\mathcal{O}_K\backslash\{0\}\mid x^{-1}\in\mathcal{O}_K\}
  \end{align*}
  as the group of the invertible elements in $\mathcal{O}_K$.
  \begin{flushleft}{\bf{Type 1}}\end{flushleft}\par
   Any automorphism of $X$ is regarded as an element $\alpha\in\mathcal{O}_K$ and its inverse is also in $\mathcal{O}_K$ and so $\alpha$ is in $U_K$.\par
   Let $F(x)=x^n+a_1x^{n-1}+\cdots+a_n\in\mathbb{Z}[x]$ be the minimal polynomial of $\alpha$, and then $a_nx^n+a_{n-1}x^{n-1}+\cdots+1$ has the root $\alpha^{-1}$.
   This polynomial would be the minimal polynomial of $\alpha^{-1}$ with some constant multiplication, and so $a_n=\pm1$.\par
   Thus, $\alpha$ is a root of a monic polynomial in $\mathbb{Z}[x]$ whose constant term is $\pm1$ and its roots are all real.
  \begin{flushleft}{\bf{Type 2, Type 3, Type 4}}\end{flushleft}\par
   Any automorphism of $X$ can be regarded as an element $\alpha\in B$ such that both $\alpha$ and $\alpha^{-1}$ are integral elements in $B$.
   This implies that the coefficients of the minimal polynomial of $\alpha$ (resp. $\alpha^{-1}$) over $K$ are all in $\mathcal{O}_K$ and so the constant term is in $U_K$ and this minimal polynomial is of degree at most $d$.\par
   Also, the minimal polynomial of $\alpha$ over $\mathbb{Q}$ is of the form $x^n+a_1x^{n-1}+\cdots\pm1\in\mathbb{Z}[x]$ as in Type 1.

 \subsection{Caluculations of dynamical degrees}\label{Calculation}
  This subsection is devoted to calculating the values of the first dynamical degree of surjective endomorphisms of simple abelian varieties.\par
  Let $X$ be a $g$-dimensional simple abelian variety.
  $K$, $K_0$, $d$, $e$ and $e_0$ are as in Section \ref{Endomorphism algebra of simple abelian varieties}.
  \begin{flushleft}{\bf{Type 1}}\end{flushleft}\par
   For a surjective endomorphism $\alpha\in\mathrm{End}(X)\subset\mathrm{End}_\mathbb{Q}(X)=K$, the minimal polynomial $F(x)\in\mathbb{Z}[x]$ of $\alpha$ of degree $d'$ has only real roots.
   Let $\rho_1,\rho_2,\ldots,\rho_g$ be the eigenvalues of $\rho_a(\alpha)$ and define the endomorphism $\phi:=1-(n-\alpha)\colon X\rightarrow X$ for some integer $n$.\par
   By applying Theorem \ref{Fix1} and Theorem \ref{Fix2} for $\phi:X\rightarrow X$,
   \begin{align*}
    \#{\mathrm{Fix}(1-(n-\alpha))}=\prod_{i=1}^{g}(n-\rho_{i})(n-\overline{\rho_{i}})\\
    \#{\mathrm{Fix}(1-(n-\alpha))}=\mathrm{Nrd}_{K/\mathbb{Q}}(n-\alpha)^{\frac{2g}{1\cdot e}}=\mathrm{N}_{K/\mathbb{Q}}(n-\alpha)^{\frac{2g}{e}}=&\mathrm{N}_{\mathbb{Q}(\alpha)/\mathbb{Q}}(n-\alpha)^{\frac{e}{d'}\cdot\frac{2g}{e}}
                                                                                                                                                 =&{F(n)}^{\frac{2g}{d'}}.
   \end{align*}
   Since these two formulas are equal for any integer $n\in\mathbb{Z}$,
   \begin{equation*}
    \prod_{i=1}^{g}(x-\rho_{i})(x-\overline{\rho_{i}})={F(x)}^{\frac{2g}{d'}}
   \end{equation*}
   as polynomials in $x$.
   Thus, each conjugate of $\alpha$ appears $\frac{2g}{d'}$ times in $\rho_1,\ldots,\rho_g,\overline{\rho_1},\ldots,\overline{\rho_g}$ and the dynamical degrees are calculated by using this.\par
   Especially, the first dynamical degree of $\alpha\colon X\rightarrow X$ is the square of the maximal absolute value of the roots of $F(x)$.
  \begin{flushleft}{\bf{Type 2, Type 3, Type 4}}\end{flushleft}\par
   A surjective endomorphism $\alpha\in\mathrm{End}(X)$ can be identified as an element which has the minimal polynomial of the form $G(x)\in\mathcal{O}_K[x]$ over $K$.
   Also, let $F(x)\in\mathbb{Z}[x]$ be the minimal polynomial of $\alpha$ over $\mathbb{Q}$.
   Denote the degree of $G(x)$ by $d'\leq d$ and the degree of $F(x)$ by $d''$.\par
   Denote the reduced characteristic polynomial of $\alpha\in B$ by $G'(x)$, whose degree is $d$ and as mentioned in Section \ref{Algebraic preparation}, $G'(x)$ can be written as $G'(x)=G(x)^\frac{d}{d'}$.
   Let $\rho_1,\rho_2,\ldots,\rho_g$ be the eigenvalues of $\rho_a(\alpha)$ and define the endomorphism $\phi:=1-(n-\alpha)\colon X\rightarrow X$ for some integer $n$.\par
   By applying Theorem \ref{Fix1} and Theorem \ref{Fix2} for $\phi:X\rightarrow X$,
   \begin{align*}
    \#{\mathrm{Fix}(1-(n-\alpha))}=\prod_{i=1}^{g}(n-\rho_{i})(n-\overline{\rho_{i}})
   \end{align*}
   \begin{align*}
    \#{\mathrm{Fix}(1-(n-\alpha))}=\mathrm{Nrd}_{B/\mathbb{Q}}(n-\alpha)^{\frac{2g}{de}}=\mathrm{N}_{K/\mathbb{Q}}(\mathrm{Nrd}_{B/K}(n-\alpha))^{\frac{2g}{de}}&=\mathrm{N}_{K/\mathbb{Q}}(G'(n))^{\frac{2g}{de}}\\
                                                                                                                                                                &=\prod_{i=1}^{e}(\sigma_i(G'(n)))^{\frac{2g}{de}}\\
                                                                                                                                                                &=\prod_{i=1}^{e}(\sigma_i(G(n)))^{\frac{2g}{d'e}}
   \end{align*}
   where $\left\{\sigma_i\right\}_{1\leq i\leq e}$ is the set of all $\mathbb{Q}$-embeddings $K\hookrightarrow\mathbb{C}$.
   Since these two formulas are equal for any integer $n\in\mathbb{Z}$,
   \begin{align*}
    \prod_{i=1}^{g}(x-\rho_{i})(x-\overline{\rho_{i}})&=\prod_{i=1}^{e}(\sigma_i(G(x)))^{\frac{2g}{d'e}}={F(x)}^{\frac{2g}{d''}}
   \end{align*}
   as polynomials in $x$.
   Thus, each conjugate of $\alpha$ appeares $\frac{2g}{d''}$ times in $\rho_1,\ldots,\rho_g,\overline{\rho_1},\ldots,\overline{\rho_g}$ and the dynamical degrees are calculated by using this.\par
   Especially, the first dynamical degree is the square of the maximal absolute value of roots of all over $\sigma_i(G(x))$.\\
   \par
   As a conclusion of Sections \ref{Automorphism of simple abelian varieties} and \ref{Calculation}, an automorphism $f$ of a simple abelian variety $X$ is identified as an element of the central simple division algebra $B=\mathrm{End}_\mathbb{Q}(X)$ over $K$.
   Denote this element by $\alpha$ and by considering the tower of extensions $K(\alpha)\supset K\supset \mathbb{Q}$, $\alpha$ has a minimal polynomial $F(x)\in\mathbb{Z}[x]$ over $\mathbb{Q}$ with the constant term $\pm1$.\par
   Also, the first dynamical degree of $f$ is calculated as the square of the maximal absolute value of the roots of $F(x)$.

\section{Constructions of simple abelian varieties}\label{Construction of simple abelian varieties}
 In Section \ref{Main theorem}, the next lemma proposed in \cite{DH22} is the key. This lemma is an immediate consequence of \cite[Chapter 9.4, Chapter 9.9]{BL04} composed with Remark \ref{Poincare remark}.
 \begin{lemma}[{\cite[Proposition 2.3]{DH22}}]\label{Construction}
  Let $B$ be a totally indefinite quaternion algebra over a totally real number field $F$ with $\left[F:\mathbb{Q}\right]=e$ and $'$ a positive anti-involution on $B$.
  Fix an order $\mathcal{O}$ in $B$ and suppose that $B$ is divisional.
  Then there exists a $2e$-dimensional simple abelian variety $X$ whose endomorphism ring $\mathrm{End}(X)$ contains $\mathcal{O}$. 
 \end{lemma}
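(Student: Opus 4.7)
The plan is to realise $X$ analytically as a complex torus of the form $V/\mathcal{O}$, where $V$ is a suitable complexification of $B\otimes_\mathbb{Q}\mathbb{R}$ on which $\mathcal{O}$ acts by left multiplication, polarise it with the help of the positive anti-involution $'$, and finally invoke Poincar\'e's complete reducibility together with Remark \ref{Poincare remark} to force simplicity.

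For the analytic construction, put $V_\mathbb{R}:=B\otimes_\mathbb{Q}\mathbb{R}$, an $\mathbb{R}$-vector space of dimension $4e$ containing $\mathcal{O}$ as a full lattice. Because $B$ is totally indefinite, $V_\mathbb{R}\simeq\prod_{\sigma\colon F\hookrightarrow\mathbb{R}}M_2(\mathbb{R})$, and I would choose an element $j\in V_\mathbb{R}$ with $j^2=-1$ (built componentwise on each $M_2(\mathbb{R})$-factor). Right multiplication by $j$ defines a complex structure $J_j$ on $V_\mathbb{R}$ that commutes with the left action of $B_\mathbb{R}$, so $V:=(V_\mathbb{R},J_j)$ is a $\mathbb{C}$-vector space of dimension $2e$ on which $\mathcal{O}$ acts $\mathbb{C}$-linearly.

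To produce the polarisation I would use Theorem \ref{construction of positive anti-involution} to pick an element $t\in B$ with $t'=-t$ and suitable real-place positivity, and define
\[
E(x,y):=\mathrm{Tr}_{B/\mathbb{Q}}(t\cdot x'\cdot y)
\]
extended $\mathbb{R}$-linearly to $V_\mathbb{R}$. Choosing $j$ compatibly with $t$ (so that the Hermitian form $E(\cdot,J_j\cdot)$ is positive definite), the pair $(E,J_j)$ satisfies the Riemann bilinear relations on $\mathcal{O}$, which is precisely the construction carried out in \cite[Chapter 9.4]{BL04}. This makes $X:=V/\mathcal{O}$ into a polarised abelian variety of dimension $2e$, and left multiplication by $\mathcal{O}$ tautologically realises $\mathcal{O}\hookrightarrow\mathrm{End}(X)$.

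The main obstacle is simplicity: knowing that the division algebra $B$ embeds into $\mathrm{End}_\mathbb{Q}(X)$ is not by itself enough, since $B$ could a priori sit inside a strictly larger non-divisional endomorphism algebra produced by a hidden isogeny decomposition. The resolution, which is the content of \cite[Chapter 9.9]{BL04}, is to show that for the construction above one actually has $\mathrm{End}_\mathbb{Q}(X)=B$. Concretely, any rational endomorphism is a $\mathbb{Q}$-linear endomorphism of $B$ commuting with right multiplication by $j$; inside $\mathrm{End}_\mathbb{Q}(B)\simeq B\otimes_\mathbb{Q}B^{\mathrm{op}}$, the centraliser of this right multiplication reduces, for a sufficiently generic $j$ satisfying the Riemann relations, to left multiplications by elements of $B$. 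Once $\mathrm{End}_\mathbb{Q}(X)=B$ is established, $\mathrm{End}_\mathbb{Q}(X)$ is a division ring, so Remark \ref{Poincare remark} forces $X$ to be simple, completing the argument.
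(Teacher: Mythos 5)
Your proposal is correct and takes essentially the same route as the paper, which proves this lemma simply by citing it as an immediate consequence of \cite[Chapter 9.4, Chapter 9.9]{BL04} combined with Remark \ref{Poincare remark}: your complex-structure-plus-polarization construction on $(B\otimes_\mathbb{Q}\mathbb{R})/\mathcal{O}$ is the content of Chapter 9.4, and the genericity statement $\mathrm{End}_\mathbb{Q}(X)=B$, which you rightly identify as the crux for simplicity, is the content of Chapter 9.9. (The only slip is the claim $\mathrm{End}_\mathbb{Q}(B)\simeq B\otimes_\mathbb{Q}B^{\mathrm{op}}$, which fails for $F\neq\mathbb{Q}$ since e.g.\ $f\otimes 1-1\otimes f$ acts trivially for $f\in F$, but this is a non-load-bearing aside given that you defer that step to \cite[Chapter 9.9]{BL04} anyway.)
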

 As this therorem, the next theorems can be deduced from \cite[Chapter 9]{BL04}.
 \begin{lemma}[{cf.\ \cite[Chapter 9.2]{BL04}}]\label{Construction1}
  Let $F$ be a totally real number field with $\left[F:\mathbb{Q}\right]=e$ and $'$ be a positive anti-involution on $F$ (e.g., $'=\mathrm{id}_F$).
  Fix an order $\mathcal{O}$ in $F$ and let $m$ be a positive integer.
  Then there exists an $em$-dimensional simple abelian variety $X$ with an isomorphism $F\stackrel{\simeq}{\longrightarrow}\mathrm{End}_{\mathbb{Q}}(X)$ which induces an injective ring homomorphism $\mathcal{O}\hookrightarrow\mathrm{End}(X)$.
 \end{lemma}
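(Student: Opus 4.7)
The plan is to mirror the construction of Lemma \ref{Construction}, replacing the Type 2 (totally indefinite quaternion) setting with the Type 1 (real multiplication) setting of Table \ref{table} and invoking \cite[Chapter 9.2]{BL04} in place of Chapters 9.4/9.9. Since $F$ is commutative and totally real, the only positive anti-involution on $F$ is $' = \mathrm{id}_F$, so the hypothesis on $'$ is essentially vacuous and the standard trace-pairing construction becomes available.

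First I would produce a candidate complex torus carrying an $\mathcal{O}$-action. Take $\Lambda := \mathcal{O}^{2m}$, a $\mathbb{Z}$-lattice of rank $2em$ which is a free left $\mathcal{O}$-module of rank $2m$. Via the decomposition $F_{\mathbb{R}} := F \otimes_{\mathbb{Q}} \mathbb{R} \cong \prod_{i=1}^{e} \mathbb{R}$ induced by the real embeddings $\sigma_1,\dots,\sigma_e$, one obtains
\begin{align*}
 V_{\mathbb{R}} := \Lambda \otimes_{\mathbb{Z}} \mathbb{R} \cong \bigoplus_{i=1}^{e} V_i, \qquad V_i \cong \mathbb{R}^{2m},
\end{align*}
with $\mathcal{O}$ acting on $V_i$ as real scalars through $\sigma_i$. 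A choice of complex structures $J_i$ on each $V_i$ assembles to $J := \bigoplus_{i} J_i$ on $V_{\mathbb{R}}$, which automatically commutes with the $\mathcal{O}$-action. Thus $X_J := (V_{\mathbb{R}}, J)/\Lambda$ is an $em$-dimensional complex torus equipped with an embedding $\mathcal{O} \hookrightarrow \mathrm{End}(X_J)$.

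To promote $X_J$ to an abelian variety I would use the standard real-multiplication recipe: fix a totally positive element $\alpha \in F$ and the standard symplectic $\mathcal{O}$-bilinear form $\langle -, - \rangle_0$ on $\mathcal{O}^{2m}$, and set
\begin{align*}
 E(x,y) := \mathrm{Tr}_{F/\mathbb{Q}}\bigl(\alpha\, \langle x, y \rangle_0\bigr),
\end{align*}
which is an integral alternating form on $\Lambda$. The positivity of the trace form $x \mapsto \mathrm{Tr}_{F/\mathbb{Q}}(x \cdot x)$ on $F \setminus \{0\}$, which is exactly the positivity of $' = \mathrm{id}_F$, ensures that for every $J$ in a nonempty open subset $\mathcal{U}$ of the product of Siegel upper half-spaces $\mathbb{H}_m^e$, the form $E$ is the imaginary part of a positive definite Hermitian form on $(V_{\mathbb{R}}, J)$ and thus defines a polarization on $X_J$. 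This is the Type 1 specialization of the construction in \cite[Chapter 9.2]{BL04}.

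The main obstacle is to arrange simplicity and $\mathrm{End}_{\mathbb{Q}}(X_J) = F$ exactly, rather than some strictly larger algebra. The parameter space $\mathcal{U}$ is positive-dimensional (of dimension $e \cdot m(m+1)/2$), and inside it the loci where $\mathrm{End}_{\mathbb{Q}}(X_J)$ strictly contains $F$ form a countable union of proper analytic (indeed Shimura) subvarieties, one for each additional PEL datum compatible with real multiplication by $F$ as permitted by the Albert classification recalled in Table \ref{table}. A very general $J \in \mathcal{U}$ therefore produces $X_J$ with $\mathrm{End}_{\mathbb{Q}}(X_J) = F$; since $F$ is a division algebra, Remark \ref{Poincare remark} forces $X_J$ to be simple. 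The embedding $\mathcal{O} \hookrightarrow \mathrm{End}(X_J)$ inherited from the first step is then the required injection, completing the construction.
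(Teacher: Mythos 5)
Your construction is exactly the content of the citation the paper relies on: the paper offers no proof of this lemma beyond "deduced from \cite[Chapter 9]{BL04}" together with Remark \ref{Poincare remark}, and your sketch (lattice $\mathcal{O}^{2m}$, decomposition of $F\otimes_{\mathbb{Q}}\mathbb{R}$, trace-form polarization, very general member to force $\mathrm{End}_{\mathbb{Q}}(X)=F$, then simplicity from divisionality) is precisely the argument of \cite[Chapters 9.2 and 9.9]{BL04} that the paper intends. So the proposal is correct and follows essentially the same route as the paper.
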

 \begin{remark}
  Consider the case $\mathcal{O}=\mathcal{O}_F$.
  Since the ring of integer $\mathcal{O}_F$ of $F$ is a maximal $\mathbb{Z}$-order of $F$ and $\mathrm{End}(X)$ is an order of $\mathrm{End}_\mathbb{Q}(X)$, the $em$-dimensional simple abelian variety $X$ in Lemma \ref{Construction1} satisfies $\mathrm{End}(X)\simeq\mathcal{O}_F$.
 \end{remark}
 \begin{lemma}[{cf.\ \cite[Chapter 9.4]{BL04}}]\label{Construction2}
  Let $B$ be a totally indefinite quaternion algebra over a totally real number field $F$ with $\left[F:\mathbb{Q}\right]=e$ and $'$ be a positive anti-involution on $B$.
  Fix an order $\mathcal{O}$ in $B$ and suppose that $B$ is divisional and let $m$ be a positive integer.
  Then there exists a $2em$-dimensional simple abelian variety $X$ with an isomorphism $B\stackrel{\simeq}{\longrightarrow}\mathrm{End}_{\mathbb{Q}}(X)$ which induces an injective ring homomorphism $\mathcal{O}\hookrightarrow\mathrm{End}(X)$.
 \end{lemma}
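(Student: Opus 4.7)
The strategy is to adapt the construction of Lemma 4.1 (the case $m=1$, \cite[Proposition 2.3]{DH22}), which is itself a specialization of the general moduli construction in \cite[Chapter 9.4]{BL04}. The only modification needed is to work with the rank-$m$ left $B$-module $B^m$ in place of $B$ itself.

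Concretely, I would set up the real vector space $V = B^m \otimes_{\mathbb{Q}} \mathbb{R}$ of real dimension $4em$, on which $B$ acts by left multiplication via the isomorphism $B\otimes_{\mathbb{Q}}\mathbb{R} \cong \prod_{i=1}^{e} M_2(\mathbb{R})$ (using that $B$ is totally indefinite). The lattice is taken to be $\Lambda = \mathcal{O}^m \subset B^m \subset V$, so that left multiplication by $\mathcal{O}$ preserves $\Lambda$ and will furnish the required injection $\mathcal{O} \hookrightarrow \mathrm{End}(X)$ once a compatible complex structure is in place. For the complex structure $J$, I would choose an element of the commutant of the left $B$-action on $V$ squaring to $-\mathrm{id}$; this commutant is isomorphic to $\prod_i M_{2m}(\mathbb{R})$, in which the set of square roots of $-\mathrm{id}$ forms a positive-dimensional real-analytic family. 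To upgrade the resulting complex torus to a polarized abelian variety, I would exploit the positive anti-involution: by Theorem 2.12, $x' = c^{-1}\bar{x}c$ for some $c\in B\setminus F$ with $c^{2}\in F$ totally negative. Using $c$ together with the reduced trace on $B$, one defines a $\mathbb{Q}$-valued alternating form $E$ on $B^m$ which, for suitable $J$, becomes a Riemann form and hence produces a polarization on $X = V/\Lambda$ of dimension $2em$.

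The main obstacle is to arrange that $\mathrm{End}_{\mathbb{Q}}(X)$ equals $B$ and not some strictly larger algebra; by the divisibility criterion in Remark 2.3, this automatically forces $X$ to be simple, and combined with the injection $\mathcal{O}\hookrightarrow \mathrm{End}(X)$ yields the asserted isomorphism $B\xrightarrow{\sim}\mathrm{End}_{\mathbb{Q}}(X)$. Following the moduli-theoretic analysis in \cite[Chapter 9]{BL04}, for each $f \in \mathrm{End}_{\mathbb{Q}}(B^m)\setminus B$ the locus of admissible complex structures $J$ whose $\mathbb{R}$-extension commutes with $f$ is a proper real-analytic subvariety of the parameter space; taking the countable union over a $\mathbb{Q}$-basis of potential extra endomorphisms yields a meager subset. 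Since the parameter space is connected and of positive real dimension for every $m\geq 1$, one can select $J$ in its complement, and the resulting polarized abelian variety has all the required properties.
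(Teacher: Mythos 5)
Your proposal is correct and follows essentially the same route as the paper, which gives no independent proof of this lemma but defers entirely to the moduli construction of \cite[Chapter 9.4]{BL04}: complex structures in the commutant of the left $B$-action on $B^m\otimes_{\mathbb{Q}}\mathbb{R}$, a Riemann form built from the positive anti-involution $x'=c^{-1}\bar{x}c$, and the generic-member argument forcing $\mathrm{End}_{\mathbb{Q}}(X)=B$, whence simplicity via Remark \ref{Poincare remark}.
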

 \begin{lemma}[{cf.\ \cite[Proposition 2.4]{DH22}, \cite[Chapter 9.6]{BL04}}]\label{Construction4}
  Let $K$ be a CM-field with $\left[K:\mathbb{Q}\right]=e=2e_0$ for an integer $e_0\in\mathbb{Z}_{>0}$.
  Let $B$ be a central simple division algebra over $K$ with $[B:K]=d^2$ and $'$ be a positive anti-involution on $B$.\par
  Fix an order $\mathcal{O}$ in $B$ and let $m$ be a positive integer and assume one of the next conditions.
  \begin{enumerate}
   \item $dm\geq3$
   \item $dm=2$ and $e_0\geq2$
  \end{enumerate}
  Then there exists a $d^2e_0m$-dimensional simple abelian variety $X$ with an isomorphism $B\stackrel{\simeq}{\longrightarrow}\mathrm{End}_{\mathbb{Q}}(X)$ which induces an injective ring homomorphism $\mathcal{O}\hookrightarrow\mathrm{End}(X)$.
 \end{lemma}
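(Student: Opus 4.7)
\medskip

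\textbf{Proof plan for Lemma \ref{Construction4}.} The plan is to follow the construction of Type 4 families of abelian varieties in \cite[Chapter 9.6]{BL04}, parallel in spirit to Lemma \ref{Construction} but with the complication that the center $K$ is a CM-field rather than totally real. The goal is to build a complex torus $X$ of dimension $d^{2}e_{0}m$ carrying a left $B$-action through the order $\mathcal{O}$, then polarize it compatibly with $'$, and finally argue that for a sufficiently generic choice $X$ is simple with $\mathrm{End}_{\mathbb{Q}}(X)=B$.

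\emph{Construction of the torus.} First I would fix a CM-type $\Phi=\{\varphi_{1},\ldots,\varphi_{e_{0}}\}$ on $K$ and, for each $\varphi_{i}$, a splitting isomorphism $B\otimes_{K,\varphi_{i}}\mathbb{C}\simeq \mathrm{M}_{d}(\mathbb{C})$. Projection onto the first column of each matrix factor gives a faithful complex representation $\rho_{a}\colon B\hookrightarrow \mathrm{End}_{\mathbb{C}}(V)$ on $V=\mathbb{C}^{d\cdot e_{0}\cdot dm}=\mathbb{C}^{d^{2}e_{0}m}$, where the factor $dm$ accounts for taking $m$ copies of the standard representation together with the internal $d$. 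The lattice is taken as the image of $\mathcal{O}^{\,dm}$ (or more precisely of a free $\mathcal{O}$-module of rank $dm$) under the real embedding $B\otimes_{\mathbb{Q}}\mathbb{R}\hookrightarrow V\oplus\overline{V}$. This produces a complex torus $X=V/\Lambda$ equipped with an injective ring homomorphism $\mathcal{O}\hookrightarrow\mathrm{End}(X)$ extending to a $\mathbb{Q}$-algebra embedding $B\hookrightarrow\mathrm{End}_{\mathbb{Q}}(X)$.

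\emph{Polarization.} Next I would produce a non-degenerate Riemann form $E\colon\Lambda\times\Lambda\to\mathbb{Z}$. The recipe is to choose a nonzero element $t\in B$ satisfying $t'=-t$ and the appropriate positivity condition at each archimedean place coming from $\Phi$, and then set $E(x,y)=\mathrm{Tr}_{B/\mathbb{Q}}\bigl(t\cdot xy'\bigr)$ on $B$ and extend diagonally to the $dm$-fold sum. The positivity of the anti-involution $'$ forces the associated Hermitian form $H$ on $V$ to be positive definite, so $E$ is a polarization and $X$ is an abelian variety. The numerical hypotheses (1) $dm\geq 3$, or (2) $dm=2$ with $e_{0}\geq 2$, enter exactly here: they guarantee that the space of skew elements $t$ with the required positivity profile is nonempty and has enough parameters that a non-degenerate Riemann form exists (in the excluded case $dm=2,\ e_{0}=1$ the relevant symmetric space of polarizations collapses, as recorded in \cite[Chapter 9.6]{BL04}).

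\emph{Simplicity and the endomorphism algebra.} Finally I would verify that for a generic choice in the moduli constructed above, $X$ is simple and $\mathrm{End}_{\mathbb{Q}}(X)=B$. By construction $B\subseteq\mathrm{End}_{\mathbb{Q}}(X)$; on the other hand the possible endomorphism algebras of a simple abelian variety are classified (Table \ref{table}), and one shows using the action of $B$ on $\Lambda\otimes\mathbb{Q}$ that no strictly larger division algebra can act, so the algebra cannot jump. Simplicity follows because $B$ acts irreducibly on $\Lambda\otimes\mathbb{Q}$ (as $B$ is a division algebra acting $K$-linearly) and a generic polarization prevents any proper $B$-stable subtorus from being abelian. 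I expect the main obstacle to be exactly this last step, namely pinning down that the generic member of the family has no extra endomorphisms and no abelian subvariety; this is the point where one must carefully invoke the density and deformation arguments from \cite[Chapter 9.6]{BL04} rather than produce a single explicit example, and where the dichotomy in hypotheses (1)--(2) plays its decisive role.
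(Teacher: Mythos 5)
Your plan follows the same route as the paper, which is to invoke the Type-4 construction from \cite[Chapter 9.6, 9.9]{BL04}: build a complex torus carrying a left $B$-action through $\mathcal{O}$, polarize it compatibly with the positive anti-involution, and observe that the generic member of the resulting family is simple with $\mathrm{End}_\mathbb{Q}(X)=B$. Two things to tighten. First, a rank error: the lattice should be a free $\mathcal{O}$-module of rank $m$, not $dm$. Since $\mathcal{O}$ has $\mathbb{Z}$-rank $[B:\mathbb{Q}]=d^{2}e=2d^{2}e_{0}$, a rank-$m$ module has $\mathbb{Z}$-rank $2d^{2}e_{0}m=2\dim_{\mathbb{C}}V$ as required; $\mathcal{O}^{dm}$ would be too large by a factor of $d$. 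Second, the paper's Remark immediately following the lemma records precisely where the hypotheses enter: conditions (1) and (2) together are equivalent to the existence of CM-type signature data $(r_{v},s_{v})\in\mathbb{Z}_{\geq0}^{2}$ for $1\leq v\leq e_{0}$ satisfying $r_{v}+s_{v}=dm$, $\sum_{v}r_{v}s_{v}\neq0$, and $(r_{v},s_{v})\neq(1,1)$ for some $v$. The first constraint is forced by the $B$-action, the second makes the relevant period domain positive-dimensional so that a generic choice is available, and the third excludes the exceptional profile (all signatures $(1,1)$) in which the generic member's endomorphism algebra is strictly larger than $B$. Your attribution of the failure for $dm=2$, $e_{0}=1$ to a collapse of the polarization space is one valid reading of this (forcing $(r_{1},s_{1})\neq(1,1)$ there makes $\sum_{v}r_{v}s_{v}=0$), but the cleaner and complete statement is the three-part signature criterion, which is what the paper records and what \cite[Chapter 9.9]{BL04} actually checks.
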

 \begin{remark}
  The conditions for $d,e_0,m\in\mathbb{Z}_{>0}$ are come from the existence of $r_v,s_v\in\mathbb{Z}_{\geq0}$ ($1\leq v\leq e_0$) which satisfy the conditions in \cite[Chapter 9.6, Chapter 9.9]{BL04} as below.
  \begin{itemize}
   \item $r_v+s_v=dm$
   \item $\sum_{v=1}^{e_0}r_vs_v\neq0$
   \item $(r_v,s_v)\neq(1,1)$ for some $v$
  \end{itemize}
 \end{remark}

\section{Main Theorem}\label{Main theorem}
 This section is devoted to proving the next theorem which is analogous to Theorem \ref{DH22 theorem}. The proof is along to \cite{DH22}.
 \begin{thm}[{Main Theorem}]\label{Main Theorem}
  Let $P(x)\in\mathbb{Z}[x]$ be an irreducible monic polynomial whose roots are either real or of modulus $1$.
  Assume at least one root has modulus $1$.
  Let $g$ be the degree of the polynomial $P(x)\in\mathbb{Z}[x]$ and $z _1,z _2,\ldots,z_g$ be the roots of $P(x)$ which are ordered as $\abs{z_1}\geq\abs{z_2}\geq\cdots\geq\abs{z_g}$.\par
  Then, there is a simple abelian variety $X$ of dimension $g$ and an automorphism $f\colon X\longrightarrow X$ with dynamical degrees
  \begin{center}
   $\lambda_0(f)=1, \lambda_k(f)=\prod_{i=1}^{k} \abs{z_i}^2\,(1\leq k\leq g)$.
  \end{center}
 \end{thm}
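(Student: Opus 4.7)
The plan is to generalize the construction of DH22, originally applied to Salem polynomials, to the wider class of polynomials in the statement. First observe that $P(x)$ is automatically a reciprocal polynomial: since some root $z$ has $|z|=1$, its complex conjugate $\bar z = 1/z$ is also a root of $P$, and transitivity of the Galois action on the roots then forces $1/w$ to be a root for every root $w$ of $P$. In particular the constant term of $P$ equals $\pm 1$ and, for $g\geq 2$, irreducibility forces $g$ to be even (otherwise $P(\pm 1)=0$). The case $g=1$ is immediate: $P(x) = x\mp 1$, and any elliptic curve $E$ with $f = \pm\id_E$ realizes $\lambda_0(f) = \lambda_1(f) = 1$. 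Henceforth assume $g \geq 2$ is even. Fix a root $\alpha$ of $P$ and let $L = \mathbb{Q}(\alpha)$. The map $\alpha\mapsto\alpha^{-1}$ extends to an order-two automorphism $\sigma$ of $L$ (well-defined because $\alpha^{-1}$ is also a root of $P$); its fixed field $L_0 = \mathbb{Q}(\alpha+\alpha^{-1})$ is totally real of degree $g/2$, since every conjugate of $t := \alpha + \alpha^{-1}$ has the form $z_i + z_i^{-1}\in\mathbb{R}$. Setting $a := t^2 - 4 \in L_0$, we have $L = L_0(\sqrt{a})$, with $\tau(a)>0$ at real embeddings $\tau$ of $L_0$ whose two extensions to $L$ pick out real conjugates of $\alpha$, and $\tau(a)<0$ at those whose extensions pick out unit-modulus conjugates.

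The crucial step is to choose a rational prime $q>0$ and form the quaternion algebra $B = \left(\tfrac{a,\,q}{L_0}\right)$ so that $B$ is simultaneously totally indefinite and a division algebra. Total indefiniteness is automatic from $q>0$: at each real embedding $\tau$ of $L_0$, either $\tau(a)>0$ (so $a$ is a square in $\mathbb{R}$) or $\tau(q)=q>0$, and in both cases $B\otimes_\tau\mathbb{R}\simeq\mathrm{M}_2(\mathbb{R})$. For divisionality I would apply \v{C}ebotarev's density theorem (Theorem \ref{Cebotarev's density theorem}) to the quadratic Galois extension $L/L_0$: the primes $\mathfrak{q}$ of $L_0$ that are unramified in $L$ and have non-trivial Frobenius form a set of density $1/2$, and these are precisely the primes of $L_0$ inert in $L$. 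Pick such a $\mathfrak{q}$ lying over a rational prime $q>0$ that is unramified in $L/\mathbb{Q}$ (only finitely many are ramified). Then $L_\mathfrak{q}/L_{0,\mathfrak{q}}$ is an unramified quadratic extension of local fields and $v_\mathfrak{q}(q)=1$ is odd, so $q\notin N_{L_\mathfrak{q}/L_{0,\mathfrak{q}}}(L_\mathfrak{q}^\times)$; consequently the Hilbert symbol $(a,q)_\mathfrak{q}$ equals $-1$, the local algebra $B\otimes_{L_0}L_{0,\mathfrak{q}}$ is the unique division quaternion algebra over $L_{0,\mathfrak{q}}$, and $B$ itself is division.

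With $B$ in hand, pick a maximal $\mathbb{Z}$-order $\mathcal{O}\subset B$ containing $\mathcal{O}_L$; both $\alpha$ and $\alpha^{-1}$ lie in $\mathcal{O}$ because the reciprocity of $P$ makes them integral over $\mathbb{Z}$. Lemma \ref{Construction} applied to $B$ and $\mathcal{O}$ produces a simple abelian variety $X$ of dimension $2\cdot[L_0:\mathbb{Q}] = g$ with an injection $\mathcal{O}\hookrightarrow\mathrm{End}(X)$, and the image of $\alpha$ defines an automorphism $f$ of $X$ (since $\alpha^{-1}\in\mathrm{End}(X)$ as well). Finally the Type 2 analysis of Section \ref{Calculation} gives
\[
 \prod_{i=1}^g(x-\rho_i)(x-\overline{\rho_i}) = P(x)^{2g/d''} = P(x)^2,
\]
using $d'' = \deg_{\mathbb{Q}}\alpha = g$, so every root of $P$ appears with multiplicity exactly $2$ in the multiset $\{\rho_i,\overline{\rho_i}\}_{i=1}^g$. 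Taking absolute values and using $|\rho_i|=|\overline{\rho_i}|$ identifies the multiset $\{|\rho_i|\}_{i=1}^g$ with $\{|z_i|\}_{i=1}^g$; reordering by decreasing modulus delivers $\lambda_k(f) = \prod_{i=1}^k|\rho_i|^2 = \prod_{i=1}^k|z_i|^2$ as required.

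The principal technical obstacle is the \v{C}ebotarev step: one must arrange for $B$ to be both totally indefinite and division. Positivity of $q$ secures the former, whereas the density theorem, by producing a prime of $L_0$ inert in $L$ of odd ramification index over its underlying rational prime, is exactly what forces the Hilbert symbol $(a,q)_\mathfrak{q}$ to be $-1$ and thus secures the latter.
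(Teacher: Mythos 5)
Your overall architecture matches the paper's: pass to $F=\mathbb{Q}(\gamma+\gamma^{-1})$, set $a=(\gamma-\gamma^{-1})^2$, form the quaternion algebra $\left(\frac{a,q}{F}\right)$ with $q$ a positive prime chosen so that the algebra is division, embed $\gamma$ in an order, apply Lemma \ref{Construction}, and read off the dynamical degrees from the Type 2 computation $\prod_i(x-\rho_i)(x-\overline{\rho_i})=P(x)^2$. Your \v{C}ebotarev/Hilbert-symbol argument for divisionality is correct and is essentially a re-proof of Theorem \ref{divisional} ([DH22, Theorem 2.5]), which the paper simply cites; no objection there.

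However, there is a genuine gap: Lemma \ref{Construction} takes as input not just the totally indefinite division algebra $B$ and an order $\mathcal{O}$, but also a \emph{positive anti-involution} $'$ on $B$ over $\mathbb{Q}$ (this is the Rosati datum needed to build the polarized abelian variety via [BL04, Chapter 9.4]). You invoke the lemma without supplying or verifying this hypothesis. This is precisely the step the paper isolates as Lemma \ref{construct positive anti-involution} and regards as the nontrivial new content of the argument: by Theorem \ref{construction of positive anti-involution} one must exhibit $c=xi+yj+zij\in B\setminus F$ with $c^2=ax^2+qy^2-aqz^2\in F$ \emph{totally negative}, which is delicate here because $a=(\gamma-\gamma^{-1})^2$ is positive at the real embeddings of $F$ corresponding to real conjugates of $\gamma$ and negative at those corresponding to unit-modulus conjugates, so no naive choice of $(x,y,z)$ works uniformly. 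The paper's proof requires a careful choice of $x,y,z$ depending on a parameter $n$, an equidistribution lemma (Lemma \ref{analogous to Kronecker's Density Theorem}) to control the unit-modulus conjugates, and a Pell-equation argument to make the remaining constant term dominate. Without this (or an explicit citation of a general existence result for positive involutions on totally indefinite quaternion division algebras), the construction of $X$ does not go through as written. The remainder of your argument — integrality of $\gamma^{\pm1}$, the order, and the eigenvalue bookkeeping — is fine.
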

 \begin{remark}
  If $g=1$, then either $P(x)=x+1$ or $P(x)=x-1$ holds and there exists an $1$-dimensional simple abelian variety $X$ with the identity map $\id_X$ which satisfies $\lambda_0(\id_X)=\lambda_1(\id_X)=1$.
  Thus, the case $g\geq2$ is worth considering.\par
  For the case $g\geq2$, let $\gamma\neq1,-1$ be a root of $P(x)$ whose absolute value is $1$.
  Then $\gamma^{-1}=\overline{\gamma}$ is also a root of $P(x)$.
  This implies that the minimal polynomial $P(x)=a_gx^g+a_{g-1}x^{g-1}+\cdots+a_1x+a_0\,(a_g=1)$ satsfies $a_i=a_{g-i}\,(0\leq i\leq g)$.
  In particular, $a_0=1$ and thus $\lambda_g(f)=1$ and this is compatible with the automorphicity.
 \end{remark}
 For proving Theorem \ref{Main Theorem}, assume $g\geq2$ and take $\gamma$ as in the above remark.
 Then, $K=\mathbb{Q}(\gamma)$ be a quadratic extension of the totally real number field $F=\mathbb{Q}(\gamma+\gamma^{-1})$ with $[F\colon \mathbb{Q}]=\frac{g}{2}$.
 \begin{lemma}[{\cite[Lemma 2.11]{DH22}}]\label{quadratic extension}
  There exists an algebraic integer $a\in\mathcal{O}_F$ such that $F(\sqrt{a})=K$.
 \end{lemma}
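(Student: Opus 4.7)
The plan is to exhibit an explicit element $a\in\mathcal{O}_F$ and then check directly that $F(\sqrt{a})=K$. The natural candidate is
\[
 a:=(\gamma-\gamma^{-1})^2=(\gamma+\gamma^{-1})^2-4,
\]
since this is exactly the discriminant-type quantity arising when we write $\gamma$ as a root of its degree-two minimal polynomial over $F$, namely $T^2-(\gamma+\gamma^{-1})T+1$.

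First I would verify that $\gamma^{-1}$ is itself an algebraic integer. Because $|\gamma|=1$ and $P(x)\in\mathbb{Z}[x]$ has real coefficients, we have $\gamma^{-1}=\bar\gamma$, and $\bar\gamma$ is a root of $P(x)$; hence $\gamma^{-1}\in\mathcal{O}_K$. Consequently $\gamma+\gamma^{-1}$ and $\gamma-\gamma^{-1}$ both lie in $\mathcal{O}_K$, so $a=(\gamma-\gamma^{-1})^2\in\mathcal{O}_K$.

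Next I would check that $a$ actually lies in $F$. The quadratic extension $K/F$ has a non-trivial automorphism $\sigma$ which, on the generator, sends $\gamma\mapsto\gamma^{-1}$ (the two roots of $T^2-(\gamma+\gamma^{-1})T+1$). Clearly $\sigma(a)=(\gamma^{-1}-\gamma)^2=a$, so $a\in K^{\sigma}=F$. Combined with the previous step, $a\in F\cap\mathcal{O}_K=\mathcal{O}_F$, as desired.

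Finally I would show $F(\sqrt{a})=K$. By construction $\sqrt{a}=\pm(\gamma-\gamma^{-1})\in K$, so $F(\sqrt{a})\subseteq K$. For the reverse inclusion it suffices to verify $\sqrt{a}\notin F$, i.e.\ that $\gamma-\gamma^{-1}\neq 0$: this holds because $\gamma\neq\pm 1$ (otherwise the minimal polynomial of $\gamma$ would be linear, contradicting $g\geq 2$). Since $[K:F]=2$, the strict inclusion $F\subsetneq F(\sqrt a)\subseteq K$ forces equality. There is essentially no obstacle in this argument; the only subtlety is ensuring $a$ is an integer of $F$ rather than merely an element of $F$, which is handled by the explicit polynomial expression in the algebraic integers $\gamma,\gamma^{-1}$.
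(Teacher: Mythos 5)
Your proof is correct and takes essentially the same route as the paper, whose remark following the lemma takes exactly $a=\left(\gamma-\frac{1}{\gamma}\right)^2$ (the lemma itself is only cited from \cite{DH22}). One small point of precision: $\sqrt{a}\notin F$ is not literally equivalent to $\gamma-\gamma^{-1}\neq 0$; the missing half-line is that $\gamma-\gamma^{-1}=\gamma-\overline{\gamma}$ is purely imaginary and nonzero, while $F$ is totally real and hence contained in $\mathbb{R}$ (equivalently, $\gamma-\gamma^{-1}\in F$ together with $\gamma+\gamma^{-1}\in F$ would force $\gamma\in F$, contradicting $[K:F]=2$).
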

 \begin{remark}
  $K=F\left(\gamma-\frac{1}{\gamma}\right)$ and so $a$ can be taken as $a=\left(\gamma-\frac{1}{\gamma}\right)^2$ and we adopt this notation.
 \end{remark}
 On this condition, the next theorem can be applied.
 \begin{thm}[{\cite[Theorem 2.5]{DH22}}]\label{divisional}
  Let $F$ be a totally real number field and $K=F(\sqrt{a})$ a quadratic extension for $a\in\mathcal{O}_F$.
  Then there exists a prime number $p$ such that the quaternion algebra $B=\left(\frac{a,p}{F}\right)$ is divisional.
 \end{thm}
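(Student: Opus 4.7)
The plan is to exhibit a rational prime $p$ at which $B=\left(\frac{a,p}{F}\right)$ becomes nonsplit at some finite place of $F$, and then conclude that $B$ is a division algebra from the fact that every four-dimensional central simple algebra over a field is either isomorphic to $M_2$ or is a quaternion division algebra.

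First, since $K=F(\sqrt{a})$ is a Galois quadratic extension of $F$, I would apply Chebotarev's density theorem (Theorem \ref{Cebotarev's density theorem}) to the extension $K/F$: the set of primes of $F$ whose Frobenius equals the nontrivial element of $\mathrm{Gal}(K/F)$ has density $1/2$, so there are infinitely many prime ideals $\mathfrak{p}\subset\mathcal{O}_F$ that are inert in $K$. Each such $\mathfrak{p}$ is in particular unramified in $K/F$, so the completion $K_\mathfrak{p}=F_\mathfrak{p}(\sqrt{a})$ is the unique unramified quadratic extension of the local field $F_\mathfrak{p}$. Since only finitely many rational primes ramify in $F/\mathbb{Q}$, I may further restrict to an inert $\mathfrak{p}$ whose residue characteristic $p:=\mathfrak{p}\cap\mathbb{Z}$ is unramified in $F$, ensuring $v_\mathfrak{p}(p)=1$.

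Next I would invoke the classical splitting criterion for quaternion algebras: $(a,p/F_\mathfrak{p})$ is split if and only if $p$ lies in the image of the norm map $N_{K_\mathfrak{p}/F_\mathfrak{p}}$. Because $K_\mathfrak{p}/F_\mathfrak{p}$ is unramified of degree $2$, a uniformizer $\pi$ of $F_\mathfrak{p}$ remains a uniformizer of $K_\mathfrak{p}$ and satisfies $N_{K_\mathfrak{p}/F_\mathfrak{p}}(\pi)=\pi^2$, while the norm is surjective on units; together this forces every element of $N_{K_\mathfrak{p}/F_\mathfrak{p}}(K_\mathfrak{p}^\times)$ to have even $\mathfrak{p}$-adic valuation. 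Since $v_\mathfrak{p}(p)=1$ is odd, $p$ is not a local norm, so $B\otimes_F F_\mathfrak{p}$ is the (unique) quaternion division algebra over $F_\mathfrak{p}$.

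Finally, if $B$ were isomorphic to $M_2(F)$, its completion at $\mathfrak{p}$ would be $M_2(F_\mathfrak{p})$, contradicting the previous step. Hence $B\not\simeq M_2(F)$, and since a four-dimensional central simple $F$-algebra is either $M_2(F)$ or a quaternion division algebra, $B$ must be a division algebra. The one technical point that requires care is the identification of the local norm group with the elements of even $\mathfrak{p}$-adic valuation; apart from this, the argument is a direct combination of Chebotarev and the quaternion splitting criterion.
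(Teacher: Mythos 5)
Your argument is correct. The paper does not actually reprove this statement—it is imported verbatim from \cite[Theorem 2.5]{DH22}—but your proof is the standard local–global one (and essentially the one in that reference): \v{C}ebotarev gives infinitely many primes of $F$ inert in $K$, among which one can choose $\mathfrak{p}$ with residue characteristic $p$ unramified in $F$, so that $v_{\mathfrak{p}}(p)=1$ is odd while $N_{K_{\mathfrak{p}}/F_{\mathfrak{p}}}(K_{\mathfrak{p}}^{\times})$ consists precisely of the elements of even valuation; hence $B\otimes_F F_{\mathfrak{p}}$ is division and therefore $B$ is as well.
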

 Since $p\in\mathbb{Z}_{>0}$, $B=\left(\frac{a,p}{F}\right)$ is totally indefinite.
 In order to apply Lemma \ref{Construction} to $\left(\frac{a,p}{F}\right)$, we should construct the order of $\left(\frac{a,p}{F}\right)$. 
 Take an $F$-basis $1,i,j,ij$ of $\left(\frac{a,p}{F}\right)$ with $i^2=a$, $j^2=p$ and $ij=-ji$.
 By the embedding $K=F(\sqrt{a})\hookrightarrow \left(\frac{a,p}{F}\right)$, there is a $\mathbb{Z}$-order $\mathcal{O}:=\mathcal{O}_K\oplus\mathcal{O}_K j$ in $\left(\frac{a,p}{F}\right)$ since $\mathcal{O}_K$ is a finitely generated $\mathbb{Z}$-submodule of $K$ which spans $K$ over $\mathbb{Q}$ and $j^2=p$ gives $\mathcal{O}$ is closed under the multiplication.
 Also, the next lemma holds.
 \begin{lemma}\label{construct positive anti-involution}
  There exists a positive anti-involution on $B=\left(\frac{a,p}{F}\right)$ over $\mathbb{Q}$.
 \end{lemma}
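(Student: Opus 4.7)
The plan is to invoke Theorem \ref{construction of positive anti-involution} (in which the ``$K$'' of that theorem denotes the center of the quaternion algebra, corresponding here to our $F$): it will suffice to exhibit some $c \in B \setminus F$ with $c^2 \in F$ totally negative, for then $\phi(x) := c^{-1}\bar{x}c$ is a positive anti-involution on $B$ over $\mathbb{Q}$.

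First I would record the signs of the real conjugates of $a = (\gamma - \gamma^{-1})^2$. If $\sigma \colon F \hookrightarrow \mathbb{R}$ is the real embedding corresponding to a Galois conjugate $\gamma_\sigma$ of $\gamma$, then $\sigma(a) = (\gamma_\sigma - \gamma_\sigma^{-1})^2$: this is strictly negative when $|\gamma_\sigma| = 1$ (so that $\gamma_\sigma - \gamma_\sigma^{-1}$ is nonzero and purely imaginary) and strictly positive when $\gamma_\sigma$ is a real root of $P$ (by irreducibility of $P$ with $g \geq 2$, such a $\gamma_\sigma$ is automatically different from $\pm 1$). Thus $a$ is totally negative precisely when every root of $P$ has modulus $1$, i.e.\ when $K$ is a CM-field; in general $\sigma(a)$ may take both signs.

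Next I would take the ansatz $c = \alpha i + ij$ with $\alpha \in F$ to be chosen. Using $i^2 = a$, $j^2 = p$, and $ij = -ji$, a direct calculation gives $c^2 = a(\alpha^2 - p) \in F$, and clearly $c \notin F$. The requirement $\sigma(c^2) < 0$ for every real embedding $\sigma$ of $F$ reduces to $\sigma(\alpha)^2 > p$ whenever $\sigma(a) < 0$ and $\sigma(\alpha)^2 < p$ whenever $\sigma(a) > 0$; the case $\sigma(a) = 0$ cannot arise because $K = F(\sqrt{a})$ is a proper quadratic extension of $F$. Since the diagonal image of $F$ is dense in $F \otimes_{\mathbb{Q}} \mathbb{R} \cong \prod_\sigma \mathbb{R}$, weak approximation produces such an $\alpha \in F$; applying the converse direction of Theorem \ref{construction of positive anti-involution} (the full classification recorded in \cite[Theorem 5.5.3]{BL04}) then yields the desired positive anti-involution.

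The main obstacle is the non-CM case, where $\sigma(a)$ takes both positive and negative values: then neither $i$ nor $ij$, nor any element of the subfield $K$ alone, produces a totally negative square (an element $c \in K \setminus F$ necessarily has $c^2 \in F \cdot a$, whose sign pattern matches that of $a$), and one genuinely needs the mixed element $\alpha i + ij$ whose existence requires weak approximation on $F$.
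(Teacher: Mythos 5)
Your proposal is correct, and it reaches the conclusion by a genuinely different and considerably shorter route than the paper. Both arguments make the same reduction: by the converse direction of Theorem \ref{construction of positive anti-involution} (i.e.\ the full ``if and only if'' of \cite[Theorem 5.5.3]{BL04}, which you rightly flag as the version being used), it suffices to produce $c\in B\setminus F$ with $c^2\in F$ totally negative. The paper takes the general element $c=xi+yj+zij$, so that $c^2=x^2a+y^2p-z^2ap$, and achieves total negativity only after a delicate substitution of $x,y,z$ by symmetric expressions in powers of $\gamma$, an equidistribution argument (Lemma \ref{analogous to Kronecker's Density Theorem}) to control the conjugates at the unimodular roots, and a Pell equation to cancel the dominant term. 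You instead drop the $j$-component: with $c=\alpha i+ij$ the cross terms $i\cdot ij$ and $ij\cdot i$ cancel, giving $c^2=a(\alpha^2-p)$, so the problem becomes purely a sign condition at each real place of $F$ --- namely that $\sigma(\alpha)^2-p$ be opposite in sign to $\sigma(a)=(\gamma_\sigma-\gamma_\sigma^{-1})^2$, which you correctly identify as negative at unimodular conjugates, positive at real ones, and never zero since $\gamma_\sigma\neq\pm1$. The target set is a nonempty open subset of $\prod_\sigma\mathbb{R}$, so weak approximation (density of $F$ in $F\otimes_{\mathbb{Q}}\mathbb{R}$) supplies $\alpha$. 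Your approach trades the paper's equidistribution and Pell-equation machinery for one standard appeal to weak approximation, and your closing remark correctly explains why the two-dimensional ansatz (rather than $i$, $ij$, or an element of $K$ alone) is genuinely needed in the non-CM case.
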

 \begin{remark}
  In \cite{DH22}, this part is explained in the proof of \cite[Lemma 2.12]{DH22}, but we could not understand this part, so we add the proof here.
 \end{remark}
 \begin{proof}[Proof of Lemma \ref{construct positive anti-involution}]
  By Theorem \ref{construction of positive anti-involution}, it suffices to show that there exists $c\in B\backslash F$ such that $c^2\in F$ is totally negative.
  For searching $c$, denote $c=xi+yj+zij$ ($x,y,z\in F$) where $1,i,j,ij$ is an $F$-basis of $B$.
  By calculating, $c^2=x^2a+y^2p-z^2ap\in F$ and substitute
  \begin{align*}
   x=pX\left(\gamma^n+\frac{1}{\gamma^n}\right),\ y=Y\left(\gamma^{n+1}+\frac{1}{\gamma^{n+1}}\right),\ z=Z\left(\frac{1}{\gamma}\right)^n\sum_{i=0}^n\gamma^{2i}
  \end{align*}
  where $X,Y,Z\in\mathbb{Z}_{>0}$ and $n\in\mathbb{Z}_{>0}$.
  Then,
  \begin{align*}
   &c^2=x^2a+y^2p-z^2ap\\
   &=p^2X^2\left(\gamma^n+\frac{1}{\gamma^n}\right)^2\left(\gamma-\frac{1}{\gamma}\right)^2+pY^2\left(\gamma^{n+1}+\frac{1}{\gamma^{n+1}}\right)^2-pZ^2\left(\left(\frac{1}{\gamma}\right)^n\sum_{i=0}^n\gamma^{2i}\right)^2\left(\gamma-\frac{1}{\gamma}\right)^2\\
   &=p^2X^2\left(\gamma^{2n+2}+\frac{1}{\gamma^{2n+2}}-2\left(\gamma^{2n}+\frac{1}{\gamma^{2n}}\right)+\left(\gamma^{2n-2}+\frac{1}{\gamma^{2n-2}}\right)+2\left(\gamma^2+\frac{1}{\gamma^2}\right)-4\right)\\
   &\qquad+pY^2\left(\gamma^{2n+2}+\frac{1}{\gamma^{2n+2}}+2\right)-pZ^2\left(\gamma^{2n+2}+\frac{1}{\gamma^{2n+2}}-2\right).
  \end{align*}
  Thus,
  \begin{align*}
   \frac{c^2}{p}=&(X^2p+Y^2-Z^2)\left(\gamma^{2n+2}+\frac{1}{\gamma^{2n+2}}\right)\\
   &\quad-X^2p\left(2\left(\gamma^{2n}+\frac{1}{\gamma^{2n}}\right)-\left(\gamma^{2n-2}+\frac{1}{\gamma^{2n-2}}\right)-2\left(\gamma^2+\frac{1}{\gamma^2}\right)\right)\\
   &\quad\quad\quad+(-4X^2p+2Y^2+2Z^2)
  \end{align*}
  and a conjugate of $\frac{c^2}{p}$ can be written as
  \begin{align*}
   &(X^2p+Y^2-Z^2)\left(\tau^{2n+2}+\frac{1}{\tau^{2n+2}}\right)\\
   &-X^2p\left(2\left(\tau^{2n}+\frac{1}{\tau^{2n}}\right)-\left(\tau^{2n-2}+\frac{1}{\tau^{2n-2}}\right)-2\left(\tau^2+\frac{1}{\tau^2}\right)\right)+(-4X^2p+2Y^2+2Z^2)\tag*{($\ast$)}
  \end{align*}
  where $\tau$ is a conjugate of $\gamma$. Define
  \begin{align*}
   &S:=\{\text{conjugates of }\gamma\text{ whose modulus is }1\}\\
   &T:=\{\text{conjugates of }\gamma\text{ which is real}\}
  \end{align*}
  with $S\cap T=\emptyset$ since $P(x)\neq x+1,x-1$.\par
  For any real number $\tau\neq\pm1$,
  \begin{align*}
   2\left(\tau^{2N}+\frac{1}{\tau^{2N}}\right)-\left(\tau^{2N-2}+\frac{1}{\tau^{2N-2}}\right)-2\left(\tau^2+\frac{1}{\tau^2}\right)
  \end{align*}
  diverges to $+\infty$ as $N\rightarrow+\infty$.
  Thus, there exists a sufficiently large $N_0\in\mathbb{Z}_{>0}$ such that for any integer $N\geq N_0$,
  \begin{align*}
   2\left(\tau^{2N}+\frac{1}{\tau^{2N}}\right)-\left(\tau^{2N-2}+\frac{1}{\tau^{2N-2}}\right)-2\left(\tau^2+\frac{1}{\tau^2}\right)>-2
  \end{align*}
  holds for all $\tau\in T$.
  Define 
  \begin{align*}
   \alpha:=\underset{\tau\in S}{\mathrm{max}}\ \left(\tau^2+\frac{1}{\tau^2}\right).
  \end{align*}
  Now, by $-1,1\notin S$, $\alpha<2$.
  By applying Lemma \ref{analogous to Kronecker's Density Theorem} which is proved below, for any small neighborhood of $1$, there are infinitely many $n$ such that all $\tau^n$ ($\tau\in T$) can be inside the neighborhood.
  Thus, there exists $N\geq N_0$ such that
  \begin{align*}
   2\left(\tau^{2N}+\frac{1}{\tau^{2N}}\right)>2\alpha
  \end{align*}
  and then
  \begin{align*}
   2\left(\tau^{2N}+\frac{1}{\tau^{2N}}\right)-\left(\tau^{2N-2}+\frac{1}{\tau^{2N-2}}\right)-2\left(\tau^2+\frac{1}{\tau^2}\right)>2\alpha-2-2\alpha=-2
  \end{align*}
  for all $\tau\in S$.
  Thus, there exists $\epsilon>0$ such that
  \begin{align*}
   2\left(\tau^{2N}+\frac{1}{\tau^{2N}}\right)-\left(\tau^{2N-2}+\frac{1}{\tau^{2N-2}}\right)-2\left(\tau^2+\frac{1}{\tau^2}\right)>-2+\epsilon
  \end{align*}
  for all $\tau\in S\cup T$.
  For this $N$, ($\ast$) is less than
  \begin{align*}
   (X^2p+Y^2-Z^2)\left(\tau^{2N+2}+\frac{1}{\tau^{2N+2}}\right)+(2-\epsilon)X^2p+(-4X^2p+2Y^2+2Z^2)\\
   =(X^2p+Y^2-Z^2)\left(\tau^{2N+2}+\frac{1}{\tau^{2N+2}}\right)+(-(2+\epsilon)X^2p+2Y^2+2Z^2).\tag*{($\ast\ast$)}
  \end{align*}
  By substituting $Y=1$, there exist infinitely many solutions for the equation
  \begin{align*}
   1=Z^2-X^2p\quad(X,Z\in\mathbb{Z})
  \end{align*}
  since this is a Pell's equation.
  By substituting a solution for this equation,
  \begin{align*}
   (\ast\ast)=-(2+\epsilon)X^2p+2Y^2+2Z^2=-\epsilon X^2p+4
  \end{align*}
  and as $\abs{X}\to\infty$, ($\ast\ast$) can be nagative for all $\tau\in S\cup T$ and so for these $X,Y,Z$ and $N$, $c^2\in F$ is totally negative.
  Now the proof is concluded by proving the next lemma.
 \begin{lemma}\label{analogous to Kronecker's Density Theorem}
  Let $M$ be a finite set of complex numbers whose modulus is $1$ and let $\epsilon>0$ be an arbitrary small positive number.\par
  Then, there are infinitely many $n\in\mathbb{Z}_{>0}$ such that $\abs{1-z^n}<\epsilon$ for all $z\in M$.
 \end{lemma}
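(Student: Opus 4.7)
The plan is to prove this as a simultaneous Diophantine approximation statement on the torus, via a Dirichlet-style pigeonhole argument. Write $M=\{z_1,\ldots,z_k\}$ and pick $\theta_j\in[0,1)$ with $z_j=e^{2\pi i\theta_j}$. Using the elementary bound $|1-e^{2\pi it}|=2|\sin(\pi t)|\leq 2\pi\cdot\mathrm{dist}(t,\mathbb{Z})$, the problem is reduced to producing infinitely many $n\in\mathbb{Z}_{>0}$ for which $\mathrm{dist}(n\theta_j,\mathbb{Z})<\epsilon/(2\pi)$ simultaneously for every $j=1,\ldots,k$.

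The first step is to produce just one such $n$. Fix an integer $N$ with $2\pi/N<\epsilon$, partition the torus $(\mathbb{R}/\mathbb{Z})^k$ into $N^k$ half-open cubes of side $1/N$, and consider the $N^k+1$ points $(m\theta_1,\ldots,m\theta_k)\bmod\mathbb{Z}^k$ for $m=0,1,\ldots,N^k$. By pigeonhole, two of these, say at indices $a<b$, lie in the same cube; setting $n:=b-a\in\{1,\ldots,N^k\}$ gives $\mathrm{dist}(n\theta_j,\mathbb{Z})<1/N$ for every $j$, whence $|1-z_j^n|<\epsilon$ for every $j\in\{1,\ldots,k\}$.

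To upgrade this single $n$ to infinitely many, I would iterate the pigeonhole argument with shrinking tolerances $\epsilon_m:=\epsilon/2^m$, producing positive integers $n_m$ with $|1-z_j^{n_m}|<\epsilon_m$ for all $j$. A clean dichotomy then finishes the proof: either the sequence $(n_m)$ is unbounded, in which case it already furnishes infinitely many admissible $n$; or some value $n^\star$ is attained for infinitely many $m$, which forces $z_j^{n^\star}=1$ for every $j$, and in that case all positive multiples of $n^\star$ work because $|1-z_j^{\ell n^\star}|=0<\epsilon$ for every $\ell\geq 1$.

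There is no serious obstacle in this argument, since it is essentially the classical multi-dimensional Dirichlet approximation theorem (or, equivalently, the fact that every orbit of a translation on a compact abelian group returns arbitrarily close to its starting point). The only small point requiring care is the passage from a single $n$ to infinitely many, handled by the dichotomy above; the rest is a direct pigeonhole estimate.
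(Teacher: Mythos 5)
Your proof is correct and follows essentially the same route as the paper: both reduce the statement to simultaneous approximation on the $k$-dimensional torus and apply the pigeonhole principle to a partition into $N^k$ small cubes. The only difference lies in how a single admissible $n$ is upgraded to infinitely many --- the paper reruns the pigeonhole argument along arithmetic progressions of indices (forcing the resulting difference to be at least $n_0$), whereas you shrink the tolerance to $\epsilon/2^m$ and invoke a dichotomy; both are valid.
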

 This lemma is reduced to the next lemma via the isomorphism
 \begin{align*}
  \mathbb{R}/\mathbb{Z}\simeq S^1:=\{z\in\mathbb{C}\mid\abs{z}=1\}.
 \end{align*}
 \begin{lemma}
  Take $r_1,r_2,\ldots,r_m\in\mathbb{R}$ and fix $\epsilon>0$ arbitrary.\par
  Then, there are infinitely many $n\in\mathbb{Z}_{>0}$ such that either $\{nr_i\}<\epsilon$ or $\{nr_i\}>1-\epsilon$ holds for each $1\leq i\leq m$.
  Here, $\{x\}$ represents the decimal part of $x\in\mathbb{R}$ and so $\{x\}=x-\lfloor x\rfloor$.
 \end{lemma}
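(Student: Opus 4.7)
The plan is to re-express the condition on $\{nr_i\}$ as a statement about the compact torus $T:=(\mathbb{R}/\mathbb{Z})^m$ and then combine a pigeonhole argument with a diagonal trick to produce infinitely many solutions. First I would note that ``$\{nr_i\}<\epsilon$ or $\{nr_i\}>1-\epsilon$'' is equivalent to saying that the image $v_n:=(nr_1,\ldots,nr_m)\bmod\mathbb{Z}^m$ lies within distance $\epsilon$ of the origin, when $T$ is equipped with the usual metric $d$ induced from the sup-norm on $[0,1)^m$. Writing $A_\epsilon:=\{n\in\mathbb{Z}_{>0}\mid d(v_n,0)<\epsilon\}$, the goal becomes to show $A_\epsilon$ is infinite for every $\epsilon>0$.

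For the pigeonhole step, I would fix an auxiliary $\delta>0$, subdivide $T$ into at most $\lceil 1/\delta\rceil^m$ boxes of side $\delta$, and consider $v_0,v_1,\ldots,v_N$ for $N$ strictly larger than the number of boxes. Two of these iterates, say $v_{n_1}$ and $v_{n_2}$ with $0\leq n_1<n_2\leq N$, must land in the same box, giving $d(v_{n_2}-v_{n_1},0)<\delta$. Since the assignment $n\mapsto v_n$ is a group homomorphism $\mathbb{Z}\to T$, this yields $d(v_{n_2-n_1},0)<\delta$, so $n_2-n_1\in A_\delta$.

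To produce infinitely many elements in $A_\epsilon$, I would apply the previous step with $\delta=\epsilon/k$ for each $k\geq 1$, producing positive integers $n^{(k)}\in A_{\epsilon/k}\subseteq A_\epsilon$. If infinitely many of these are distinct, the proof is complete. Otherwise some fixed integer $n_0$ occurs as $n^{(k)}$ for infinitely many values of $k$, which forces $d(v_{n_0},0)<\epsilon/k$ for all such $k$ and hence $v_{n_0}=0$ in $T$; but then every positive multiple of $n_0$ maps to $0$ under $n\mapsto v_n$ and so lies in $A_\epsilon$, yielding the desired infinitude at once.

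The only delicate point is the final diagonal step, which must handle the possibility that the raw pigeonhole output repeats the same $n$ many times; the clean resolution is the observation that such repetition forces some $v_{n_0}$ to vanish exactly, after which the arithmetic progression $kn_0$ works. Beyond this the argument is entirely elementary---essentially the multi-dimensional pigeonhole principle for translations on a torus---so I do not anticipate any substantive obstacle.
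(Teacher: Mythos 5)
Your argument is correct, and its core --- subdividing $[0,1)^m$ into boxes of side less than $\epsilon$, pigeonholing the orbit points $v_0,\dots,v_N$, and taking the difference of two indices landing in the same box --- is exactly the paper's argument. Where you diverge is in upgrading ``there exists one such $n$'' to ``there exist infinitely many'': the paper reruns the pigeonhole on the subsequence of indices $1,\,n_0+1,\,2n_0+1,\dots$, so that the difference of any two colliding indices is automatically a multiple of $n_0$ and hence at least $n_0$, producing solutions of unbounded size directly. You instead shrink the tolerance to $\epsilon/k$ and handle the degenerate case where the pigeonhole keeps returning the same $n_0$, observing that this forces $v_{n_0}=0$ exactly, after which the whole progression $kn_0$ consists of solutions. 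Both finishes are valid and elementary; the paper's is slightly more economical because it avoids the case split, while yours has the minor bonus of exhibiting an explicit arithmetic progression of solutions in the degenerate case. Either way the lemma is established.
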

 \begin{proof}
  Define
  \begin{align*}
   A:=\{(\{nr_1\},\{nr_2\},\ldots,\{nr_m\})\in[0,1)^m\}_{n\in\mathbb{Z}_{>0}}
  \end{align*}
  as a subset of $[0,1)^m$.
  Let $N\in\mathbb{Z}_{>0}$ be an integer which satisfies $\frac{1}{N}<\epsilon$.
  Cut the hypercube $[0,1)^m$ into $N^m$ pieces of hypercubes whose length of a side is $\frac{1}{N}$.\par
  Then by applying the pigeonhole principle, there exist $k,k'\in\mathbb{Z}_{>0}$ ($k<k'$) such that $(\{kr_1\},\{kr_2\},\ldots,\{kr_m\})$ and $(\{k'r_1\},\{k'r_2\},\ldots,\{k'r_m\})$ are contained in a common small hypercube.
  By the definition of the hypercubes, $n=k'-k$ satisfies the condition in the lemma.\par
  Moreover, for any large positive integer $n_0$, define
  \begin{align*}
   A_{n_0}:=\{(\{nr_1\},\{nr_2\},\ldots,\{nr_m\})\in[0,1)^m\}_{n=1,n_0+1,2n_0+1,\ldots}
  \end{align*}
  and by the same deduction as above, there exists $n\geq n_0$ which satisfies the condition.
  Thus, the proof is concluded. 
 \end{proof}

 \end{proof}

 Hence the assumption of Lemma \ref{Construction} holds for $B=\left(\frac{a,p}{F}\right)$ and its order $\mathcal{O}$, and therefore there is a $g$-dimensional simple abelian variety $X$ such that $\mathcal{O}$ is embedded into $\mathrm{End}(X)$.
 This simple abelian variety is of Type 2 in Table \ref{table}.
 \begin{proof}[Proof of Theorem \ref{Main Theorem}]
  $\gamma$ and $\overline{\gamma}=\frac{1}{\gamma}$ are elements of $\mathcal{O}_K$ and hence they are in $\mathcal{O}$, so $\gamma\in \mathrm{End}(X)$ is an automorphism of $X$.\par
  Let $\rho_1,\rho_2,\ldots,\rho_g$ be the eigenvalues of $\rho_a(\gamma)$.
  By applying Section \ref{Calculation} for the $g$-dimensional simple abelian variety $X$, $B=\mathrm{End}_\mathbb{Q}(X)=\left(\frac{a,p}{F}\right)$, $\alpha=\gamma$, $d=d'=2$, $d''=g$, $e=\frac{g}{2}$ and $F(x)=P(x)$,
  \begin{equation*}
   \prod_{i=1}^{g}(x-\rho_{i})(x-\overline{\rho_{i}})=P(x)^2=\prod_{i=1}^{g}(x-z_i)^2
  \end{equation*}
  as polynomials in $x$.\par
  Thus, $\rho_1, \ldots, \rho_g,\overline{\rho_1}, \ldots, \overline{\rho_g}$ is a permutation of $z_1, \ldots, z_g, z_1, \ldots, z_g$.\par
  Therefore, we may assume that $\abs{\rho_i}=\abs{z_i}$ ($1\leq i\leq g$) and this implies that $\lambda_0(\gamma)=1$, $\lambda_k(\gamma)=\prod_{i=1}^{k} \abs{z_i}^2$ ($1\leq k\leq g$).
 \end{proof}
 The next similar theorem can be proved in an analogous way.
 \begin{theorem}\label{Main Theorem1}
  Let $P(x)\in\mathbb{Z}[x]$ be an irreducible monic polynomial whose constant term is $\pm 1$ and whose roots are all real.
  Let $g$ be the degree of the polynomial $P(x)\in\mathbb{Z}[x]$ and $z _1,z _2,\ldots,z_g\in\mathbb{R}$ be the roots of $P(x)$ which are ordered as $\abs{z_1}\geq\abs{z_2}\geq\cdots\geq\abs{z_g}$.\par
  Then, there is a simple abelian variety $X$ of dimension $g$ and an automorphism $f\colon X\longrightarrow X$ with dynamical degrees
  \begin{center}
   $\lambda_0(f)=1, \lambda_k(f)=\prod_{i=1}^{k} \abs{z_i}^2\,(1\leq k\leq g)$.
  \end{center}
 \end{theorem}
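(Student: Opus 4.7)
The plan is to carry out the same strategy as in Theorem \ref{Main Theorem}, but in the simpler Type 1 setting (totally real endomorphism algebra) using Lemma \ref{Construction1} in place of Lemma \ref{Construction}. No quaternion algebra is needed because the minimal polynomial $P(x)$ already defines a totally real number field, and the hypothesis on the constant term already packages the invertibility we need.

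Dispose first of the degenerate case $g=1$: then $P(x)=x\pm 1$, and any elliptic curve together with $\pm\id$ provides an automorphism with $\lambda_0=\lambda_1=1$, matching $\prod_{i=1}^{1}|z_i|^2=1$. From now on assume $g\geq 2$. Let $\gamma$ be a root of $P(x)$ and set $K:=\mathbb{Q}(\gamma)$. Since $P(x)$ is irreducible monic with all real roots, $K$ is a totally real number field with $[K:\mathbb{Q}]=g$. Because the constant term of $P(x)$ is $\pm 1$, we have $\mathrm{N}_{K/\mathbb{Q}}(\gamma)=\pm 1$, so $\gamma\in U_K\subset\mathcal{O}_K$.

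Next I produce a simple abelian variety whose endomorphism ring absorbs $\gamma$. Take $F=K$ in Lemma \ref{Construction1} with $'=\id_K$; this is a positive anti-involution over $\mathbb{Q}$ since for any nonzero $x\in K$, $\mathrm{Tr}_{K/\mathbb{Q}}(x\cdot x)=\sum_{\sigma}\sigma(x)^2>0$, the sum running over real embeddings. Apply Lemma \ref{Construction1} with $\mathcal{O}=\mathcal{O}_K$ and $m=1$ to obtain a $g$-dimensional simple abelian variety $X$ with $\mathrm{End}_{\mathbb{Q}}(X)\simeq K$ and an embedding $\mathcal{O}_K\hookrightarrow\mathrm{End}(X)$. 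Because $\gamma$ and $\gamma^{-1}$ both lie in $\mathcal{O}_K$, the endomorphism corresponding to $\gamma$ is an automorphism $f\colon X\to X$.

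It remains to read off the dynamical degrees. Apply the Type 1 computation from Section \ref{Calculation} to $f=\gamma$ with minimal polynomial $F(x)=P(x)$ of degree $d'=g$: letting $\rho_1,\dots,\rho_g$ denote the eigenvalues of the analytic representation $\rho_a(\gamma)$, one obtains
\begin{equation*}
\prod_{i=1}^{g}(x-\rho_i)(x-\overline{\rho_i})=P(x)^{\frac{2g}{g}}=P(x)^2=\prod_{i=1}^{g}(x-z_i)^2.
\end{equation*}
Since every $z_i$ is real, $\overline{\rho_i}=\rho_i$ would not be needed, but in any case the multiset $\{\rho_1,\dots,\rho_g,\overline{\rho_1},\dots,\overline{\rho_g}\}$ equals $\{z_1,z_1,\dots,z_g,z_g\}$; thus after relabelling we may arrange $|\rho_i|=|z_i|$ for $1\leq i\leq g$. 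Then $\lambda_0(f)=1$ and $\lambda_k(f)=\prod_{i=1}^{k}|\rho_i|^2=\prod_{i=1}^{k}|z_i|^2$, which is the desired conclusion.

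There is no real obstacle beyond the one mild verification above: confirming that $\id_K$ is a positive anti-involution on a totally real $K$, which is the reason Lemma \ref{Construction1} applies cleanly here. Everything else is a direct transcription of the Type 1 endomorphism analysis in Sections \ref{Automorphism of simple abelian varieties} and \ref{Calculation}.
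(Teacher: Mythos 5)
Your proposal is correct and follows essentially the same route as the paper: take $F=\mathbb{Q}(\gamma)$ totally real, apply Lemma \ref{Construction1} with the identity involution and $\mathcal{O}=\mathcal{O}_F$, note that the unit constant term makes $\gamma$ and $\gamma^{-1}$ both integral hence $\gamma$ an automorphism, and read off the dynamical degrees from the Type 1 computation giving $\prod_{i}(x-\rho_i)(x-\overline{\rho_i})=P(x)^2$. The only cosmetic difference is your explicit treatment of the $g=1$ case and the verification that $\id_K$ is positive, both of which the paper leaves implicit.
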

 \begin{proof}
  Let $\delta$ be a root of $P(x)$ and so $\delta$ is an algebraic integer.
  The conjugates of $\delta$ are all real, and so $F:=\mathbb{Q}(\delta)$ is a totally real number field.
  The identity map on $F$ is a positive anti-involution and so by Lemma \ref{Construction1} with $m=1$, there is a $g$-dimensional simple abelian variety $X$ such that $\mathrm{End}(X)$ contains $\mathcal{O}_F$.
  This simple abelian variety is of Type 1 in Table \ref{table}.\par
  Since the constant term of $P(x)$ is an unit in $\mathbb{Z}$, $\delta^{-1}$ is also an integral element, and so $\delta, \delta^{-1}$ are in $\mathcal{O}_F$.
  Thus, $\delta, \delta^{-1}$ can be regarded as endomorphisms of the simple abelian variety $X$ and so these are automorphisms.
  Let $\rho_1,\rho_2,\ldots,\rho_g$ be the eigenvalues of $\rho_a(\delta)$.
  By applying Section \ref{Calculation} for the $g$-dimensional simple abelian variety $X$, $K=F$, $\alpha=\delta$, $d=1$, $e=g$, $d'=g$ and $F(x)=P(x)$,
  \begin{align*}
   \prod_{i=1}^{g}(x-\rho_{i})(x-\overline{\rho_{i}})=P(x)^2=\prod_{i=1}^{g}(x-z_i)^2
  \end{align*}
  as polynomials in $x$.\par
  Thus, $\rho_1, \ldots, \rho_g,\overline{\rho_1}, \ldots, \overline{\rho_g}$ is a permutation of $z_1, \ldots, z_g, z_1, \ldots, z_g$.\par
  Therefore, we may assume that $\abs{\rho_i}=\abs{z_i}$ ($1\leq i\leq g$) and this implies that $\lambda_0(\delta)=1$, $\lambda_k(\delta)=\prod_{i=1}^{k} \abs{z_i}^2$ ($1\leq k\leq g$).
 \end{proof}

\section{Corollaries}\label{Corollaries}
 This section is devoted to corollaries of Theorem \ref{Main Theorem}.
 \begin{lemma}\label{Main lemma}
  For a Salem number $\lambda>0$ and a positive integer $n\in\mathbb{Z}$, $\sqrt[n]{\lambda}$ has an algebraic conjugate of modulus $1$.
 \end{lemma}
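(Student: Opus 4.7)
The plan is to transport a modulus-one conjugate of $\lambda$ back to $\mu:=\sqrt[n]{\lambda}$ through the tower of fields $\mathbb{Q}\subset\mathbb{Q}(\lambda)\subset\mathbb{Q}(\mu)$. First I would observe that $\mu$ is an algebraic integer, being a root of $x^n-\lambda$ with coefficients in the ring of integers of $\mathbb{Q}(\lambda)$; in particular the inclusion $\mathbb{Q}(\lambda)\subset\mathbb{Q}(\mu)$ is a (finite) algebraic extension.

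By the reformulation of a Salem number in Definition \ref{Definition of Salem}, the minimal polynomial $P(x)\in\mathbb{Z}[x]$ of $\lambda$ has at least one root $z\in\mathbb{C}$ with $\abs{z}=1$. Since $P(x)$ is irreducible over $\mathbb{Q}$ and $\lambda$ is one of its roots, there exists a $\mathbb{Q}$-embedding $\sigma\colon\mathbb{Q}(\lambda)\hookrightarrow\overline{\mathbb{Q}}$ sending $\lambda$ to $z$. I would then invoke the classical extension theorem for embeddings into an algebraic closure to obtain a $\mathbb{Q}$-embedding $\tilde{\sigma}\colon\mathbb{Q}(\mu)\hookrightarrow\overline{\mathbb{Q}}$ whose restriction to $\mathbb{Q}(\lambda)$ equals $\sigma$.

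The image $\tilde{\sigma}(\mu)$ is by construction a $\mathbb{Q}$-conjugate of $\mu$, and raising to the $n$-th power gives
\begin{align*}
\tilde{\sigma}(\mu)^n=\tilde{\sigma}(\mu^n)=\tilde{\sigma}(\lambda)=z,
\end{align*}
so $\abs{\tilde{\sigma}(\mu)}=\abs{z}^{1/n}=1$. Because $\abs{\mu}=\lambda^{1/n}>1$ the element $\tilde{\sigma}(\mu)$ is automatically distinct from $\mu$, hence it is a genuine algebraic conjugate of $\mu$ of modulus one.

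There is essentially no serious obstacle: the only ingredients beyond the definition of a Salem number are the extension theorem for embeddings of an algebraic field extension and the irreducibility of $P(x)$, the latter being what allows an arbitrary modulus-one root of $P(x)$ to be realised as $\sigma(\lambda)$ for a genuine $\mathbb{Q}$-embedding of $\mathbb{Q}(\lambda)$.
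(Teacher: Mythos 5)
Your proof is correct, but it follows a genuinely different route from the one in the paper. You argue conceptually via the Isomorphism Extension Theorem: pick a modulus-one root $z$ of $P(x)$, realize it as $\sigma(\lambda)$ for a $\mathbb{Q}$-embedding $\sigma\colon\mathbb{Q}(\lambda)\hookrightarrow\overline{\mathbb{Q}}$, extend $\sigma$ to $\tilde{\sigma}\colon\mathbb{Q}(\mu)\hookrightarrow\overline{\mathbb{Q}}$, and conclude that $\tilde{\sigma}(\mu)$ is a root of the minimal polynomial $Q$ of $\mu$ satisfying $\tilde{\sigma}(\mu)^{n}=z$, hence of modulus $1$. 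The paper instead works directly with polynomials: it forms $R(x)=Q(x)Q(\zeta_n x)\cdots Q(\zeta_n^{n-1}x)$, verifies that $R$ has integer coefficients concentrated in degrees divisible by $n$, sets $S(x)=R(x^{1/n})\in\mathbb{Z}[x]$, notes $P(x)\mid S(x)$ because $\lambda$ is a root, and transports a modulus-one root of $P$ to one of $Q$ through the factor structure of $R$. Your version is shorter and avoids the bookkeeping about Galois-invariance and integrality of the coefficients of $R(x)$; the paper's version is more elementary in that it never invokes the extension theorem for embeddings and makes the divisibility relation between $Q$ and $P(x^n)$ completely explicit. The final remark that $\tilde{\sigma}(\mu)\neq\mu$ is unnecessary for the statement (any root of $Q$ is a conjugate, and the one you found has modulus $1\neq|\mu|$), but it does no harm.
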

 \begin{proof}
  Let $Q(x)\in\mathbb{Z}[x]$ be the minimal polynomial of $\mu=\sqrt[n]{\lambda}$.\par
  The polynomial $R(x)=Q(x)Q(\zeta_n x)\cdots Q(\zeta_n^{n-1}x)$ is invariant under $\mathrm{Gal}(\mathbb{Q}(\zeta_n)/\mathbb{Q})$, so $R(x)$ has coefficients in $\mathbb{Q}$ and has non-zero coefficients only at $(an)$-th degrees ($a\in\mathbb{Z}_{>0}$) since $R(x)=R(\zeta_n x)$.
  Moreover, all the coefficients of $R(x)$ are integers since they are in $\mathbb{Z}[\zeta_n]$ and so they are algebraic integers in $\mathbb{Q}$.\par
  Thus, $S(x)=R(x^{\frac{1}{n}})$ is a polynomial in $\mathbb{Z}[x]$ and $\lambda$ is a root of this polynomial.
  Therefore, $S(x)$ is divided by the Salem polynomial of $\lambda$ and so $S(x)$ has a root of modulus $1$.
  This implies that $Q(x)$ has a root of modulus $1$ and the lemma is concluded.
 \end{proof}
 \begin{corollary}\label{realize salem}
  Any Salem number is realized as the first dynamical degree of an automorphism of a simple abelian variety over $\mathbb{C}$.
 \end{corollary}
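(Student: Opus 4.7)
The plan is to apply Theorem \ref{Main Theorem} to a carefully chosen irreducible monic polynomial $Q(x)\in\mathbb{Z}[x]$ whose largest-modulus root has absolute value $\sqrt{\lambda}$. Given a Salem number $\lambda$, set $\mu=\sqrt{\lambda}$; since $\lambda$ is an algebraic integer, so is $\mu$. Let $Q(x)\in\mathbb{Z}[x]$ be the minimal polynomial of $\mu$, which is automatically monic and irreducible.

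The first step is the observation that $Q(x)$ divides $R(x):=P_{\lambda}(x^{2})\in\mathbb{Z}[x]$, where $P_{\lambda}$ denotes the Salem polynomial of $\lambda$, since $R(\mu)=P_{\lambda}(\lambda)=0$. Consequently every root of $Q$ is a square root of some root of $P_{\lambda}$. The roots of $P_{\lambda}$ consist of $\lambda$, $1/\lambda$, and complex numbers of modulus $1$, so their square roots are either real or of modulus $1$, and the one of largest absolute value is $\sqrt{\lambda}=\mu$. Thus $Q(x)$ is an irreducible monic integer polynomial whose roots are all real or of modulus $1$ and whose maximum-modulus root is $\mu$.

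Next, I apply Lemma \ref{Main lemma} with $n=2$: it guarantees that $Q(x)$ has at least one root of modulus $1$, which is the remaining hypothesis of Theorem \ref{Main Theorem}. Applying that theorem to $Q$ produces a simple abelian variety $X$ of dimension $\deg Q$ and an automorphism $f\colon X\to X$ whose first dynamical degree is the square of the largest-modulus root of $Q$, namely $|\mu|^{2}=\lambda$.

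All the substantive work has been done in Lemma \ref{Main lemma} and Theorem \ref{Main Theorem}; once those are in hand the corollary is essentially immediate. The only real obstacle would have been verifying that the minimal polynomial of $\sqrt{\lambda}$ inherits the ``real or modulus $1$'' property from the Salem polynomial of $\lambda$, but this follows at once from the divisibility $Q(x)\mid P_{\lambda}(x^{2})$.
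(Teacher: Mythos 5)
Your proposal is correct and follows essentially the same route as the paper's own proof: both use the divisibility $Q(x)\mid P(x^2)$ to see that all roots of the minimal polynomial of $\sqrt{\lambda}$ are real or of modulus $1$ with $\sqrt{\lambda}$ of maximal modulus, invoke Lemma \ref{Main lemma} with $n=2$ to supply the required root of modulus $1$, and then apply Theorem \ref{Main Theorem}. No gaps.
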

 \begin{proof}
  Let $\lambda$ be a Salem number, $P(x)$ be the minimal polynomial of $\lambda$ and $Q(x)$ be the minimal polynomial of $\sqrt{\lambda}$.\par
  By Lemma \ref{Main lemma} for the case $n=2$, $\sqrt{\lambda}$ has a conjugate of modulus $1$.
  $Q(x)$ is a factor of $P(x^2)\in\mathbb{Z}[x]$ and since the roots of $P(x^2)$ are either real or of modulus $1$, so are the roots of $Q(x)$.
  Thus, the minimal polynomial of $\sqrt{\lambda}$ satisfies the conditions in Theorem \ref{Main Theorem} and there exists an automorphism of a simple abelian variety for this minimal polynomial.
  Now the element $\sqrt{\lambda}$ has the maximal absolute value among the roots of the minimal polynomial and therefore the first dynamical degree of the automorphism is $\sqrt{\lambda}^2=\lambda$.
 \end{proof}
 The next lemma is used in the proof of Corollary \ref{realize Salem} for analyzing the minimal polynomial of $\sqrt{\lambda}$.
 \begin{lemma}[{Kronecker's theorem}]
  Let $P(x)\in\mathbb{Z}[x]$ be a monic polynomial whose roots are all of modulus $1$.
  Then, the roots of $P(x)$ are all roots of unity.
 \end{lemma}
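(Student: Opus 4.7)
\medskip

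The plan is to exploit the standard pigeonhole argument on the polynomials obtained by raising the roots to successively higher powers. Let $\alpha_1,\ldots,\alpha_n$ be the roots of $P(x)$, which are algebraic integers since $P$ is monic with integer coefficients. For each integer $k\geq 1$, I would form the polynomial
\begin{align*}
 P_k(x):=\prod_{i=1}^n(x-\alpha_i^k).
\end{align*}
Its coefficients are elementary symmetric functions of $\alpha_1^k,\ldots,\alpha_n^k$, hence they are symmetric polynomials in $\alpha_1,\ldots,\alpha_n$ with integer coefficients. Consequently, these coefficients are algebraic integers lying in $\mathbb{Q}$, so they belong to $\mathbb{Z}$.

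Next I would use the modulus condition. Because $|\alpha_i|=1$ for all $i$, the $j$-th elementary symmetric function of $\alpha_1^k,\ldots,\alpha_n^k$ is bounded in absolute value by $\binom{n}{j}$, independent of $k$. Thus the set $\{P_k\mid k\geq 1\}$ lies inside a finite subset of $\mathbb{Z}[x]$, and by the pigeonhole principle there exist integers $k_1<k_2$ with $P_{k_1}=P_{k_2}$. Equating the multisets of roots yields a permutation $\phi$ of $\{1,\ldots,n\}$ such that $\alpha_i^{k_1}=\alpha_{\phi(i)}^{k_2}$ for every $i$.

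The last step is to iterate this identity. Let $N$ be the order of $\phi$ in the symmetric group $\mathfrak{S}_n$. Applying the relation $N$ times along the orbit through $i$ gives
\begin{align*}
 \alpha_i^{k_1^N}=\alpha_{\phi(i)}^{k_2\cdot k_1^{N-1}}=\alpha_{\phi^2(i)}^{k_2^2\cdot k_1^{N-2}}=\cdots=\alpha_{\phi^N(i)}^{k_2^N}=\alpha_i^{k_2^N}.
\end{align*}
Since $k_1\neq k_2$, the integer $k_2^N-k_1^N$ is non-zero, and hence $\alpha_i^{k_2^N-k_1^N}=1$, so each $\alpha_i$ is a root of unity.

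I expect the only subtle point to be checking that $\phi$ is genuinely a permutation (and not merely a function), but this is immediate because the equality $P_{k_1}=P_{k_2}$ equates the multisets $\{\alpha_i^{k_1}\}$ and $\{\alpha_j^{k_2}\}$ with multiplicities; everything else is routine. No additional hypothesis such as irreducibility of $P(x)$ is required.
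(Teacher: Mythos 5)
Your proof is correct and follows essentially the same route as the paper: form $P_k(x)=\prod_i(x-\alpha_i^k)$, observe the coefficients are integers bounded by binomial coefficients, apply pigeonhole to get $P_{k_1}=P_{k_2}$, and iterate the resulting permutation of roots to conclude $\alpha_i^{k_1^N}=\alpha_i^{k_2^N}$. Your write-up is in fact slightly more careful than the paper's (making the permutation $\phi$ and its order $N$ explicit, and justifying integrality of the coefficients via symmetric functions), but the argument is the same.
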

 \begin{proof}
  Let $z_1, \ldots, z_n$ be the roots of $P(x)$ and write
  \begin{align*}
   P(x)=\prod_{l=1}^{n}(x-z_l).
  \end{align*}
  Define  $P_k(x)$ as
  \begin{align*}
   P_k(x)=\prod_{l=1}^{n}(x-{z_l}^k)
  \end{align*}
  for all $k\in\mathbb{Z}_{>0}$. 
  Now the monic polynomial $P_k(x)$ is of degree $n$ and its $i$-th coefficient is an integer whose absolute value is at most $\binom{n}{i}$ by the triangle inequality, and so the number of the candidates of the coefficients of $P_k(x)$ is finite.\par
  Thus, $\{P_k(x)\}_{k=1,2,\ldots}$ is a finite set of polynomials and there are some $i, j\in\mathbb{Z}_{>0}$ ($i\neq j$) such that $P_i(x)=P_j(x)$.\par
  Therefore, $\{{z_l}^i\}_{1\leq l\leq n}=\{{z_l}^j\}_{1\leq l\leq n}$ and for each $l$, there are relations ${z_l}^i={z_{l^{\prime}}}^j$, ${z_{l^{\prime}}}^i={z_{l^{\prime\prime}}}^j, \ldots$ for some $l',l'',\ldots$.
  This implies that ${z_l}^{i^m}={z_l}^{j^m}$ for some $m>0$ and since $i\neq j$, $z_l$ is a root of unity for each $l$. 
 \end{proof}
 \begin{corollary}\label{realize Salem}
  For a Salem number $\lambda$ of degree $g$, there is an automorphism of a simple abelian variety, whose dimension is $g$ or $2g$, such that the first dynamical degree is equal to $\lambda$.
 \end{corollary}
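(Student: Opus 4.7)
The plan is to apply the Main Theorem (Theorem~\ref{Main Theorem}) not to the Salem polynomial $P(x)$ of $\lambda$ itself, but to the minimal polynomial $Q(x)\in\mathbb{Z}[x]$ of $\mu := \sqrt{\lambda}$. This is essentially the same strategy as in Corollary~\ref{realize salem}, and the only refinement is that I will track the degree of $Q$ precisely in order to certify the dimension of the resulting simple abelian variety.

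The first step is to pin down $\deg Q$. From $\mathbb{Q}(\lambda) = \mathbb{Q}(\mu^{2})\subseteq\mathbb{Q}(\mu)$ one gets $\deg Q\geq g$, while $[\mathbb{Q}(\mu):\mathbb{Q}(\lambda)]\leq 2$ since $\mu$ satisfies $X^{2}-\lambda=0$ over $\mathbb{Q}(\lambda)$. The only multiples of $g$ lying between $g$ and $2g$ inclusive are $g$ and $2g$, so $\deg Q\in\{g,2g\}$.

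The second step is to verify that $Q$ satisfies the hypotheses of the Main Theorem. Because $Q(\mu)=0$ and $P(\mu^{2})=0$, $Q(x)$ divides $P(x^{2})$ in $\mathbb{Z}[x]$. The roots of $P(x^{2})$ are $\pm\sqrt{\lambda},\pm\sqrt{1/\lambda}$ (real) together with the square roots of the unit-modulus conjugates of $\lambda$ (again of modulus $1$), so the real-or-unit-modulus property of $P(x^{2})$ descends to the irreducible factor $Q$. That $Q$ has at least one root of modulus $1$ is Lemma~\ref{Main lemma} applied with $n=2$; alternatively, one may directly rule out the possibility that all roots of $Q$ lie in the four-element real set $\{\pm\sqrt{\lambda},\pm\sqrt{1/\lambda}\}$, since this would force $Q(x)=(x^{2}-\lambda)(x^{2}-1/\lambda)$ and contradict $\lambda+1/\lambda\notin\mathbb{Q}$ (a consequence of $\lambda$ having Salem degree $\geq 4$).

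Applying the Main Theorem to $Q$ then produces a simple abelian variety $X$ of dimension $\deg Q\in\{g,2g\}$ and an automorphism $f$ whose first dynamical degree equals the square of the largest modulus among the roots of $Q$. Since $\mu=\sqrt{\lambda}>1$ is itself a root of $Q$, and every other root of $Q$ has modulus either $1$ or $\sqrt{1/\lambda}<1$, the maximum is realized at $\mu$; hence $\lambda_{1}(f)=\mu^{2}=\lambda$, as required. The main obstacle is conceptually small: one only needs the field-extension bookkeeping pinning $\deg Q$ at $g$ or $2g$ and the transfer of the real-or-unit-modulus condition from $P(x^{2})$ to its irreducible factor $Q$; once both are established the Main Theorem does the rest.
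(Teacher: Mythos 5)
Your proposal is correct, and its overall shape matches the paper's: both apply Theorem~\ref{Main Theorem} to the minimal polynomial $Q$ of $\sqrt{\lambda}$, both get the required root of modulus $1$ from Lemma~\ref{Main lemma} with $n=2$, and both observe that the largest modulus among the roots of $Q$ is $\sqrt{\lambda}$, so that $\lambda_1(f)=\lambda$. Where you genuinely diverge is in pinning down $\deg Q$. The paper does this by analysing the factorization of $P(x^2)$: it lists the possible root sets of $Q$, uses Kronecker's theorem to kill the case where a complementary factor would have all roots on the unit circle, and concludes that either $Q(x)=P(x^2)$ or $Q(x)Q(-x)=P(x^2)$. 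You instead read the degree off the tower $\mathbb{Q}\subseteq\mathbb{Q}(\lambda)\subseteq\mathbb{Q}(\sqrt{\lambda})$: since $[\mathbb{Q}(\sqrt{\lambda}):\mathbb{Q}(\lambda)]\in\{1,2\}$, multiplicativity gives $\deg Q\in\{g,2g\}$ immediately. Your route is shorter and more elementary, and it suffices for the corollary as stated; the paper's longer case analysis buys the extra structural information (the exact shape of the root set of $Q$ and the identity $P(x^2)=Q(x)Q(-x)$ in the degree-$g$ case) that is exploited in the remark following the corollary, namely that $\deg Q=g$ exactly when $\lambda$ is the square of a Salem number. One small caveat on your optional alternative to Lemma~\ref{Main lemma}: to conclude that all-real roots would force $Q(x)=(x^2-\lambda)(x^2-1/\lambda)$, you should note explicitly that separability plus $\deg Q\geq g\geq 4$ forces $Q$ to have all four elements of $\{\pm\sqrt{\lambda},\pm\sqrt{1/\lambda}\}$ as roots; with that said, the contradiction with $\lambda+\lambda^{-1}\notin\mathbb{Q}$ goes through.
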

 \begin{proof}
  Let $P(x)$ be the minimal polynomial of $\lambda$.
  As in Definition \ref{Definition of Salem}, $P(x)$ has $\lambda$, $\frac{1}{\lambda}$ as its roots and all other roots are of modulus $1$. 
  Now the minimal polynomial $Q(x)$ of $\sqrt{\lambda}$ is a factor of $P(x^2)$ and by Lemma \ref{Main lemma}, $Q(x)$ has a root of modulus $1$.
  Thus, the set of the roots of $Q(x)$ is invariant under the map $z\mapsto \frac{1}{z}$.
  By assuming that $P(x^2)$ is reducible, the set of the roots of $Q(x)$ is one of the next.
  \begin{enumerate}
  \renewcommand{\labelenumi}{\arabic{enumi}.}
   \item\label{(1)} $\sqrt{\lambda}, \frac{1}{\sqrt{\lambda}}, z_1, \overline{z_1}, \ldots, z_n, \overline{z_n}$\quad (where $z_i$ is of modulus $1$ for all $i$)
   \item\label{(2)} $\sqrt{\lambda}, \frac{1}{\sqrt{\lambda}}, -\sqrt{\lambda}, -\frac{1}{\sqrt{\lambda}}, z_1, \overline{z_1}, \ldots, z_n, \overline{z_n}$\quad (where $z_i$ is of modulus $1$ for all $i$)
  \end{enumerate}\par
  On the case \ref{(2)}, the roots of $\frac{P(x^2)}{Q(x)}$ are all of modulus $1$, so by Kronecker's theorem, they are roots of unity, a contradiction.\par
  On the case \ref{(1)}, $Q(-x)$ is also a factor of $P(x^2)$ and $Q(x)$, $Q(-x)$ have no common roots.
  Moreover, $\frac{P(x^2)}{Q(x)Q(-x)}$ is a constant or a polynomial whose roots are all of modulus $1$. 
  The latter contradicts as above and so $P(x^2)=Q(x)Q(-x)$ holds by considering the leading coefficient.\par
  Thus either $Q(x)=P(x^2)$ or $Q(x)Q(-x)=P(x^2)$ holds for $Q(x)$.
  This implies that the minimal polynomial of $\sqrt{\lambda}$ has degree $g$ or degree $2g$ and this corresponds to the dimension of the simple abelian variety (cf.\ Theorem \ref{Main Theorem}).
 \end{proof}
 \begin{rem}
  By combining with Lemma \ref{Construction2}, any Salem number of degree $g$ is realized as the first dynamical degree of an automorphism of a $2g$-dimensional simple abelian variety.\par 
  By observing the proof of Corollary \ref{realize Salem}, the degree of the minimal polynomial of $\sqrt{\lambda}$ is $g$ if and only if $\lambda$ is the square of some Salem numbers.\par
  Thus, it can be said that a Salem number $\lambda$ of degree $g$ is realized as the first dynamical degree of an automorphism of a $g$-dimensional simple abelian variety when $\lambda$ is the square of some Salem number.
 \end{rem}
 Let $\lambda$ be a Salem number of degree $g$.\par
 If $\lambda$ is the square of some Salem number, the automorphism $f$ of a $g$-dimensional simple abelian variety constructed in Corollary \ref{realize Salem} satisfies
 \begin{align*}
  \lambda_0(f)=\lambda_g(f)=1, \lambda_i(f)=\lambda\ (1\leq i\leq g-1)
 \end{align*}
 as in Theorem \ref{DH22 theorem}.
 If $\lambda$ is not the square of any Salem number, the automorphism $f$ of a $2g$-dimensional simple abelian variety constructed in Corollary \ref{realize Salem} satisfies
 \begin{align*}
  \lambda_0(f)=\lambda_{2g}(f)=1, \lambda_1(f)=\lambda_{2g-1}(f)=\lambda, \lambda_i(f)=\lambda^2\ (2\leq i\leq 2g-2).
 \end{align*}\par
 By summarizing Theorem \ref{Main Theorem} and Theorem \ref{Main Theorem1}, the next corollary is implied.
 \begin{corollary}\label{possible dynamical degrees}
  Let $\lambda$ be an algebraic integer of degree $g$ whose conjugates are either real or of modulus $1$, and such that $\frac{1}{\lambda}$ is also an algebraic integer.\par
  Then there is an automorphism of a $g$-dimensional simple abelian variety which corresponds to $\lambda$ and the first dynamical degree of this automorphism is the square of the maximal absolute value of the conjugates of $\lambda$.
 \end{corollary}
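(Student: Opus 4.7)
The plan is to dispatch this as a direct case analysis on the structure of the minimal polynomial $P(x)\in\mathbb{Z}[x]$ of $\lambda$, applying either Theorem \ref{Main Theorem} or Theorem \ref{Main Theorem1} in each case. Since $P(x)$ is the minimal polynomial of the algebraic integer $\lambda$, it is monic and irreducible of degree $g$, so the hypotheses on irreducibility and degree are automatic.

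First I would verify that the constant term of $P(x)$ is $\pm 1$. Write $P(x) = x^g + a_{g-1}x^{g-1} + \cdots + a_0$ with $a_0\in\mathbb{Z}$. Since $\lambda\neq 0$, the polynomial $\tilde{P}(x) := x^g P(1/x) = a_0 x^g + a_1 x^{g-1} + \cdots + 1$ has $1/\lambda$ as a root, so $a_0^{-1}\tilde{P}(x)$ is the minimal polynomial of $1/\lambda$ over $\mathbb{Q}$. Since $1/\lambda$ is assumed to be an algebraic integer, its minimal polynomial has integer coefficients, which forces $a_0^{-1}\in\mathbb{Z}$, hence $a_0 = \pm 1$.

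Next I would split into two cases depending on whether $P(x)$ has a root of modulus $1$. If every root of $P(x)$ is real, then $P(x)$ satisfies the hypotheses of Theorem \ref{Main Theorem1} (monic, irreducible, constant term $\pm 1$, all roots real), which directly produces a simple abelian variety $X$ of dimension $g$ and an automorphism $f\colon X\to X$ with $\lambda_1(f)=|z_1|^2$, where $z_1$ is the largest-modulus conjugate of $\lambda$. If instead at least one conjugate of $\lambda$ has modulus $1$, then $P(x)$ satisfies the hypotheses of Theorem \ref{Main Theorem} (monic irreducible with roots real or of modulus $1$, at least one of modulus $1$), and the same conclusion follows directly from that theorem.

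Since the two cases are mutually exhaustive and both yield a $g$-dimensional simple abelian variety together with an automorphism whose first dynamical degree is the square of the maximal absolute value of the conjugates of $\lambda$, the corollary is established. There is no real obstacle here: the content has already been packed into Theorems \ref{Main Theorem} and \ref{Main Theorem1}, and the only small verification needed is the argument above that the reciprocal-integer hypothesis on $\lambda$ forces the constant term of $P(x)$ to be a unit, which is exactly what Theorem \ref{Main Theorem1} requires in the all-real case.
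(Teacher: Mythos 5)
Your proposal is correct and follows exactly the route the paper intends: the corollary is stated there as an immediate consequence of Theorems \ref{Main Theorem} and \ref{Main Theorem1}, split according to whether some conjugate has modulus $1$ or all are real, and your verification that the hypothesis on $\frac{1}{\lambda}$ forces the constant term of the minimal polynomial to be $\pm1$ is precisely the small check needed to invoke Theorem \ref{Main Theorem1} in the all-real case.
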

 \begin{remark}\label{possible dynamical degrees1}
  By combining the corollary with Lemma \ref{Construction1} and Lemma \ref{Construction2}, there is an automorphism, which corresponds to $\lambda$, of a $gm$-dimensional simple abelian variety for any $m\in\mathbb{Z}_{>0}$.
 \end{remark}

\section{Small first dynamical degrees}\label{Small first dynamical degree}
 In \cite{DH22}, there are analyses of the size relationship between the dynamical degrees of an automorphism of a simple abelian variety.\par
 From here, we consider what values appear as the first dynamical degrees of automorphisms of simple abelian varieties.\par
 This section is devoted to examining small first dynamical degrees of automorphisms of simple abelian varieties with some restrictions.
 \subsection{Small first dynamical degrees with restricting the dimension}\label{Small dynamical degree with restricted dimension}
  Fix a positive integer $g\geq2$ and let $X$ be an abelian variety whose dimension is $g$.\par
  Denote $X=\mathbb{C}^g/\Lambda$ and then the singular homology is calculated as
  \begin{align*}
   \mathrm{H}_1(X)\simeq\pi^{-1}(0)\simeq\Lambda\simeq\mathbb{Z}^{2g}
  \end{align*}
  with the universal covering $\pi:\mathbb{C}^g\rightarrow X$.\par
  Let $f$ be a surjective endomorphism of $X$.
  As in Section \ref{Dynamical degree}, the eigenvalues of $\rho_r(f)$ can be written as $\rho_1,\cdots,\rho_g,\overline{\rho_1},\ldots,\overline{\rho_g}$ and they are also the eigenvalues of the isomorphism $f_*:\mathrm{H}_1(X)\longrightarrow \mathrm{H}_1(X)$ with multiplicity.
  The characteristic polynomial of the linear transformation $f_*$ is an equation with coefficients in $\mathbb{Z}$ and can be written as $p(x)=x^{2g}+a_1x^{2g-1}+\cdots+a_{2g}\in\mathbb{Z}[x]$.\par
  Let $c>1$ be a real number.
  Assume the first dynamical degree of $f$ is less than $c^2$ and then all the roots of $p(x)$ have absolute values less than $c$.
  This restriction gives
  \begin{align*}
   \abs{a_i}<c^i\binom{2g}{i}\quad(1\leq i\leq 2g)
  \end{align*}
  and so the number of the candidates of the minimal polynomials is finite.
  Thus, the number of the candidates of the first dynamical degrees less than $c^2$ is finite.\par
  Set a real number $c>1$ such that there is a surjective endomorphism of a $g$-dimensional abelian variety whose first dynamical degree is not $1$ and smaller than $c^2$.
  Then the set of the first dynamical degrees inside the open interval $(1,c^2)$ can be not empty.
  Thus, there is the minimum value of the first dynamical degrees besides $1$ and so the next theorem holds.
  \begin{theorem}
   Define
   \begin{align*}
    \mathcal{A}_g:=\left\{
                    \begin{array}{l}
                     \text{first dynamical degrees of surjective endomorphisms}\\
                     \text{  of abelian varieties over $\mathbb{C}$ whose dimension is $g$}
                    \end{array}
                   \right\}\backslash\{1\}.
   \end{align*}
   for an integer $g\geq 2$.
   This set has the minimum value.
  \end{theorem}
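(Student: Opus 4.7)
The plan is to show that, for every real constant $c > 1$, the set $\mathcal{A}_g \cap (1, c)$ is finite, and then to exhibit at least one explicit element of $\mathcal{A}_g$. These two facts together force $\mathcal{A}_g$ to have a smallest element: if $\lambda_0 \in \mathcal{A}_g$, then $\mathcal{A}_g \cap (1, \lambda_0]$ is nonempty and finite, hence has a minimum, which is automatically the minimum of $\mathcal{A}_g$.

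For the finiteness step, I would take a surjective endomorphism $f$ of a $g$-dimensional complex abelian variety $X = V/\Lambda$ and work with its rational representation $\rho_r(f) \in \mathrm{End}(\Lambda) \simeq \mathrm{M}_{2g}(\mathbb{Z})$. The characteristic polynomial $p_f(x) \in \mathbb{Z}[x]$ is monic of degree $2g$, and by Section~\ref{Dynamical degree} its roots are exactly $\rho_1, \overline{\rho_1}, \ldots, \rho_g, \overline{\rho_g}$, where $\rho_1, \ldots, \rho_g$ are the eigenvalues of $\rho_a(f)$; furthermore $\lambda_1(f) = \max_i |\rho_i|^2$ since $f$ is surjective. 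If $\lambda_1(f) < c$, every root of $p_f$ has modulus less than $\sqrt{c}$, so writing $p_f(x) = x^{2g} + a_1 x^{2g-1} + \cdots + a_{2g}$ and expanding via elementary symmetric functions yields $|a_j| \leq \binom{2g}{j} c^{j/2}$. Since the $a_j$ are integers, only finitely many polynomials $p_f$ are admissible, so only finitely many values of $\lambda_1(f)$ lie in $(1, c)$.

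For nonemptiness, the multiplication-by-$n$ map on any chosen $g$-dimensional complex abelian variety $X_0$ (for instance the $g$-fold self-product of an elliptic curve) is a surjective endomorphism whose analytic representation is $n \cdot I_g$, and hence has first dynamical degree $n^2$. Taking $n = 2$ gives $4 \in \mathcal{A}_g$, and applying the finiteness statement above with any $c > 4$ completes the argument.

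The proof has essentially no obstacle. The only conceptual ingredient is the integrality of the action of $f$ on the lattice $\Lambda \simeq \mathrm{H}_1(X, \mathbb{Z})$, which converts a bound on the moduli of the eigenvalues into a bound on integer coefficients and thus to a finiteness statement. In particular, there is no need to invoke the classification of endomorphism algebras from Section~\ref{Endomorphism algebra of simple abelian varieties}; the same strategy will also handle $\mathcal{B}_g$ provided one can produce at least one element of $\mathcal{B}_g$, for which Theorem~A furnishes plenty of examples.
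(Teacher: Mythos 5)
Your proposal is correct and follows essentially the same route as the paper: both arguments use the integrality of the characteristic polynomial of $f_*$ on $\mathrm{H}_1(X,\mathbb{Z})\simeq\mathbb{Z}^{2g}$ together with the binomial bound on its coefficients to show that only finitely many first dynamical degrees lie below any fixed threshold. Your explicit nonemptiness witness (multiplication by $2$, giving $4\in\mathcal{A}_g$) is in fact slightly more careful than the paper, which only asserts that a suitable $c$ making the set nonempty can be chosen.
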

  By the similar deduction, the next theorem also holds.
  \begin{theorem}
   For an integer $g\geq2$,
   \begin{align*}
    \mathcal{B}_g:=\left\{
                    \begin{array}{l}
                     \text{first dynamical degrees of automorphisms}\\
                     \text{  of simple abelian varieties over $\mathbb{C}$ whose dimension is $g$}
                    \end{array}
                   \right\}\backslash\{1\}
  \end{align*}
  has the minimum value.
  \end{theorem}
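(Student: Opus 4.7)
The plan is to imitate the argument just given for $\mathcal{A}_g$ and use the inclusion $\mathcal{B}_g\subseteq\mathcal{A}_g$ to inherit the same finiteness bound. For any simple abelian variety $X=\mathbb{C}^g/\Lambda$ of dimension $g$ and any automorphism $f$ of $X$, the rational representation $\rho_r(f)$ acts on $\mathrm{H}_1(X,\mathbb{Z})\cong\mathbb{Z}^{2g}$ with characteristic polynomial $p(x)=x^{2g}+a_1x^{2g-1}+\cdots+a_{2g}\in\mathbb{Z}[x]$. If $\lambda_1(f)<c^2$ for some real $c>1$, then every eigenvalue of $\rho_r(f)$ has modulus strictly less than $c$, and the elementary symmetric expansion gives $|a_i|<c^i\binom{2g}{i}$. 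Only finitely many integer polynomials satisfy these bounds, so only finitely many values of $\lambda_1(f)$ appear in $(1,c^2)$.

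To conclude, I need to exhibit a single element of $\mathcal{B}_g$, so that a suitable $c$ can be fixed. For any $g\geq 2$, Theorem \ref{Main Theorem1} applied to an irreducible monic polynomial $P(x)\in\mathbb{Z}[x]$ of degree $g$ with constant term $\pm 1$ and all real roots, at least one of modulus greater than $1$, produces a $g$-dimensional simple abelian variety with an automorphism $f$ satisfying $\lambda_1(f)>1$. Such a $P(x)$ always exists --- for instance, one may take the minimal polynomial of a fundamental unit of a totally real number field of degree $g$, whose existence is guaranteed by Dirichlet's unit theorem. Fixing $c$ so that this particular $\lambda_1(f)<c^2$, the set $\mathcal{B}_g\cap(1,c^2)$ is finite and nonempty, hence attains its minimum, which is then the minimum of $\mathcal{B}_g$.

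The only delicate point is the nonemptiness verification: the simultaneous requirements that $X$ be simple and of dimension exactly $g$ (rather than of dimension a multiple of $g$) preclude trivial examples such as products of elliptic curves, so one must appeal to the construction of Section \ref{Construction of simple abelian varieties} as packaged in Theorem \ref{Main Theorem1}. Once a single nontrivial element of $\mathcal{B}_g$ is in hand, the finiteness argument for $\mathcal{A}_g$ transfers verbatim and the existence of a minimum follows.
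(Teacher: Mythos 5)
Your argument follows the paper's route exactly: the same Vieta-type bound on the characteristic polynomial of $\rho_r(f)$ shows that $\mathcal{B}_g\cap(1,c^2)$ is finite for any $c>1$, and the only remaining point is to exhibit one nontrivial element of $\mathcal{B}_g$ so that some $c$ can be fixed. (The paper itself is terse here, saying only ``by the similar deduction,'' so your explicit treatment of nonemptiness is welcome.) The finiteness step is fine, and so is the idea of feeding a degree-$g$ totally real unit into Theorem \ref{Main Theorem1}.

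There is, however, a small but genuine gap in your nonemptiness argument. Dirichlet's unit theorem gives a totally real number field $F$ of degree $g$ a unit group of rank $g-1\geq 1$, so a non-torsion unit exists, but a \emph{fundamental} unit of $\mathcal{O}_F$ need not generate $F$ over $\mathbb{Q}$; it may lie in a proper subfield (e.g.\ $1+\sqrt{2}$ is fundamental in $\mathbb{Q}(\sqrt{2},\sqrt{3})$ after a change of basis but generates only the quadratic subfield). If $\mathbb{Q}(u)\subsetneq F$ then the minimal polynomial of $u$ has degree $<g$, and Theorem \ref{Main Theorem1} applied to it produces a simple abelian variety of dimension $<g$, not $g$. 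The patch is standard and short: either choose a totally real $F$ of prime degree $\ell$ dividing $g$ (then any non-torsion unit $u$ has $\mathbb{Q}(u)=F$, since $\ell$ is prime and $u\neq\pm1$) and invoke Lemma \ref{Construction1} with $m=g/\ell$, or, if you insist on degree exactly $g$, note that a finitely generated free abelian group of rank $g-1$ cannot be covered by finitely many subgroups of smaller rank (tensor with $\mathbb{Q}$ and use that a vector space over an infinite field is not a finite union of proper subspaces), so some unit of $\mathcal{O}_F$ does not lie in any of the finitely many proper subfields of $F$ and hence generates $F$. With either patch the argument is complete and matches the paper's intended proof.
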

  \begin{remark}
   Fix an integer $g\geq2$.
   The smallest value of first dynamical degrees of automorphisms of $g$-dimensional simple abelian varieties larger than $1$ can be determined in lower dimensions.\par
   For example, if $g=2$, then the smallest first dynamical degree larger than $1$ is $4\mathrm{cos}^2\left(\frac{\pi}{5}\right)=\left(\frac{1+\sqrt{5}}{2}\right)^2=\frac{3+\sqrt{5}}{2}=2.6180\cdots$.\par
   If $g=3$, then the smallest first dynamical degree larger than $1$ is $4\mathrm{cos}^2\left(\frac{\pi}{7}\right)=3.2469\cdots$.\par
   If $g=4$, then the smallest first dynamical degree larger than $1$ is $2\mathrm{cos}\left(\frac{\pi}{5}\right)=\frac{1+\sqrt{5}}{2}=1.6180\cdots$.\par
   If $g=5$, then the smallest first dynamical degree larger than $1$ is $4\mathrm{cos}^2\left(\frac{\pi}{11}\right)=3.6825\cdots$.\par
  \end{remark}

 \subsection{Possible small first dynamical degrees}\label{possible}
  From here, we consider the possible values of the first dynamical degrees of automorphisms of simple abelian varieties which are close to $1$, in each type.
  For a simple abelian variety $X$, $B=\mathrm{End}_\mathbb{Q}(X)$ and $K$ are as in Section \ref{Endomorphism algebra of simple abelian varieties}.
  The next lemma is used later.
  \begin{lemma}\label{cyclotomic}
   Let $P(x)=x^n+a_1x^{n-1}+\cdots+a_n\in\mathbb{Z}[x]$ be an irreducible monic polynomial of degree $n\geq2$ which has only real roots and denote the maximal absolute value of the roots of $P(x)$ by $\mu$.
   If $\mu<2$, then $P(x)$ is a polynomial which satisfies $x^nP\left(x+\frac{1}{x}\right)=\Phi_N(x)$ for some $N\in\mathbb{Z}_{>0}$ where $\Phi_N(x)$ is the $N$-th cyclotomic polynomial.
   Also, the roots of $P(x)$ can be written as $2\mathrm{cos}\left(\frac{2m\pi}{N}\right)$ for some $m\in\mathbb{N}$.\par
   Under the notation, the maximal absolute value of the roots of $P(x)$ can be written as
   \begin{align*}
    \left\{
     \begin{array}{ll}
      2\mathrm{cos}\left(\frac{2\pi}{N}\right) & (N:\text{even})\\ [+5pt]
      2\mathrm{cos}\left(\frac{\pi}{N}\right) & (N:\text{odd})
     \end{array}.
    \right.
   \end{align*}
  \end{lemma}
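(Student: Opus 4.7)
The plan is to pass from $P(x)$ to the reciprocal polynomial $Q(x):=x^{n}P(x+\tfrac{1}{x})$, apply Kronecker's theorem (stated in Section \ref{Corollaries}) to force its roots to be roots of unity, and then use the irreducibility of $P$ to identify $Q$ itself with a single cyclotomic polynomial $\Phi_{N}$. Writing $P(x)=\prod_{i=1}^{n}(x-\alpha_{i})$ one computes
\begin{align*}
 Q(x)=\prod_{i=1}^{n}(x^{2}-\alpha_{i}x+1),
\end{align*}
which is monic of degree $2n$ in $\mathbb{Z}[x]$; since $\abs{\alpha_{i}}<\mu<2$, each quadratic factor has roots $e^{\pm i\theta_{i}}$ with $\alpha_{i}=2\cos\theta_{i}$, so every root of $Q$ lies on the unit circle.

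Next, I would show $Q$ is irreducible over $\mathbb{Q}$. Fix a root $\zeta=e^{i\theta_{1}}$ of $Q$; then $\alpha_{1}=\zeta+\zeta^{-1}$ is a root of the irreducible polynomial $P$, so $[\mathbb{Q}(\alpha_{1}):\mathbb{Q}]=n$. The bound $\abs{\alpha_{1}}<2$ excludes $\zeta=\pm 1$, hence $\zeta\notin\mathbb{R}$ and $[\mathbb{Q}(\zeta):\mathbb{Q}(\alpha_{1})]=2$. Therefore $[\mathbb{Q}(\zeta):\mathbb{Q}]=2n=\deg Q$, so $Q$ coincides with the minimal polynomial of $\zeta$, which is the cyclotomic polynomial $\Phi_{N}$ when $\zeta$ has order $N$. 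This identifies the roots of $P$ as exactly the numbers $2\cos(2m\pi/N)$ with $\gcd(m,N)=1$.

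To extract $\mu$, I would maximise $\abs{2\cos(2m\pi/N)}$ over integers $m$ coprime to $N$, equivalently minimise the distance from $2m$ to the nearest multiple of $N$. When $N$ is even, that minimum equals $2$ and is attained at $m=1$ (distance $1$ is impossible by parity, and distance $0$ would force $m=N/2$, not coprime to $N$ for $N>2$), giving $\mu=2\cos(2\pi/N)$. When $N$ is odd, the minimum equals $1$ and is attained at $m=(N-1)/2$, which is coprime to $N$ because $\gcd(N-1,N)=1$ and $N$ is odd, yielding $\mu=2\cos(\pi/N)>2\cos(2\pi/N)$.

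The hardest part will be the irreducibility of $Q$: the critical role of the strict inequality $\mu<2$ is to keep every preimage $\zeta_{i}$ off the real axis, so that the tower $\mathbb{Q}\subset\mathbb{Q}(\alpha_{1})\subset\mathbb{Q}(\zeta)$ has total degree exactly $\deg Q$. Once that is secured, everything else reduces to Kronecker's theorem and a short trigonometric optimisation.
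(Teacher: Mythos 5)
Your proposal is correct and follows essentially the same route as the paper: form $Q(x)=x^{n}P\left(x+\frac{1}{x}\right)$, apply Kronecker's theorem to place its roots among the roots of unity, identify $Q$ with a single cyclotomic polynomial $\Phi_N$, and then optimise $\abs{2\cos(2m\pi/N)}$ over $m$ coprime to $N$. In fact your degree-tower argument $\mathbb{Q}\subset\mathbb{Q}(\alpha_1)\subset\mathbb{Q}(\zeta)$ supplies the justification for the irreducibility of $Q$ (and shows exactly where $\mu<2$ is used) that the paper only asserts.
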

  \begin{proof}
   By the condition $\mu<2$, the monic polynomial $Q(x)=x^nP(x+\frac{1}{x})\in\mathbb{Z}[x]$ has only roots of modulus $1$.
   Thus, by Kronecker's theorem, $Q(x)$ is a product of cyclotomic polynomials and by the irreducibility of $P(x)$, $Q(x)$ is irreducible.
   Therefore, $Q(x)$ is a cyclotomic polynomial and can be written as $Q(x)=\Phi_N(x)$ for some $N\in\mathbb{Z}_{>0}$.\par
   Then the roots of $P(x)$ can be written as $\zeta_N^m+\frac{1}{\zeta_N^m}$ where  $m\in\mathbb{Z}$ is relatively prime to $N$.\par
   Thus, the roots of $P(x)$ can be rewritten as\\
   \begin{align*}
    \zeta_N^m+\frac{1}{\zeta_N^m}=2\mathrm{cos}\left(\frac{2m\pi}{N}\right).
   \end{align*}
   and also the maximal absolute value of the roots of $P(x)$ is
   \begin{align*}
    \left\{
     \begin{array}{ll}
      2\mathrm{cos}\left(\frac{2\pi}{N}\right) & (N:\text{even})\\ [+5pt]
      2\mathrm{cos}\left(\frac{\pi}{N}\right) & (N:\text{odd})
     \end{array}.
    \right.
   \end{align*}
  \end{proof}

  \begin{flushleft}{\bf{Type 1}}\end{flushleft}\par
   Let $\alpha$ be a root of an irreducible polynomial of the form $P(x)=x^m+a_1x^{m-1}+\cdots\pm 1\in\mathbb{Z}[x]$ which has only real roots.\par
   These cover all automorphisms of simple abelian varieties of Type 1.\par
   As in Theorem \ref{Main Theorem1}, any $\alpha$ which is the root of such a polynomial corresponds to an automorphism of some $nm$-dimensional simple abelian variety ($n\in\mathbb{Z}_{>0}$) by Lemma \ref{Construction1}.\par
   For searching a small first dynamical degree (it is equal to the square of the maximal absolute value of the roots of $P(x)$ as in Section \ref{Calculation}), assume $P(x)$ has only roots which have absolute values less than $2$.\par
   If $m=1$, then the square of the maximal absolute value of the roots of $P(x)$ is $1$.\par
   If $m\geq2$, by Lemma \ref{cyclotomic}, the maximal absolute value of the roots of $P(x)$ is
   \begin{align*}
    \left\{
     \begin{array}{ll}
      2\mathrm{cos}\left(\frac{2\pi}{N}\right) & (N:\text{even})\\ [+5pt]
      2\mathrm{cos}\left(\frac{\pi}{N}\right) & (N:\text{odd})
     \end{array}.
    \right.
   \end{align*}
   Since the constant term of $P(x)$ is $\pm1$,
   by comparing with Theorem \ref{constant term of minimal polynomial}, the possible values of $N$ is $N=3,5,6,7,9,10,11,\ldots$.
   Thus, the first dynamical degrees of this type can be sorted in ascending order as
   \begin{align*}
    1,\ 4\mathrm{cos}^2\left(\frac{\pi}{5}\right),\ 4\mathrm{cos}^2\left(\frac{\pi}{7}\right),\ 4\mathrm{cos}^2\left(\frac{\pi}{9}\right),\ 4\mathrm{cos}^2\left(\frac{\pi}{11}\right).\ 4\mathrm{cos}^2\left(\frac{\pi}{12}\right),\ldots
   \end{align*}
  \begin{flushleft}{\bf{Type 2, Type 3}}\end{flushleft}\par
   Let $K$ be a totally real number field with $[K:\mathbb{Q}]=m$.
   Also, let $\alpha$ be an element of $U_K$ or be an element of $B$ which has a minimal polynomial of the form $x^2+sx+t$ ($s\in\mathcal{O}_K$, $t\in U_K$) over $K$.
   These cover all automorphisms of simple abelian varieties of Type 2 and Type 3.\par
   Without considering the realizability, the possible first dynamical degrees of automorphisms for the former case is the same as in Type 1.\par
   For considering the latter case, as in Section \ref{Calculation}, the first dynamical degree of an automorphism corresponding to $\alpha$ is calculated as the square of the maximal absolute value of the roots of
   \begin{align*}
    \prod_{i=1}^m (x^2+\sigma_i(s)x+\sigma_i(t))
   \end{align*}
   where $\left\{\sigma_i\right\}_{1\leq i\leq m}$ is the set of all $\mathbb{Q}$-embeddings $K\hookrightarrow\mathbb{C}$.
   Now each $\sigma_i(t)$ is a conjugate element of $t\in U_K$ and these elements are the roots of the common irreducible polynomial of the form $P(x)=x^{m'}+a_1x^{m'-1}+\cdots\pm 1\in\mathbb{Z}[x]$ ($m'\leq m$).
   Thus, one of the absolute value of $\sigma_i(t)$'s is not less than $1$ and so we can assume $\abs{t}\geq1$.\par
   If $\abs{t}>1$, the maximal absolute value of the roots of $x^2+\sigma_i(s)x+\sigma_i(t)$ is not less than $\sqrt{\abs{\sigma_i(t)}}$.
   The lower bound of the maximal absolute value of $\abs{\sigma_i(t)}$'s is $2\mathrm{cos}\left(\frac{\pi}{5}\right)=\frac{1+\sqrt{5}}{2}$ as the deduction in Type 1. 
   Thus, $\frac{1+\sqrt{5}}{2}$ is the lower bound of the first dynamical degrees besides $1$ for this case.\par
   If $\abs{t}=1$, since $K$ is totally real, $t=\pm 1$.\par
   If $t=1$, the maximal absolute value of the roots of all over $x^2+\sigma_i(s)x+1$ is achived by one of the roots of $x^2+\sigma_i(s)x+1$ where $\abs{\sigma_i(s)}$ is maximal.
   Now all $\sigma_i(s)$ are the roots of an identical irreducible polynomial and they are all real.
   Thus, the roots of $x^2+\sigma_i(s)x+1$ are either real or of modulus $1$ and so the possible first dynamical degrees for this case all appear in Corollary \ref{possible dynamical degrees}.\par
   If $t=-1$, the maximal absolute value of the roots of all over $x^2+\sigma_i(s)x-1$ is achieved by one of the roots of $x^2+\sigma_i(s)x-1$ where $\abs{\sigma_i(s)}$ is maximal.
   Assume the maximal absolute value is not $1$ and so $s\neq0$.
   All $\sigma_i(s)$ are the roots of common polynomial whose roots are all real.
   Now $\prod_{i=1}^m \sigma_i(s)$ is an integer and by $\sigma_i(s)\neq0$, $\prod_{i=1}^m \abs{\sigma_i(s)}\geq1$ and so for some $i_0$, $\abs{\sigma_{i_0}(s)}\geq1$.
   The roots of polynomial $x^2+\sigma_{i_0}(s)x-1$ can be denoted by $z_{i_0}$, $-\frac{1}{z_{i_0}}$ ($z_i\in\mathbb{R}$) with $\abs[\big]{z_{i_0}-\frac{1}{z_{i_0}}}=\abs{\sigma_{i_0}(s)}\geq1$.
   Thus, $\mathrm{max}\left\{\abs{z_{i_0}},\frac{1}{\abs{z_{i_0}}}\right\}$ is not less than $\frac{1+\sqrt{5}}{2}$.
   Therefore, the first dynamical degree of $\alpha$ is at least $\left(\frac{1+\sqrt{5}}{2}\right)^2$ besides $1$ for this case.
  \begin{flushleft}{\bf{Type 4}}\end{flushleft}\par
   Let $K$ be a CM-field with $[K:\mathbb{Q}]=2m$ and let $\alpha$ be an element which has a minimal polynomial of the form $P(x)\in\mathcal{O}_K[x]$ whose constant term is in $U_K$ and whose degree is not more than $d$.
   These cover all automorphisms of simple abelian varieties of Type 4.\par
   Without considering the realizability, the first dynamical degree of an automorphism corresponding to $\alpha$ is calculated as the square of the maximal absolute value of the roots of
   \begin{align*}
    \prod_{i=1}^{2m} (\sigma_i(P(x)))=\prod_{j=1}^m (\sigma'_j(P(x)\overline{P(x)}))\in\mathbb{Z}[x]
   \end{align*}
   where $\left\{\sigma_i\right\}_{1\leq i\leq 2m}$ (resp. $\left\{\sigma'_j\right\}_{1\leq j\leq m}$) is the set of all $\mathbb{Q}$-embeddings $K\hookrightarrow\mathbb{C}$ (resp. $K_0\hookrightarrow\mathbb{C}$).\par
   The possible values of small first dynamical degrees of this type would be considered in Section \ref{First dynamical degree close to 1}.

\section{First dynamical degrees close to $1$}\label{First dynamical degree close to 1}
 In this section, we consider the next question.
 \begin{question}
  Without fixing the dimension of simple abelian varieties, then does there exist the minimum value of first dynamical degrees of automorphisms?
 \end{question}
 \noindent Denote
 \begin{align*}
  \Delta=\{\text{first dynamical degrees of automorphisms of simple abelian varieties over $\mathbb{C}$}\}\backslash\{1\}
 \end{align*} 
 for convenience.\par
 This quetion is immediately solved negatively if there does not exist the smallest number of the Salem numbers, because all Salem numbers are contained in $\Delta$ by Corollary \ref{realize salem}.
 But this problem is unsolved yet, and also it is conjectured that there is the smallest Salem number (\cite[Chapter 3]{Bor02}), so another proof is needed.\par
 The above question is solved negatively in Theorem \ref{minimum value of Delta} and reduced to the next theorem.
 
 \begin{theorem}\label{automorphism}
  For a prime number $p>3$ and an integer $1\leq k\leq\frac{p-1}{2}$, there are a CM-field $K$, a central simple division algebra $B$ over $K$ of the form $B=\oplus_{i=0}^{p-1} u^{i}L$ with a symbol $u$, a cyclic extension $L/K$ of degree $p$, and $[B:K]=p^2$ such that $\sqrt[p]{\zeta_p^k+\frac{1}{\zeta_p^k}}\in L$.
 \end{theorem}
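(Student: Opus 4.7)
The plan is to take $K=\mathbb{Q}(\zeta_p)$, which is a CM-field of degree $p-1$ with maximal totally real subfield $K_0=\mathbb{Q}(\zeta_p+\zeta_p^{-1})$, and to realize $B$ as a cyclic algebra $(L/K,\sigma,c)$ whose class in the Brauer group of $K$ is nontrivial. Because $B$ will have degree $p$ (prime), nontriviality in $\mathrm{Br}(K)$ automatically forces $B$ to be a division algebra.

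First, I would set $\alpha:=\zeta_p^k+\zeta_p^{-k}\in K_0\subset K$, which is nonzero since $2\cos(2\pi k/p)\neq 0$ for odd primes $p$ and $1\leq k\leq (p-1)/2$. I would then construct the cyclic degree-$p$ extension $L/K$ in two cases. If $\alpha\notin K^{\times p}$, take $L=K(\sqrt[p]{\alpha})$; Kummer theory (valid because $\zeta_p\in K$) gives a cyclic extension of degree $p$, and $\sqrt[p]{\alpha}\in L$ by construction. If $\alpha\in K^{\times p}$, a $p$-th root of $\alpha$ already lies in $K$; I then take $L=K(\sqrt[p]{q})$ for any rational prime $q\neq p$, which is unramified in $K$, so every prime of $K$ above $q$ has valuation $1$ on $q$, whence $q\notin K^{\times p}$ and Kummer again produces a cyclic degree-$p$ extension, while $\sqrt[p]{\alpha}\in K\subset L$. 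In either case, fix a generator $\sigma$ of $\mathrm{Gal}(L/K)$ and a parameter $c\in K^{\times}$ to be determined, and form the cyclic algebra
\begin{equation*}
 B=\bigoplus_{i=0}^{p-1}u^{i}L,\qquad u^{p}=c,\quad u\ell u^{-1}=\sigma(\ell)\ (\ell\in L),
\end{equation*}
which is a central simple $K$-algebra of dimension $p^{2}$ of the required form.

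Next I would choose $c$ so that $B$ is a division algebra, using local class field theory. By \v{C}ebotarev's density theorem (Theorem \ref{Cebotarev's density theorem}) applied to the degree-$p$ cyclic extension $L/K$, the set of primes $\mathfrak{q}$ of $K$ whose Frobenius generates $\mathrm{Gal}(L/K)$---equivalently, the primes inert in $L/K$---has density $(p-1)/p$ and is in particular infinite, so I fix one such $\mathfrak{q}$ with residue characteristic distinct from $p$. Because $v_{\mathfrak{q}}\colon K^{\times}\to\mathbb{Z}$ is surjective, I can pick $c\in K^{\times}$ with $v_{\mathfrak{q}}(c)=1$. At $\mathfrak{q}$ the completion $L_{\mathfrak{q}}/K_{\mathfrak{q}}$ is the unramified cyclic extension of degree $p$, whose norm subgroup $N_{L_{\mathfrak{q}}/K_{\mathfrak{q}}}(L_{\mathfrak{q}}^{\times})$ equals $\{x\in K_{\mathfrak{q}}^{\times}\colon v_{\mathfrak{q}}(x)\equiv 0\pmod p\}$. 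Since $v_{\mathfrak{q}}(c)=1$ is coprime to $p$, $c$ is not a local norm, so $B\otimes_{K}K_{\mathfrak{q}}$ is nontrivial in $\mathrm{Br}(K_{\mathfrak{q}})$; hence $B$ is nontrivial in $\mathrm{Br}(K)$, and as its degree $p$ is prime, its index must equal $p$, making $B$ a division algebra.

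The main obstacle is certifying that $B$ is nonsplit: I address it by engineering local nontriviality at a single well-chosen prime, combining \v{C}ebotarev (to produce a prime of $K$ inert in $L$) with a slot $c$ of $\mathfrak{q}$-adic valuation coprime to $p$, which forces a nontrivial Hasse invariant. A minor preliminary issue is the a priori possibility that $\alpha$ is already a $p$-th power in $K=\mathbb{Q}(\zeta_p)$; the two-case construction of $L$ disposes of this while still securing $\sqrt[p]{\alpha}\in L$.
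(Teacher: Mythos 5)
Your proof is correct, and its global architecture coincides with the paper's: take $K=\mathbb{Q}(\zeta_p)$, take $L$ a degree-$p$ Kummer extension of $K$, use \v{C}ebotarev to produce a prime of $K$ inert in $L$, and build $B$ as a cyclic algebra whose slot fails to be a norm because of that inert prime. The differences are in the two supporting lemmas. First, where the paper proves outright that $\zeta_p^k+\zeta_p^{-k}\notin\mathbb{Q}(\zeta_p)^{\times p}$ (Lemma \ref{cyclic extension1}, an explicit mod-$p$ computation, and the only place the hypothesis $p>3$ is really used), you hedge with a case split; this suffices for the literal statement, but note that in your degenerate second case $\sqrt[p]{\gamma}$ lies in $K$ itself, which would break the paper's downstream use of this theorem (the computation of the minimal polynomial of $\sqrt[p]{\gamma}$ over $\mathbb{Q}$ in Theorem \ref{minimum value of Delta}), so the paper cannot avoid proving that lemma. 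Second, where the paper certifies that the slot $q$ has order exactly $p$ in $K^\times/\mathrm{N}_{L/K}(L^\times)$ via a global ideal-factorization argument (Lemma \ref{multiplicity}: the $\mathfrak{q}$-multiplicity of a norm is divisible by $p$ because $\mathfrak{q}$ is Galois-stable, while the $\mathfrak{q}$-multiplicity of $q\mathcal{O}_L$ is $1$), you localize at $\mathfrak{q}$ and observe that an element of $\mathfrak{q}$-adic valuation $1$ cannot be a norm from the unramified degree-$p$ local extension, then pass from local nonsplitness to global nonsplitness and use primality of the degree. These are essentially the same valuation computation packaged globally versus locally; your version buys a more flexible choice of slot (any $c$ with $v_{\mathfrak{q}}(c)=1$ rather than the rational prime $q$ itself) at the cost of invoking the standard splitting criterion for cyclic algebras, while the paper's version stays entirely within the ideal-theoretic framework it sets up in Section 2 and feeds directly into Proposition \ref{construct division algebra}.
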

 For proving this theorem, the next proposition is useful.
 \begin{proposition}[{cf.\ \cite[Chapter 15.1, Corollary d]{Pie82}}]\label{construct division algebra}
  Let $E/F$ be a cyclic extension of fields (i.e., a Galois extension with the cyclic Galois group) of degree $n$ and denote $G=\mathrm{Gal}(E/F)$ and its generator by $\sigma$.\par
  Let $a\in F^{\times}$ be an element and assume the order of $a$ in $F^{\times}/\mathrm{N}_{E/F}(E^{\times})$ is exactly $n$.
  Take a symbol $u$ as $u^n=a$.\par
  Define $B=\oplus_{i=0}^{n-1} u^{i}E$ and construct the multiplication on $B$ as $e\cdot u=u\cdot\sigma(e)$ for $e\in E$.
  Then $B$ is a division ring and also a central simple algebra over $F$ with $[B:F]=n^2$.
 \end{proposition}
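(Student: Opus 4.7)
The plan is to build the data $(K,L,\sigma,a)$ needed for Proposition \ref{construct division algebra}. I would take $K=\mathbb{Q}(\zeta_p)$, which is a CM-field of degree $p-1$ with maximal totally real subfield $\mathbb{Q}(\zeta_p+\zeta_p^{-1})$, set $\alpha:=\zeta_p^k+\zeta_p^{-k}\in K^\times$, and define $L:=K(\sqrt[p]{\alpha})$. Since $\zeta_p\in K$, Kummer theory gives $L/K$ cyclic of degree dividing $p$, equal to $p$ exactly when $\alpha\notin(K^\times)^p$; in that case $\sqrt[p]{\zeta_p^k+\zeta_p^{-k}}\in L$ by construction. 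So the proof reduces to two verifications: (a) $\alpha\notin(K^\times)^p$, and (b) there exists $a\in K^\times$ whose class in $K^\times/N_{L/K}(L^\times)$ has order exactly $p$.

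For (a) --- which I expect is the main obstacle --- I would argue locally at the unique prime $\mathfrak{p}$ of $K$ above $p$. Here $K_\mathfrak{p}=\mathbb{Q}_p(\zeta_p)$ is totally ramified of degree $e=p-1$ with uniformizer $\pi=1-\zeta_p$. Binomial expansion of $(1-\pi)^{\pm k}$ gives $\alpha = 2+k^2\pi^2+O(\pi^3)$, and since $1\le k\le(p-1)/2<p$ the coefficient $k^2$ is a unit, so $v_\mathfrak{p}(\alpha-2)=2$. As $v_\mathfrak{p}\bigl(\omega(2)-2\bigr)\ge v_\mathfrak{p}(p)=p-1\ge 4>2$ for the Teichm\"uller lift $\omega(2)$, it follows that $\alpha/\omega(2)\in U^{(2)}\setminus U^{(3)}$ for the principal unit filtration $U^{(m)}=1+\mathfrak{p}^m$. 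On the other hand, for any $w=1+y\in U^{(1)}$ a direct computation using $v_\mathfrak{p}\bigl(\binom{p}{i}\bigr)=p-1$ for $1\le i\le p-1$ gives $v_\mathfrak{p}(w^p-1)\ge p$, so $w^p\in U^{(p)}\subseteq U^{(3)}$. If $\alpha=v^p$ in $K_\mathfrak{p}^\times$, writing $v=\omega\,w$ with $\omega\in\mu_{p-1}$ and $w\in U^{(1)}$ forces $\omega^p=\omega(2)$ (reducing modulo $\mathfrak{p}$) and hence $w^p=\alpha/\omega(2)$, contradicting $U^{(p)}\subseteq U^{(3)}$ versus $\alpha/\omega(2)\notin U^{(3)}$. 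Hence $\alpha\notin(K_\mathfrak{p}^\times)^p$, and a fortiori $\alpha\notin(K^\times)^p$.

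For (b), since $L/K$ has degree $p$, for any $x\in K^\times$ we have $N_{L/K}(x)=x^p$, so $K^\times/N_{L/K}(L^\times)$ is killed by $p$ and it suffices to exhibit one $a$ outside the norm subgroup. Applying Theorem \ref{Cebotarev's density theorem} to the cyclic extension $L/K$ and a generator $\sigma$ of $\mathrm{Gal}(L/K)\cong\mathbb{Z}/p\mathbb{Z}$, the set of prime ideals $\mathfrak{q}$ of $K$ inert in $L$ (those with Frobenius equal to $\sigma$) has positive density and in particular is infinite. Pick one such $\mathfrak{q}$ and choose $a\in K^\times$ with $v_\mathfrak{q}(a)=1$ --- possible because the inverse ideal class of $\mathfrak{q}$ contains an integral ideal $\mathfrak{b}$ coprime to $\mathfrak{q}$, making $\mathfrak{q}\mathfrak{b}$ principal. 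At the unramified inert prime $\mathfrak{q}$ the local norm subgroup equals $\mathcal{O}_{K_\mathfrak{q}}^\times\cdot\pi_\mathfrak{q}^{p\mathbb{Z}}$, so $a$ is not a local norm there; by the Hasse norm theorem for cyclic extensions of number fields, $a\notin N_{L/K}(L^\times)$, and consequently its class has order exactly $p$.

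With (a) and (b) established, Proposition \ref{construct division algebra} applied to the datum $(L/K,\sigma,a)$ with $n=p$ produces the central simple division algebra $B=\bigoplus_{i=0}^{p-1}u^iL$ over $K$ with $u^p=a$ and $[B:K]=p^2$. Since $\sqrt[p]{\zeta_p^k+\zeta_p^{-k}}\in L$ by construction, all assertions of the theorem follow.
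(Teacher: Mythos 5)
The statement you were asked to prove is the general cyclic-algebra criterion: for any cyclic extension $E/F$ of degree $n$ with generator $\sigma$ and any $a\in F^{\times}$ whose class in $F^{\times}/\mathrm{N}_{E/F}(E^{\times})$ has order exactly $n$, the algebra $B=\oplus_{i=0}^{n-1}u^{i}E$ with $u^n=a$, $e\cdot u=u\cdot\sigma(e)$ is a central simple \emph{division} algebra over $F$ of dimension $n^2$. Your proposal never addresses this. Instead you construct a particular datum $(K,L,\sigma,a)$ with $K=\mathbb{Q}(\zeta_p)$, $L=K(\sqrt[p]{\zeta_p^k+\zeta_p^{-k}})$, and then \emph{invoke} Proposition \ref{construct division algebra} as a black box to conclude. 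That is circular with respect to the statement under review: nothing in your argument shows that $B$ is simple with center $F$, that $[B:F]=n^2$, or — the real content — that the hypothesis on the order of $a$ modulo norms forces $B$ to have no zero divisors (e.g.\ via the fact that the Brauer class of the cyclic algebra $(E/F,\sigma,a)$ has order equal to the order of $a$ in $F^{\times}/\mathrm{N}_{E/F}(E^{\times})$, together with an exponent-versus-index argument, or Pierce's direct proof). For the record, the paper does not prove this proposition either; it cites \cite[Chapter 15.1, Corollary d]{Pie82}, so a blind proof would have to reproduce an argument of that type.

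What you actually wrote is, in effect, an alternative proof of Theorem \ref{automorphism}, and as such it looks essentially sound and genuinely different from the paper's: for $\zeta_p^k+\zeta_p^{-k}\notin(K^{\times})^{p}$ you argue locally at the prime above $p$ via the principal-unit filtration (the expansion $\alpha=2+k^2\pi^2+O(\pi^3)$ with $\pi=1-\zeta_p$, against $w^p\in U^{(p)}$), where the paper's Lemma \ref{cyclic extension1} does a mod-$p$ coefficient computation in $\mathbb{Z}[\zeta_p]$; and for producing a non-norm $a$ you combine \v{C}ebotarev at an inert prime with the Hasse norm theorem, where the paper avoids class field theory by the elementary valuation-counting Lemma \ref{multiplicity} applied to a rational prime $q$. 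Those are reasonable trade-offs (your route is shorter but leans on heavier machinery), but they discharge a different statement; as a proof of Proposition \ref{construct division algebra} the proposal has a complete gap.
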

 \begin{remark}\label{remark about the order}
  Assume $a\in\mathcal{O}_F$, then the division ring $B$ constructed in Proposition \ref{construct division algebra} has an order $\mathcal{O}=\oplus_{i=0}^{n-1} u^{i}\mathcal{O}_E$ since $\sigma$ projects $\mathcal{O}_E$ into $\mathcal{O}_E$.
 \end{remark}
 Before proceeding to the proof, some lemmas are required.
 \begin{lemma}\label{CM-field}
  Let $p$ be an odd prime number.\par
  Then, $\mathbb{Q}\left(\zeta_p+\frac{1}{\zeta_p}\right)$ is a totally real number field and $\mathbb{Q}(\zeta_p)$ is a totally complex quadratic extension of $\mathbb{Q}\left(\zeta_p+\frac{1}{\zeta_p}\right)$.
 \end{lemma}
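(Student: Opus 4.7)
The plan is to verify the three implicit assertions in the lemma: (i) that $\mathbb{Q}(\zeta_p+1/\zeta_p)$ is totally real, (ii) that $\mathbb{Q}(\zeta_p)$ is a quadratic extension of it, and (iii) that $\mathbb{Q}(\zeta_p)$ is totally complex. All three are quick consequences of basic properties of cyclotomic fields.

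For (i), the idea is simply that $\zeta_p + 1/\zeta_p = \zeta_p + \overline{\zeta_p} = 2\cos(2\pi/p) \in \mathbb{R}$. Any $\mathbb{Q}$-embedding $\sigma\colon\mathbb{Q}(\zeta_p+1/\zeta_p)\hookrightarrow\mathbb{C}$ sends this element to one of its Galois conjugates, and the Galois conjugates of $\zeta_p+1/\zeta_p$ are all of the form $\zeta_p^k+\zeta_p^{-k}=2\cos(2\pi k/p)$ for $1\le k\le p-1$, which are real. Hence the image of every $\mathbb{Q}$-embedding lies in $\mathbb{R}$.

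For (ii), the element $\zeta_p$ is a root of $x^2-(\zeta_p+1/\zeta_p)x+1\in\mathbb{Q}(\zeta_p+1/\zeta_p)[x]$, so the extension has degree at most $2$. Since $p\geq3$, $\zeta_p$ is non-real, whereas by (i) the field $\mathbb{Q}(\zeta_p+1/\zeta_p)$ is contained in $\mathbb{R}$; thus $\zeta_p\notin\mathbb{Q}(\zeta_p+1/\zeta_p)$ and the extension has degree exactly $2$. (Alternatively, one can compare degrees: $[\mathbb{Q}(\zeta_p):\mathbb{Q}]=\varphi(p)=p-1$ whereas $[\mathbb{Q}(\zeta_p+1/\zeta_p):\mathbb{Q}]=(p-1)/2$ by the relation between $\Phi_p$ and $\Psi_p$ discussed earlier in the excerpt.)

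For (iii), every $\mathbb{Q}$-embedding $\tau\colon\mathbb{Q}(\zeta_p)\hookrightarrow\mathbb{C}$ sends $\zeta_p$ to some $\zeta_p^k$ with $1\le k\le p-1$. For $p$ an odd prime, no such $\zeta_p^k$ is real: the only real roots of unity are $\pm1$, and $\zeta_p^k=\pm1$ would force $p\mid k$ or $p\mid 2k$, both impossible in this range. Thus $\tau(\mathbb{Q}(\zeta_p))\not\subset\mathbb{R}$, so $\mathbb{Q}(\zeta_p)$ is totally complex. Combined with (ii), this realizes $\mathbb{Q}(\zeta_p)$ as a totally complex quadratic extension of the totally real field $\mathbb{Q}(\zeta_p+1/\zeta_p)$, completing the proof. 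There is no genuine obstacle here: the argument is essentially a direct verification using standard facts about roots of unity.
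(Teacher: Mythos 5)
Your proposal is correct and follows essentially the same route as the paper: both arguments verify total reality by noting that every $\mathbb{Q}$-embedding sends $\zeta_p+\frac{1}{\zeta_p}$ to some $\zeta_p^k+\zeta_p^{-k}\in\mathbb{R}$, and both identify the extension $\mathbb{Q}(\zeta_p)/\mathbb{Q}\left(\zeta_p+\frac{1}{\zeta_p}\right)$ as quadratic with $\mathbb{Q}(\zeta_p)$ totally complex. Your write-up is slightly more explicit than the paper's (which simply asserts the degree-$2$ claim and the total complexity of $\mathbb{Q}(\zeta_p)$), but there is no substantive difference.
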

 \begin{proof}
  \begin{align*}
   \left[\mathbb{Q}(\zeta_p):\mathbb{Q}\left(\zeta_p+\frac{1}{\zeta_p}\right)\right]=2,\left[\mathbb{Q}(\zeta_p):\mathbb{Q}\right]=p-1
  \end{align*}
  imply
  \begin{align*}
   \left[\mathbb{Q}\left(\zeta_p+\frac{1}{\zeta_p}\right):\mathbb{Q}\right]=\frac{p-1}{2}.
  \end{align*}
  For $1\leq i\leq\frac{p-1}{2}$, define
  \begin{align*}
   \begin{array}{cccccc}
    \sigma_i\colon & \mathbb{Q}\left(\zeta_p+\frac{1}{\zeta_p}\right) & \hookrightarrow & \mathbb{Q}(\zeta_p) & \hookrightarrow & \mathbb{C}\\
    & z & \mapsto & z & & \\
    & & & \zeta_p & \mapsto & \zeta_p^{i} 
   \end{array}.
  \end{align*}
  These are all different and so they are the $\frac{p-1}{2}$ numbers of $\mathbb{Q}$-embeddings.\par
  $\zeta_p+\frac{1}{\zeta_p}$ maps to $\zeta_p^i+\frac{1}{\zeta_p^i}\in\mathbb{R}$ via $\sigma_i$, and so $\mathbb{Q}\left(\zeta_p+\frac{1}{\zeta_p}\right)$ is a totally real number field.\par
  $\mathbb{Q}(\zeta_p)\supset\mathbb{Q}(\zeta_p+\frac{1}{\zeta_p})$ is a quadratic extension and $\mathbb{Q}(\zeta_p)$ is totally complex and so the lemma holds. 
 \end{proof}
 \begin{lemma}\label{irreducibility}
  Let $F$ be a field.\par
  For a prime number $p$ and $a\in F^{\times}\backslash F^{\times p}$, $x^p-a\in F[x]$ is an irreducible polynomial.
 \end{lemma}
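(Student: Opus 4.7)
I would argue by contradiction. Suppose $x^p - a = g(x)h(x)$ with $g, h \in F[x]$ monic and non-constant. Fix an algebraic closure $\overline{F}$ of $F$ and a root $\alpha \in \overline{F}$ of $x^p - a$, so $\alpha^p = a$. The roots of $x^p - a$ in $\overline{F}$ are of the form $\zeta^j \alpha$ where $j \in \{0, 1, \ldots, p-1\}$ and $\zeta \in \overline{F}$ is a primitive $p$-th root of unity (in characteristic $p$ we have simply $x^p - a = (x-\alpha)^p$, which the argument below still covers by taking $\zeta = 1$). Write $i := \deg g$, so $1 \le i \le p-1$ and in particular $\gcd(i, p) = 1$.

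The factor $g$ is a product of $i$ of these roots in $\overline{F}[x]$, so its constant term has the shape $g(0) = \pm \alpha^i \zeta^k$ for some integer $k$. Because $g \in F[x]$, the element $\beta := \alpha^i \zeta^k$ lies in $F$. Raising to the $p$-th power and using $\alpha^p = a$ and $\zeta^p = 1$ yields $\beta^p = a^i$. Hence $a^i \in F^{\times p}$, and this is the content of the proof; the rest is bookkeeping.

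Since $\gcd(i, p) = 1$, Bézout gives integers $u, v$ with $ui + vp = 1$. Then
\[
a \;=\; a^{ui + vp} \;=\; (a^i)^u (a^v)^p \;=\; (\beta^u a^v)^p \;\in\; F^{\times p},
\]
contradicting the hypothesis $a \notin F^{\times p}$. Therefore no non-trivial factorization exists and $x^p - a$ is irreducible in $F[x]$.

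The only step that warrants care is verifying that $\beta := \alpha^i \zeta^k \in F$ (this uses only that $g(0) \in F$, up to sign), and ensuring the argument is uniform in the characteristic; as noted, in characteristic $p$ one simply takes $\zeta = 1$ and the same computation goes through. No deeper machinery is needed.
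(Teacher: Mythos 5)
Your argument is correct and is essentially the same as the paper's: both extract the constant term of a putative factor of degree $i$ in a splitting field, observe that its $p$-th power is $a^i\in F^{\times p}$, and conclude $a\in F^{\times p}$ from $\gcd(i,p)=1$. Your write-up is slightly more explicit about the sign, the roots $\zeta^j\alpha$, and the characteristic-$p$ case, but there is no substantive difference.
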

 \begin{proof}
  Let $F'$ be a splitting field of $x^p-a\in F[x]$.\par
  Assume $x^p-a\in F[x]$ is reducible and denote $x^p-a=f(x)g(x)\in F[x]$ with $f(x)=(x-z_1)\cdots(x-z_n),g(x)=(x-z_{n+1})\cdots(x-z_p)\in F[x]$ for $z_1, z_2,\ldots, z_p\in F'$ ($1\leq n<p$).\par
  Then $z_1\cdots z_n\in F^\times$ and so $(z_1\cdots z_n)^p=a^n\in F^{\times p}$ and so by $\mathrm{gcd}(n,p)=1$, $a\in F^{\times p}$, a contradiction.
 \end{proof}
 \begin{lemma}\label{cyclic extension1}
  For a prime number $p>3$ and an integer $1\leq k\leq\frac{p-1}{2}$, $\mathbb{Q}\left(\zeta_p,\sqrt[p]{\zeta_p^k+\frac{1}{\zeta_p^k}}\right)/\mathbb{Q}(\zeta_p)$ is a cyclic extension of degree $p$. 
 \end{lemma}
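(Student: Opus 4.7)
Since $F := \mathbb{Q}(\zeta_p)$ contains a primitive $p$-th root of unity, Kummer theory reduces the claim to showing that $\alpha := \zeta_p^k + \zeta_p^{-k}$ is not a $p$-th power in $F^\times$: if $\alpha \notin F^{\times p}$, then by Lemma \ref{irreducibility} the polynomial $x^p - \alpha$ is irreducible in $F[x]$, so $L := F(\sqrt[p]{\alpha})$ has degree $p$ over $F$, and the assignment $\sqrt[p]{\alpha} \mapsto \zeta_p \sqrt[p]{\alpha}$ generates a cyclic Galois group of order $p$. The entire task is therefore to verify $\alpha \notin F^{\times p}$.

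I would do this by a local argument at the prime $\mathfrak{p} = (1-\zeta_p)$ of $\mathcal{O}_F$, which is totally ramified over $p$ with uniformizer $\pi := 1 - \zeta_p$ and $v_\mathfrak{p}(p) = p - 1$. Substituting $\zeta_p = 1 - \pi$ and expanding $(1-\pi)^k$ alongside the convergent $\mathfrak{p}$-adic series
\[
(1-\pi)^{-k} = \sum_{i \geq 0} \binom{k+i-1}{i}\, \pi^i,
\]
I find that the linear terms cancel and the coefficient of $\pi^2$ is $\binom{k}{2} + \binom{k+1}{2} = k^2$. Because $1 \leq k \leq (p-1)/2$ forces $p \nmid k$, $k$ is a unit modulo $\mathfrak{p}$, so $v_\mathfrak{p}(\alpha - 2) = 2$ exactly.

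To conclude, suppose for contradiction that $\alpha = \beta^p$ in $F$, hence in the completion $F_\mathfrak{p}$. Fermat's little theorem in the residue field $\mathcal{O}_F/\mathfrak{p} \simeq \mathbb{F}_p$ gives $\beta \equiv \beta^p = \alpha \equiv 2 \pmod{\mathfrak{p}}$. Writing $\beta = 2 + \delta$ with $v_\mathfrak{p}(\delta) \geq 1$ and expanding binomially, each intermediate term $\binom{p}{i}\, 2^{p-i}\, \delta^i$ ($1 \leq i \leq p-1$) has valuation at least $(p-1) + i \geq p$, using $v_\mathfrak{p}(\binom{p}{i}) = p - 1$, and $v_\mathfrak{p}(\delta^p) \geq p$, so $v_\mathfrak{p}(\beta^p - 2^p) \geq p$. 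Combined with $v_\mathfrak{p}(2^p - 2) = v_\mathfrak{p}(2^{p-1} - 1) \geq p - 1$ (from $p \mid 2^{p-1} - 1$ in $\mathbb{Z}$), the ultrametric inequality yields $v_\mathfrak{p}(\alpha - 2) \geq p - 1 \geq 4$ for $p \geq 5$, contradicting $v_\mathfrak{p}(\alpha - 2) = 2$.

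The main obstacle is pinning down $v_\mathfrak{p}(\alpha - 2)$ exactly; the identity $\binom{k}{2} + \binom{k+1}{2} = k^2$ together with $p \nmid k$ is what makes the $\pi^2$-coefficient a unit. The hypothesis $p > 3$ is sharp in this argument: for $p = 3$ one would have $v_\mathfrak{p}(\alpha - 2) = 2 = p - 1$, the two bounds no longer conflict, and indeed $\alpha = -1$ is a cube in $\mathbb{Q}(\zeta_3)$.
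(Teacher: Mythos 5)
Your proposal is correct, and the reduction (separability and normality are automatic, the generator $\sqrt[p]{\alpha}\mapsto\zeta_p\sqrt[p]{\alpha}$ gives cyclicity, and everything hinges on $\alpha=\zeta_p^k+\zeta_p^{-k}\notin\mathbb{Q}(\zeta_p)^{\times p}$ via Lemma \ref{irreducibility}) is exactly the paper's. Where you diverge is in the proof of the non-$p$-th-power claim. The paper argues globally: it writes a hypothetical $p$-th root as $\alpha=a_{p-2}\zeta_p^{p-2}+\cdots+a_0\in\mathbb{Z}[\zeta_p]$, expands the $p$-th power modulo $p$ by the multinomial theorem, and derives a contradiction by comparing the resulting coefficients $b_i$ against the relation $\zeta_p^{p-1}+\cdots+1=0$. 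You argue locally at the totally ramified prime $\mathfrak{p}=(1-\zeta_p)$: the exact computation $v_\mathfrak{p}(\alpha-2)=2$ (via $\binom{k}{2}+\binom{k+1}{2}=k^2$ and $p\nmid k$), against the lower bound $v_\mathfrak{p}(\beta^p-2)\geq p-1$ for any $\mathfrak{p}$-adic unit $\beta$, which I checked and which is sound (note $v_\mathfrak{p}(2)=0$ since $p$ is odd, and $\beta$ is automatically a $\mathfrak{p}$-adic unit since $\alpha$ is). Your route buys a cleaner and more conceptual contradiction than the paper's coefficient comparison, whose final step (``by combining with $p>3$, the equation $(\ast)$ cannot hold'') is left rather terse; it also isolates precisely where $p>3$ enters and shows the bound is sharp at $p=3$, where $\alpha=-1$ is indeed a cube. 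The paper's version stays entirely in $\mathbb{Z}[\zeta_p]$ with no valuations, which is marginally more elementary, though your series for $(1-\pi)^{-k}$ could equally be replaced by the finite expansion of $\zeta_p^{p-k}=(1-\pi)^{p-k}$ to avoid invoking the completion.
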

 \begin{proof}
  The extension $\mathbb{Q}\left(\zeta_p,\sqrt[p]{\zeta_p^k+\frac{1}{\zeta_p^k}}\right)/\mathbb{Q}(\zeta_p)$ is separable since $\mathbb{Q}$ has characteristic $0$.\par
  Any root of $x^p-\left(\zeta_p^k+\frac{1}{\zeta_p^k}\right)$ can be written as $\zeta_p^i\sqrt[p]{\zeta_p^k+\frac{1}{\zeta_p^k}}$ for some integer $i$ and it is contained in $\mathbb{Q}\left(\zeta_p,\sqrt[p]{\zeta_p^k+\frac{1}{\zeta_p^k}}\right)$ and so the extension $\mathbb{Q}\left(\zeta_p,\sqrt[p]{\zeta_p^k+\frac{1}{\zeta_p^k}}\right)/\mathbb{Q}(\zeta_p)$ is normal.
  For proving $\mathbb{Q}\left(\zeta_p,\sqrt[p]{\zeta_p^k+\frac{1}{\zeta_p^k}}\right)/\mathbb{Q}(\zeta_p)$ is a field extension of degree $p$, by Lemma \ref{irreducibility}, it is enough to prove $\zeta_p^k+\frac{1}{\zeta_p^k}\notin{\mathbb{Q}(\zeta_p)}^{\times p}$.\par
  Assume $\zeta_p^k+\frac{1}{\zeta_p^k}\in{\mathbb{Q}(\zeta_p)}^{\times p}$ and denote $\zeta_p^k+\frac{1}{\zeta_p^k}=\alpha^p$ for $\alpha\in{\mathbb{Q}(\zeta_p)}^{\times}$.
  Since $\zeta_p^k+\frac{1}{\zeta_p^k}$ is an algebraic integer, so is $\alpha$.
  The ring of integer of $\mathbb{Q}(\zeta_p)$ is $\mathbb{Z}[\zeta_p]$ (cf.\ \cite[Chapter 2.9]{Sam70}), and so $\alpha$ can be written as $\alpha=a_{p-2}\zeta_p^{p-2}+\cdots+a_1\zeta_p+a_0$ with $a_i\in\mathbb{Z}$.
  The equation
  \begin{equation}
   (a_{p-2}\zeta_p^{p-2}+\cdots+a_1\zeta_p+a_0)^p-(\zeta_p^{p-k}+\zeta_p^k)=0 \tag*{($\ast$)}
  \end{equation}
  holds and the left side is transformed to
  \begin{align*}
   b_{p-1}\zeta_p^{p-1}+\cdots+b_1\zeta_p+b_0
  \end{align*}
  with
  \begin{align*}
   &b_0\equiv a_{p-2}^p+\cdots+a_0^p\ (\mathrm{mod}\ p)\\
   &b_k\equiv -1\ (\mathrm{mod}\ p)\\
   &b_{p-k}\equiv -1\ (\mathrm{mod}\ p)\\
   &b_i\equiv0\ (\mathrm{mod}\ p)\quad(1\leq i\leq p-1, i\neq k,p-k)
  \end{align*}
  by calculating the combinatorial numbers.\par
  The minimal polynomial of $\zeta_p$ is $x^{p-1}+\cdots+x+1\in\mathbb{Z}[x]$ and so by combining with $p>3$, the equation ($\ast$) cannot hold, a contradiction.\par
  Thus, $\zeta_p^k+\frac{1}{\zeta_p^k}\notin{\mathbb{Q}(\zeta_p)}^{\times p}$ and so the extension $\mathbb{Q}\left(\zeta_p,\sqrt[p]{\zeta_p^k+\frac{1}{\zeta_p^k}}\right)/\mathbb{Q}(\zeta_p)$ is of degree $p$ and the Galois group is cyclic.
 \end{proof}

 \begin{lemma}\label{construction of anti-involution}
  Let $p$ be an odd prime number and denote $K=\mathbb{Q}(\zeta_p)$, $F=\mathbb{Q}\left(\zeta_p+\frac{1}{\zeta_p}\right)$.
  Let $L=\mathbb{Q}(\zeta_p,\sqrt[p]{a})$ be a cyclic extension of $\mathbb{Q}(\zeta_p)$ for $a\in\mathbb{Q}\left(\zeta_p+\frac{1}{\zeta_p}\right)$ with $[\mathbb{Q}(\zeta_p,\sqrt[p]{a}):\mathbb{Q}(\zeta_p)]=p$.
  Define the map
  \begin{align*}
   \begin{array}{cccc}
    \rho\colon & \mathbb{Q}(\zeta_p,\sqrt[p]{a}) & \rightarrow & \mathbb{Q}(\zeta_p,\sqrt[p]{a})\\
    & \zeta_p & \mapsto & \zeta_p \\
    & \sqrt[p]{a} & \mapsto & \zeta_p\sqrt[p]{a}
   \end{array}
  \end{align*}
  as $\mathrm{Gal}(L/K)$ is generated by $\rho$.
  Define $B=\oplus_{i=0}^{p-1} u^{i}L$ for a symbol $u$ with $u^p\in\mathbb{Q}(\zeta_p)^\times$ and multiplication on $B$ as $s\cdot u=u\cdot\rho(s)$ for $s\in L$.\par
  On this condition, there exists an anti-involution on $B$ of second kind.
  Moreover, by assuming that $B$ is a division ring, then there exists a positive anti-involution on $B$ of second kind.
 \end{lemma}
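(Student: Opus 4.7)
The plan is to construct an explicit anti-involution $\phi$ of the second kind on $B$ by extending complex conjugation of $K$ to $L$ in a canonical way, declaring $\phi(u)=u$, and then invoking Theorem \ref{existence of positive anti-involution} to upgrade it to a positive one once $B$ is assumed divisional.

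First I would extend the complex conjugation $\tau$ on $K=\mathbb{Q}(\zeta_p)$ to an involutive field automorphism of $L=K(\sqrt[p]{a})$. Since $a\in F=\mathbb{Q}(\zeta_p+\zeta_p^{-1})$ is totally real, one may fix a real $p$-th root $\sqrt[p]{a}$ under the standard embedding and define $\tilde\tau\colon L\to L$ by $\tilde\tau(\zeta_p)=\zeta_p^{-1}$ and $\tilde\tau(\sqrt[p]{a})=\sqrt[p]{a}$; this is a well-defined field automorphism of $L$ squaring to the identity, with $\tilde\tau|_K=\tau$. A direct evaluation on the two generators yields the dihedral relation $\tilde\tau\rho\tilde\tau=\rho^{-1}$ in $\mathrm{Gal}(L/F)$, or equivalently $\rho^{-1}\tilde\tau=\tilde\tau\rho$. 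Taking then the specialization $u^p\in F^\times$ of the hypothesis $u^p\in K^\times$ (which is the choice that is actually needed in the subsequent application in Theorem \ref{automorphism}), define $\phi\colon B\to B$ on $B=\oplus_{i=0}^{p-1}u^iL$ by $\phi|_L=\tilde\tau$ and $\phi(u)=u$, extended anti-multiplicatively via $\phi(u^is)=\tilde\tau(s)u^i$.

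The three axioms can then be verified directly. Involutivity $\phi^2=\mathrm{id}$ follows from $\tilde\tau^2=\mathrm{id}_L$ together with $\phi(u)=u$. Compatibility with $u^p$, namely $\phi(u^p)=\phi(u)^p$, reduces to $\tilde\tau(u^p)=u^p$, which is where the hypothesis $u^p\in F$ enters. Anti-multiplicativity, applied to the defining relation $s\cdot u=u\cdot\rho(s)$, reduces, after using the commutation $u\cdot t=\rho^{-1}(t)\cdot u$, to the identity $\rho^{-1}\tilde\tau(s)=\tilde\tau\rho(s)$, which is precisely the dihedral relation already established. Since $\phi|_K=\tau\neq\mathrm{id}_K$, the anti-involution $\phi$ is of the second kind, proving the first assertion.

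For the moreover part, by Lemma \ref{CM-field} the center $K=\mathbb{Q}(\zeta_p)$ is a CM-field, so under the additional hypothesis that $B$ is a division ring Theorem \ref{existence of positive anti-involution} applies to the second-kind anti-involution $\phi$ and directly produces a positive anti-involution of the second kind on $B$. The conceptual heart of the argument is the dihedral relation $\tilde\tau\rho\tilde\tau=\rho^{-1}$, which encodes the compatibility of complex conjugation with the cyclic action $\rho$ on $\sqrt[p]{a}$; once this is in hand everything else is routine bookkeeping on the cyclic decomposition of $B$.
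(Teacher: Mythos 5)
Your construction is essentially the paper's: the map $\phi(u^is)=\tilde\tau(s)u^i$ you define coincides exactly with the paper's $\phi(u^i\zeta_p^j\sqrt[p]{a^k})=u^i\zeta_p^{ik-j}\sqrt[p]{a^k}$, the verification is the same direct check (your dihedral relation $\tilde\tau\rho\tilde\tau=\rho^{-1}$ just packages the paper's exponent bookkeeping more conceptually), and the positivity step is the identical appeal to Theorem \ref{existence of positive anti-involution}. Your explicit restriction to $u^p\in F^\times$ rather than $u^p\in K^\times$ is a fair point rather than a defect: the paper's own verification also implicitly needs $\phi(u^p)=u^p$, i.e.\ $u^p$ fixed by conjugation, and in the only application ($u^p=q$ a rational prime) this holds.
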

 \begin{proof}
  Define the map
  \begin{align*}
   \begin{array}{ccccc}
    \phi\colon & B & \rightarrow & B & \\
    & u^i\zeta_p^j\sqrt[p]{a^k} & \mapsto & u^i\zeta_p^{ik-j}\sqrt[p]{a^k} & (0\leq i,k\leq p-1,\ 0\leq j\leq p-2)
   \end{array}
  \end{align*}
  as a linear map over $\mathbb{Q}$.
  $\phi$ restricts to $K$ as
  \begin{align*}
   \begin{array}{ccccc}
    \phi|_K\colon & K & \rightarrow & K & \\
    & \zeta_p^j & \mapsto & \zeta_p^{-j} &  (0\leq j\leq p-1)
   \end{array}
  \end{align*}
  and $\phi$ fixes the field $F$ pointwise and so $\phi(a)=a$.
  Thus, by the equations
  \begin{align*}
   &\phi(1)=1,\\
   &\phi(\phi(u^i\zeta_p^j\sqrt[p]{a^k}))=\phi(u^i\zeta_p^{ik-j}\sqrt[p]{a^k})=u^i\zeta_p^j\sqrt[p]{a^k},
  \end{align*}
  \begin{align*}
   \phi(u^i\zeta_p^j\sqrt[p]{a^k}\cdot u^{i'}\zeta_p^{j'}\sqrt[p]{a^{k'}})&=\phi(u^i u^{i'}\cdot\rho^{i'}(\zeta_p^j\sqrt[p]{a^k})\zeta_p^{j'}\sqrt[p]{a^{k'}})\\
                                                                          &=\phi(u^{i+i'}\zeta_p^{i'k+j}\sqrt[p]{a^k}\zeta_p^{j'}\sqrt[p]{a^{k'}})\\
                                                                          &=\phi(u^{i+i'}\zeta_p^{i'k+j+j'}\sqrt[p]{a^{k+k'}})\\
                                                                          &=u^{i+i'}\zeta_p^{(i+i')(k+k')-(i'k+j+j')}\sqrt[p]{a^{k+k'}}
  \end{align*}
  and
  \begin{align*}
   \phi(u^{i'}\zeta_p^{j'}\sqrt[p]{a^{k'}})\phi(u^i\zeta_p^j\sqrt[p]{a^k})&=u^{i'}\zeta_p^{i'k'-j'}\sqrt[p]{a^{k'}}\cdot u^i\zeta_p^{ik-j}\sqrt[p]{a^k}\\
                                                                          &=u^{i'}u^i\cdot\rho^{i}(\zeta_p^{i'k'-j'}\sqrt[p]{a^{k'}})\zeta_p^{ik-j}\sqrt[p]{a^k}\\
                                                                          &=u^{i+i'}\zeta_p^{i'k'-j'+ik'}\sqrt[p]{a^{k'}}\zeta_p^{ik-j}\sqrt[p]{a^k}\\
                                                                          &=u^{i+i'}\zeta_p^{i'k'-j'+ik'+ik-j}\sqrt[p]{a^{k+k'}},
  \end{align*}
  $\phi$ is an anti-involution on $B$.\par
  As above, $\phi|_K$ fixes only the elements in $F$ pointwise and so $\phi$ is of second kind (cf.\ Definition \ref{second kind}).
  Moreover if $B$ is a division ring, by Theorem \ref{existence of positive anti-involution}, there exists a positive anti-involution on $B$ of second kind.
 \end{proof}
 \begin{lemma}\label{multiplicity}
  Let $L/K$ be a Galois extension of number fields with $[L:K]=n$ and let $\mathfrak{q}\subset\mathcal{O}_L$ be a prime ideal and take $\alpha\in L^{\times}\backslash U_L$ ($U_L$ is the set of the invertible elements of $\mathcal{O}_L$).\par
  Assume $\sigma(\mathfrak{q})=\mathfrak{q}$ for all $\sigma\in\mathrm{Gal}(L/K)$.
  Then, the multiplicity of $\mathfrak{q}$ of the prime ideal factorization of the ideal generated by $\mathrm{N}_{L/K}(\alpha)$ in $\mathcal{O}_L$ is a multiple of $n$.
 \end{lemma}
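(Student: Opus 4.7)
The plan is to reduce the statement to a one-line computation of the $\mathfrak{q}$-adic valuation of $\mathrm{N}_{L/K}(\alpha)$, using only the Galois-invariance hypothesis $\sigma(\mathfrak{q})=\mathfrak{q}$ for all $\sigma\in\mathrm{Gal}(L/K)$. Write $v_{\mathfrak{q}}$ for the normalized $\mathfrak{q}$-adic valuation on $L^{\times}$. The multiplicity of $\mathfrak{q}$ in the factorization of the fractional ideal $\mathrm{N}_{L/K}(\alpha)\mathcal{O}_L$ is by definition $v_{\mathfrak{q}}(\mathrm{N}_{L/K}(\alpha))$, so it suffices to prove that this integer is divisible by $n$.

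First I would observe that for any $\sigma\in\mathrm{Gal}(L/K)$ and any $\beta\in L^{\times}$, one has $v_{\mathfrak{q}}(\sigma(\beta))=v_{\sigma^{-1}(\mathfrak{q})}(\beta)$; this is immediate from the identity of fractional ideals $(\sigma(\beta))=\sigma((\beta))$ and the fact that $\sigma$ permutes the primes of $\mathcal{O}_L$. Applying the hypothesis $\sigma^{-1}(\mathfrak{q})=\mathfrak{q}$ to this formula gives
\begin{equation*}
 v_{\mathfrak{q}}(\sigma(\alpha))=v_{\mathfrak{q}}(\alpha) \qquad (\sigma\in\mathrm{Gal}(L/K)).
\end{equation*}

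Next, using that $L/K$ is Galois so that $\mathrm{N}_{L/K}(\alpha)=\prod_{\sigma\in\mathrm{Gal}(L/K)}\sigma(\alpha)$, and that $v_{\mathfrak{q}}$ is additive on products, I get
\begin{equation*}
 v_{\mathfrak{q}}(\mathrm{N}_{L/K}(\alpha))=\sum_{\sigma\in\mathrm{Gal}(L/K)} v_{\mathfrak{q}}(\sigma(\alpha))=n\,v_{\mathfrak{q}}(\alpha),
\end{equation*}
which is manifestly a multiple of $n$. This finishes the proof.

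There is essentially no main obstacle, as everything reduces to the two-step computation above; the hypothesis $\alpha\in L^{\times}\setminus U_L$ is only there to make the statement nontrivial, since for a unit the valuation is already zero. One can optionally cross-check the conclusion against Hilbert's ramification theory invoked earlier in Section \ref{Algebraic preparation}: setting $\mathfrak{p}=\mathfrak{q}\cap\mathcal{O}_K$, the assumption that $\mathrm{Gal}(L/K)$ acts trivially on the set of primes over $\mathfrak{p}$ forces $g=1$ so $ef=n$, and then the standard formula $v_{\mathfrak{p}}(\mathrm{N}_{L/K}(\alpha))=f\,v_{\mathfrak{q}}(\alpha)$ together with $\mathfrak{p}\mathcal{O}_L=\mathfrak{q}^{e}$ recovers the same identity $v_{\mathfrak{q}}(\mathrm{N}_{L/K}(\alpha))=ef\,v_{\mathfrak{q}}(\alpha)=n\,v_{\mathfrak{q}}(\alpha)$, which is reassuring.
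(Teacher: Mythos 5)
Your proof is correct and is essentially the same argument as the paper's: the paper factors $\alpha\mathcal{O}_L$ into primes, applies each $\sigma\in\mathrm{Gal}(L/K)$, and collects the exponent of $\mathfrak{q}$ in the product $\prod_{\sigma}\sigma(\alpha)\mathcal{O}_L$, which is exactly your identity $v_{\mathfrak{q}}(\mathrm{N}_{L/K}(\alpha))=n\,v_{\mathfrak{q}}(\alpha)$ written in the language of ideal factorizations rather than valuations. Your cross-check via $v_{\mathfrak{p}}(\mathrm{N}_{L/K}(\alpha))=f\,v_{\mathfrak{q}}(\alpha)$ and $g=1$ is consistent with how the lemma is actually applied in the proof of Theorem \ref{automorphism}.
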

 \begin{proof}
  Write the prime ideal factorization of $\alpha\mathcal{O}_L$ as
  \begin{align*}
   \alpha\mathcal{O}_L={\mathfrak{q}_1}^{e(\mathfrak{q}_1)}\cdots{\mathfrak{q}_g}^{e(\mathfrak{q}_g)}\quad(\mathfrak{q}_i\text{ are all different})
  \end{align*}
  and then
  \begin{align*}
   \sigma(\alpha)\mathcal{O}_L=\sigma(\mathfrak{q}_1)^{e(\mathfrak{q}_1)}\cdots\sigma(\mathfrak{q}_g)^{e(\mathfrak{q}_g)}
  \end{align*}
  for each $\sigma\in\mathrm{Gal}(L/K)$.\par
  Also, the next equation holds.
  \begin{align*}
   \left(\mathrm{N}_{L/K}(\alpha)\right)\mathcal{O}_L=\left(\prod_{\sigma\in\mathrm{Gal}(L/K)} \sigma(\alpha)\right) \mathcal{O}_L=\prod_{\sigma\in\mathrm{Gal}(L/K)} \prod_{i=1}^g \sigma(\mathfrak{q}_i)^{e(\mathfrak{q}_i)}
  \end{align*}\par
  By the assumption that $\sigma(\mathfrak{q})=\mathfrak{q}$ for all $\sigma\in\mathrm{Gal}(L/K)$, $\mathfrak{q}$ appears in the factorization of $\left(\mathrm{N}_{L/K}(\alpha)\right)\mathcal{O}_L$ if and only if it appears in the factorization of $\alpha\mathcal{O}_L$.\par
  Moreover, if $\mathfrak{q}$ appears in $\alpha\mathcal{O}_L$ and put $\mathfrak{q}_1=\mathfrak{q}$, then
  \begin{align*}
   \left(\mathrm{N}_{L/K}(\alpha)\right)\mathcal{O}_L=\mathfrak{q}^{e(\mathfrak{q})\cdot n}\prod_{\sigma\in\mathrm{Gal}(L/K)} \prod_{i=2}^g \sigma(\mathfrak{q}_i)^{e(\mathfrak{q_i})},
  \end{align*}
  and so the multiplicity of $\mathfrak{q}$ is a multiple of $n$.
 \end{proof}
 By combining these lemmas, Theorem \ref{automorphism} can be proved.
 \begin{proof}[Proof of Theorem \ref{automorphism}]
  Denote $\gamma=\zeta_p^k+\frac{1}{\zeta_p^k}$.
  Consider the sequence of Galois extensions $\mathbb{Q}(\zeta_p,\sqrt[p]{\gamma})\supset\mathbb{Q}(\zeta_p)\supset\mathbb{Q}$ of fields and denote $K=\mathbb{Q}(\zeta_p)$, $L=\mathbb{Q}(\zeta_p,\sqrt[p]{\gamma})$.
  Now $L/K$ is a cyclic extension of degree $p$ by Lemma \ref{cyclic extension2}.\par
  Also denote $K_0=\mathbb{Q}\left(\zeta_p+\frac{1}{\zeta_p}\right)$ and $G=\mathrm{Gal}(L/K)$.
  By Lemma \ref{CM-field}, $K$ is a totally complex quadratic extension of the totally real number field $K_0$ and so, $K$ satisfies the conditions of CM-field.\par
  Define
  \begin{align*}
   \begin{array}{cccc}
    \rho\colon & L & \rightarrow & L \\
    & \zeta_p & \mapsto & \zeta_p \\ 
    & \sqrt[p]{\alpha} & \mapsto & \sqrt[p]{\alpha}\zeta_p
   \end{array}.
  \end{align*}
  By using Theorem \ref{Cebotarev's density theorem} for the Galois extension $L/K$ and $\rho$, there exist infinitely many pairs $(\mathfrak{p},\mathfrak{q})$ of a prime ideal $\mathfrak{p}\subset\mathcal{O}_K$ and a prime ideal $\mathfrak{q}\subset\mathcal{O}_L$ over $\mathfrak{p}$ which satisfy the next conditions.
  \begin{enumerate}
  \renewcommand{\labelenumi}{\arabic{enumi}.}
   \item $\mathfrak{p}$ is unramified in $L$.
   \item The Frobenius automorphism $\left(\frac{K/F}{\mathfrak{q}}\right)$ is $\rho$.
  \end{enumerate}
  Fix a pair $(\mathfrak{p},\mathfrak{q})$ and define the prime number $q$ as $q\mathbb{Z}=\mathfrak{p}\cap\mathbb{Z}$.\par
  By Hilbert's ramification theory, it can be written as
  \begin{align*}
   q\mathcal{O}_K=\prod_{i=1}^g {\mathfrak{p}_i}^e\quad(\mathfrak{p}_i\text{ are all different}),\\
   {\mathfrak{p}_i}\mathcal{O}_L=\prod_{j=1}^{g_i} {\mathfrak{q}_{ij}}^{e_i}\quad(\mathfrak{q}_{ij}\text{ are all different}).
  \end{align*}
  Then, 
  \begin{align*}
   q\mathcal{O}_L=\prod_{i=1}^g \prod_{j=1}^{g_i} {\mathfrak{q}_{ij}}^{e\cdot e_i}
  \end{align*}
  and denote $f=[\mathcal{O}_K/\mathfrak{p}_i:\mathbb{Z}/q\mathbb{Z}]$, $f_i=[\mathcal{O}_L/\mathfrak{q}_{ij}:\mathcal{O}_K/\mathfrak{p}_i]$ with $efg=p-1$, $e_if_ig_i=p$ by the ramification theory again.\par
  Denote $\mathfrak{p}_1=\mathfrak{p}$ and $\mathfrak{q}_{11}=\mathfrak{q}$ in the above prime ideal factorizations.
  The condition 1 implies $e_1=1$ and by $e\leq p-1$, $ee_1$ is not a multiple of $p$.
  The condition 2 implies $\rho(\mathfrak{q})=\mathfrak{q}$ and so the decomposition group $\mathcal{D}_\mathfrak{q}$ is equal to $G$.
  The order of the decomposition group $\mathcal{D}_\mathfrak{q}$ is $e_1f_1$ and so $f_1=p$, and this implies $g_1=1$.
  Therefore the pair $(q,\mathfrak{q})$ satisfies the next conditions.
  \begin{itemize}
   \item $\sigma(\mathfrak{q})=\mathfrak{q}$ for all $\sigma\in G$
   \item the multiplicity of $\mathfrak{q}$ of the prime ideal factorization of the ideal $q\mathcal{O}_L$ is not a multiple of $p$.
  \end{itemize}
  Since $q$ is not an invertible element in $\mathcal{O}_L$, $q\not\in\mathrm{N}_{L/K}(U_L)$.
  Thus, by Lemma \ref{multiplicity}, $q$ cannot be written as $q=\mathrm{N}_{L/K}(\alpha)$ for any $\alpha\in L^{\times}$.\par
  Moreover, $q^p=\mathrm{N}_{L/K}(q)\in\mathrm{N}_{L/K}(L^{\times})$ and since $p$ is a prime number, the order of $q$ in $K^{\times}/\mathrm{N}_{L/K}(L^{\times})$ is exactly $p$.\par
  By applying Proposition \ref{construct division algebra}, there is a central simple division algebra $B$ of the form $B=\oplus_{i=0}^{p-1} u^{i}L$ with a symbol $u$ which satisfies $u^p=q$.
  Now the division algebra $B$ satisfies all the conditons in Theorem \ref{automorphism}.
 \end{proof}
 Now, $\zeta_p^k+\frac{1}{\zeta_p^k}$ is an algebraic integer and also $\left(\zeta_p^k+\frac{1}{\zeta_p^k}\right)^{-1}$ is an algebraic integer by Theorem \ref{constant term of minimal polynomial}.
 Thus, $\sqrt[p]{\zeta_p^k+\frac{1}{\zeta_p^k}}$ and $\sqrt[p]{\zeta_p^k+\frac{1}{\zeta_p^k}}^{-1}$ are both algebraic integers, and this implies the next result.
 \begin{theorem}\label{minimum value of Delta}
  There does not exist the minimum value of $\Delta$.
 \end{theorem}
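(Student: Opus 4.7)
The plan is to produce, for every prime $p>3$, an explicit automorphism $f_p$ of a simple abelian variety whose first dynamical degree is exactly $(2\cos(\pi/p))^{2/p}$, a value that strictly exceeds $1$ and tends to $1$ as $p\to\infty$. This immediately shows that $\Delta$ contains points arbitrarily close to $1$, and hence admits no minimum.

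First I would apply Theorem \ref{automorphism} with $k=1$ to obtain the CM-field $K=\mathbb{Q}(\zeta_p)$ (with $e_0=(p-1)/2$), the cyclic extension $L/K$ of degree $p$ containing $\alpha_p:=\sqrt[p]{\zeta_p+\zeta_p^{-1}}$, and the central simple division algebra $B_p=\bigoplus_{i=0}^{p-1}u^iL$ with $[B_p:K]=p^2$. By Lemma \ref{construction of anti-involution} (applied with $a=\zeta_p+\zeta_p^{-1}\in F$), $B_p$ carries a positive anti-involution of the second kind. Since $d=p\geq 5$, the hypothesis $dm\geq 3$ of Lemma \ref{Construction4} is satisfied with $m=1$, yielding a simple abelian variety $X_p$ of dimension $p^2(p-1)/2$ whose endomorphism algebra is isomorphic to $B_p$ and whose endomorphism ring contains the order $\mathcal{O}=\bigoplus_{i=0}^{p-1}u^i\mathcal{O}_L$. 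As recorded in the paragraph preceding Theorem \ref{minimum value of Delta}, both $\alpha_p$ and $\alpha_p^{-1}$ are algebraic integers (the latter because $\zeta_p+\zeta_p^{-1}$ is a unit in $\mathcal{O}_F$ by Theorem \ref{constant term of minimal polynomial}), so $\alpha_p\in\mathcal{O}_L\subseteq\mathcal{O}$ corresponds to an automorphism $f_p$ of $X_p$.

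Next I would compute $\lambda_1(f_p)$ via the Type 4 analysis of Section \ref{Calculation}: it equals the square of the maximal absolute value of the roots of the minimal polynomial of $\alpha_p$ over $\mathbb{Q}$. Every Galois conjugate $\alpha'$ of $\alpha_p$ satisfies $(\alpha')^p=2\cos(2\pi j/p)$ for some $1\leq j\leq(p-1)/2$, and conversely every such $j$ is realized, since any embedding of $F=\mathbb{Q}(\zeta_p+\zeta_p^{-1})$ extends to an embedding of $\mathbb{Q}(\alpha_p)$. Hence
\[
\max_{\sigma}|\sigma(\alpha_p)|=\bigl(\max_{j}|2\cos(2\pi j/p)|\bigr)^{1/p}=(2\cos(\pi/p))^{1/p},
\]
the maximum being attained at $j=(p-1)/2$, where $2\cos(2\pi j/p)=-2\cos(\pi/p)$. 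Therefore $\lambda_1(f_p)=(2\cos(\pi/p))^{2/p}$, which is strictly greater than $1$ for every $p>3$ and tends to $1$ as $p\to\infty$, completing the argument.

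The main obstacle is simply orchestrating the preceding lemmas for the particular $\alpha_p$—verifying that the positive anti-involution of Lemma \ref{construction of anti-involution} and the construction of Lemma \ref{Construction4} can both be applied to the division algebra $B_p$ and order $\mathcal{O}$ produced by Theorem \ref{automorphism}, and that $\alpha_p$ indeed lies in this order. Once the simple abelian variety is in hand, the dynamical-degree computation reduces to an elementary estimate on the cosines $2\cos(2\pi j/p)$.
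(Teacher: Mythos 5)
Your proposal is correct and follows essentially the same route as the paper: invoke Theorem \ref{automorphism} together with Remark \ref{remark about the order}, Lemma \ref{construction of anti-involution}, and Lemma \ref{Construction4} (with $d=p$, $e_0=\frac{p-1}{2}$, $m=1$) to realize $\sqrt[p]{\zeta_p^k+\zeta_p^{-k}}$ as an automorphism of a $\frac{p^2(p-1)}{2}$-dimensional simple abelian variety, then compute the first dynamical degree as $\left(2\cos\frac{\pi}{p}\right)^{2/p}$, which exceeds $1$ for $p>3$ and tends to $1$. Your identification of the maximal conjugate modulus at $j=\frac{p-1}{2}$ agrees with the paper's value $\sqrt[p]{\abs[\big]{\zeta_p^{(p-1)/2}+\zeta_p^{-(p-1)/2}}^2}$.
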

 \begin{proof}
  For a prime number $p>3$ and an integer $1\leq k\leq\frac{p-1}{2}$, there is a finite-dimensional central simple division algebra over the CM-field $\mathbb{Q}(\zeta_p)$ which contains $\sqrt[p]{\zeta_p^k+\frac{1}{\zeta_p^k}}$ by Theorem \ref{automorphism}.
  The constructed division algebra $B=\oplus_{i=0}^{p-1} u^{i}L$ in Theorem \ref{automorphism} has an order $\mathcal{O}=\oplus_{i=0}^{p-1} u^{i}\mathcal{O}_L$ by Remark \ref{remark about the order} and $\mathcal{O}$ contains $\sqrt[p]{\zeta_p^k+\frac{1}{\zeta_p^k}}$ and $\sqrt[p]{\zeta_p^k+\frac{1}{\zeta_p^k}}^{-1}$ since these are integral elements.
  By combining this with Lemma \ref{construction of anti-involution} and Lemma \ref{Construction4} for $d=p,e_0=\frac{p-1}{2}$ and $m=1$, there is an automorphism of a $\frac{p^2(p-1)}{2}$-dimensional simple abelian variety which corresponds to $\sqrt[p]{\zeta_p^k+\frac{1}{\zeta_p^k}}$.\par
  The minimal polynomial of $\sqrt[p]{\zeta_p^k+\frac{1}{\zeta_p^k}}$ over $\mathbb{Q}$ is
  \begin{align*}
   \prod_{k=1}^{\frac{p-1}{2}} \left(x^p-\left(\zeta_p^k+\frac{1}{\zeta_p^k}\right)\right)
  \end{align*}
  and so its first dynamical degree is $\sqrt[p]{\abs[\bigg]{\zeta_p^{\frac{p-1}{2}}+\frac{1}{\zeta_p^{\frac{p-1}{2}}}}^2}$ as in Section \ref{Calculation}.\par
  Now, $1<\sqrt[p]{\abs[\bigg]{\zeta_p^{\frac{p-1}{2}}+\frac{1}{\zeta_p^{\frac{p-1}{2}}}}^2}<\sqrt[p]{4}$ converges to $1$ as $p\to\infty$. 
  Thus, for any $\epsilon>0$, there is a first dynamical degree $\lambda$ of an automorphism of a simple abelian variety such that $1<\lambda<1+\epsilon$.
 \end{proof}
 By removing the automorphic condition, the next theorem holds.
 \begin{theorem}\label{analogous theorem}
  For any $\epsilon>0$, there exists a simple abelian variety $X$ and an endomorphism $f:X\rightarrow X$ which is not an automorphism such that $1<\lambda_1(f)<1+\epsilon$.
 \end{theorem}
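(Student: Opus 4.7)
The strategy is to adapt the construction from the proof of Theorem \ref{minimum value of Delta} by replacing the unit $\zeta_p^k+\zeta_p^{-k}$ (which was chosen precisely so that $\sqrt[p]{\zeta_p^k+\zeta_p^{-k}}$ became a unit in $\mathcal{O}_L$, hence an automorphism) by the rational integer $2$; then $\sqrt[p]{2}$ is an algebraic integer whose inverse is not, and so defines an endomorphism that fails to be an automorphism. Given $\epsilon>0$, first pick a prime $p>3$ large enough that $2^{2/p}<1+\epsilon$, set $K=\mathbb{Q}(\zeta_p)$, and define $L=K(\sqrt[p]{2})$. Since $x^p-2$ is irreducible over $\mathbb{Q}$ by Eisenstein and $p\nmid [K:\mathbb{Q}]=p-1$, no $p$-th root of $2$ can lie in $K$, so $2\notin K^{\times p}$ and $L/K$ is a Kummer cyclic extension of degree exactly $p$.

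Next, run the Chebotarev density argument from the proof of Theorem \ref{automorphism} verbatim for this $L/K$, using the generator $\rho\colon\sqrt[p]{2}\mapsto\zeta_p\sqrt[p]{2}$ of $\mathrm{Gal}(L/K)$. That produces a rational prime $q$ whose order in $K^\times/\mathrm{N}_{L/K}(L^\times)$ is exactly $p$. Proposition \ref{construct division algebra} then assembles a division algebra $B=\bigoplus_{i=0}^{p-1}u^iL$ over $K$ with $u^p=q$ and $[B:K]=p^2$, and Lemma \ref{construction of anti-involution} (applied with $a=2\in K_0$) equips $B$ with a positive anti-involution of the second kind. Since $dm=p\geq 5$, Lemma \ref{Construction4} with $d=p$, $e_0=(p-1)/2$, and $m=1$ yields a simple abelian variety $X$ of dimension $p^2(p-1)/2$ into whose endomorphism ring the order $\mathcal{O}=\bigoplus_{i=0}^{p-1}u^i\mathcal{O}_L$ embeds.

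The element $\alpha=\sqrt[p]{2}\in\mathcal{O}_L\subset\mathcal{O}$ then defines the desired endomorphism $f\colon X\to X$. Since $\alpha\neq 0$ in the division algebra $B$ and $X$ is simple, $f$ is an isogeny and in particular surjective. Its inverse $\alpha^{-1}=2^{-1/p}$ satisfies $2x^p-1=0$, which is not monic over $\mathbb{Z}$, so $\alpha^{-1}$ is not integral in $B$ and cannot lie in any order of $B$, hence not in $\mathrm{End}(X)$; thus $f$ is not an automorphism. The minimal polynomial of $\alpha$ over $\mathbb{Q}$ is $x^p-2$, whose roots $\sqrt[p]{2}\,\zeta_p^j$ all have modulus $2^{1/p}$, so the Type 4 recipe in Section \ref{Calculation} gives $\lambda_1(f)=2^{2/p}\in(1,1+\epsilon)$.

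The main obstacle I anticipate is conceptual rather than technical: verifying that each ingredient imported from the proof of Theorem \ref{minimum value of Delta}---cyclicity of $L/K$, the Chebotarev selection of $q$, the positive anti-involution, and the simple abelian variety---still applies when the radicand is a rational integer instead of $\zeta_p^k+\zeta_p^{-k}$. None of those arguments actually exploits the specific shape of the radicand beyond membership in $K_0$ and non-$p$-th-power-ness in $K$, so the translation is essentially mechanical; the only genuinely new verification is that $2\notin\mathbb{Q}(\zeta_p)^{\times p}$, handled by the short degree argument above.
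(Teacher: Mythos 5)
Your proposal is correct and follows essentially the same route as the paper: the paper proves this theorem by substituting $a=2$ into its Lemma \ref{cyclic extension2} and Theorem \ref{endomorphism not automorphism} (which redo the Chebotarev/cyclic-algebra construction for a rational radicand), then applies Lemma \ref{construction of anti-involution} and Lemma \ref{Construction4} with $d=p$, $e_0=\frac{p-1}{2}$, $m=1$, and concludes non-automorphy from the non-integrality of $2^{-1/p}$ and $\lambda_1(f)=2^{2/p}\to 1$. Your degree argument for $2\notin\mathbb{Q}(\zeta_p)^{\times p}$ matches the paper's, and all the remaining verifications you flag are handled the same way there.
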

 The thereom is proved by using the next lemma instead of Lemma \ref{cyclic extension1}.
 \begin{lemma}\label{cyclic extension2}
  For a positive integer $a$ and a prime number $p$ such that $\sqrt[p]{a}\notin\mathbb{Z}$, $\mathbb{Q}(\zeta_p,\sqrt[p]{a})/\mathbb{Q}(\zeta_p)$ is a cyclic extension of degree $p$. 
 \end{lemma}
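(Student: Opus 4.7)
My plan is to verify the three defining conditions for a cyclic Galois extension of degree $p$: separability, normality, and that the degree equals $p$, the last of which immediately gives cyclicity because a group of prime order is cyclic.

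Separability is free since $\mathbb{Q}(\zeta_p)$ has characteristic zero. For normality, the conjugates of $\sqrt[p]{a}$ over $\mathbb{Q}(\zeta_p)$ are all of the form $\zeta_p^i \sqrt[p]{a}$, and these lie in $\mathbb{Q}(\zeta_p,\sqrt[p]{a})$ because $\zeta_p$ does; so $\mathbb{Q}(\zeta_p,\sqrt[p]{a})$ is the splitting field of $x^p - a$ over $\mathbb{Q}(\zeta_p)$ and hence normal.

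The main content is to show $[\mathbb{Q}(\zeta_p,\sqrt[p]{a}):\mathbb{Q}(\zeta_p)] = p$. By Lemma \ref{irreducibility}, it suffices to establish $a \notin \mathbb{Q}(\zeta_p)^{\times p}$. I will argue by contradiction: suppose $a = \beta^p$ with $\beta \in \mathbb{Q}(\zeta_p)^\times$. Then $\beta$ is a root of $x^p - a$, so $\beta = \zeta_p^i \sqrt[p]{a}$ for some integer $i$, and consequently $\sqrt[p]{a} \in \mathbb{Q}(\zeta_p)$. I then plan to rule this out by a degree comparison. Since $a \in \mathbb{Z}_{>0}$ with $\sqrt[p]{a} \notin \mathbb{Z}$, and $\sqrt[p]{a}$ is an algebraic integer, rationality of $\sqrt[p]{a}$ would force it into $\mathbb{Z}$; therefore $a \notin \mathbb{Q}^{\times p}$. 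Applying Lemma \ref{irreducibility} again over $\mathbb{Q}$ itself shows $x^p - a$ is irreducible over $\mathbb{Q}$, so $[\mathbb{Q}(\sqrt[p]{a}):\mathbb{Q}] = p$. But $[\mathbb{Q}(\zeta_p):\mathbb{Q}] = p - 1$, and $p \nmid p-1$, so $\mathbb{Q}(\sqrt[p]{a})$ cannot be a subfield of $\mathbb{Q}(\zeta_p)$, which contradicts $\sqrt[p]{a} \in \mathbb{Q}(\zeta_p)$.

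I do not anticipate a real obstacle here; the only place one must be a little careful is the step converting the hypothesis $\sqrt[p]{a} \notin \mathbb{Z}$ into the statement $a \notin \mathbb{Q}^{\times p}$, for which I will invoke the integral closure of $\mathbb{Z}$ in $\mathbb{Q}$ (equivalently, the rational root theorem applied to $x^p - a$). Compared with Lemma \ref{cyclic extension1}, where the analogous non-$p$-th-power statement over $\mathbb{Q}(\zeta_p)$ required a direct congruence calculation in $\mathbb{Z}[\zeta_p]$, here the coprimality of $p$ and $p-1$ lets the degree argument do all the work.
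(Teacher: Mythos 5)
Your proposal is correct and follows essentially the same route as the paper: separability and normality come for free, and the degree is established by showing $a\notin\mathbb{Q}(\zeta_p)^{\times p}$ via the contradiction that a root of the (rationally irreducible) polynomial $x^p-a$ lying in $\mathbb{Q}(\zeta_p)$ would force $p\mid p-1$. Your explicit justification that $\sqrt[p]{a}\notin\mathbb{Z}$ implies $a\notin\mathbb{Q}^{\times p}$ (via integrality/the rational root theorem) is a point the paper compresses into one clause, so you are if anything slightly more careful.
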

 \begin{proof}
  The extension $\mathbb{Q}(\zeta_p,\sqrt[p]{a})/\mathbb{Q}(\zeta_p)$ is separable and normal as in the proof of Lemma \ref{cyclic extension1}.\par
  Assume $a\in{\mathbb{Q}(\zeta_p)}^{\times p}$ and denote $a=\alpha^p$ for $\alpha\in{\mathbb{Q}(\zeta_p)}^{\times}$.
  By the irreducibility of $x^p-a\in\mathbb{Z}[x]$ from $\sqrt[p]{a}\notin\mathbb{Z}$, $[\mathbb{Q}(\alpha):\mathbb{Q}]=p$ and so this is in contradiction with $\mathbb{Q}(\zeta_p)\supset\mathbb{Q}(\alpha)\supset\mathbb{Q}$.\par
  Thus, $x^p-a$ is irreducible over $\mathbb{Q}(\zeta_p)$ by Lemma \ref{irreducibility} and so the extension $\mathbb{Q}(\zeta_p,\sqrt[p]{a})/\mathbb{Q}(\zeta_p)$ is of degree $p$.\par
  Therefore $\mathbb{Q}(\zeta_p,\sqrt[p]{a})/\mathbb{Q}(\zeta_p)$ is a Galois extension and the Galois group $\mathrm{Gal}\left(\mathbb{Q}(\zeta_p,\sqrt[p]{a})/\mathbb{Q}(\zeta_p)\right)$ has order $p$ and this is the cyclic group generated by $\sqrt[p]{a}\mapsto\zeta_p\sqrt[p]{a}$.
 \end{proof}
 \begin{theorem}\label{endomorphism not automorphism}
  For an odd prime number $p$, a positive integer $a$ with $\sqrt[p]{a}\not\in\mathbb{Z}$, there are a CM-field $K$, a central simple division algebra $B$ over $K$ of the form $B=\oplus_{i=0}^{p-1} u^{i}L$ with a symbol $u$, a cyclic extension $L/K$ of degree $p$, and $[B:K]=p^2$ such that $\sqrt[p]{a}\in L$.
 \end{theorem}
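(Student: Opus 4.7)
The plan is to follow the argument used for Theorem \ref{automorphism} almost verbatim, substituting the element $\gamma=\zeta_p^k+\frac{1}{\zeta_p^k}$ with the rational integer $a$, and invoking Lemma \ref{cyclic extension2} in place of Lemma \ref{cyclic extension1}.

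First I would set $K=\mathbb{Q}(\zeta_p)$ and $L=K(\sqrt[p]{a})$. Lemma \ref{CM-field} shows that $K$ is a CM-field, and Lemma \ref{cyclic extension2} (applicable because $\sqrt[p]{a}\notin\mathbb{Z}$ by hypothesis) shows that $L/K$ is a cyclic extension of degree $p$. Let $\rho$ be the generator of $G=\mathrm{Gal}(L/K)$ determined by $\rho(\zeta_p)=\zeta_p$ and $\rho(\sqrt[p]{a})=\zeta_p\sqrt[p]{a}$.

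Next I would apply \v{C}ebotarev's density theorem (Theorem \ref{Cebotarev's density theorem}) to the Galois extension $L/K$ and the element $\rho\in G$ to obtain infinitely many pairs $(\mathfrak{p},\mathfrak{q})$ with $\mathfrak{p}\subset\mathcal{O}_K$ a prime ideal unramified in $L$, $\mathfrak{q}\subset\mathcal{O}_L$ a prime over $\mathfrak{p}$, and Frobenius automorphism $\left(\frac{L/K}{\mathfrak{q}}\right)=\rho$. Fix such a pair and let $q$ be the rational prime with $q\mathbb{Z}=\mathfrak{p}\cap\mathbb{Z}$. Exactly as in the proof of Theorem \ref{automorphism}, Hilbert's ramification theory applied to the two extensions $L/K$ and $K/\mathbb{Q}$ yields: (i) the multiplicity of $\mathfrak{q}$ in the prime ideal factorization of $q\mathcal{O}_L$ is not divisible by $p$, because $e_1=1$ from unramifiedness in $L/K$ and the ramification index in $K/\mathbb{Q}$ is at most $p-1$; and (ii) $\sigma(\mathfrak{q})=\mathfrak{q}$ for every $\sigma\in G$, because $\rho$ generates $\mathcal{D}_\mathfrak{q}$, which therefore coincides with $G$.

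By Lemma \ref{multiplicity}, no $\alpha\in L^{\times}$ satisfies $\mathrm{N}_{L/K}(\alpha)=q$, while $q^p=\mathrm{N}_{L/K}(q)$ is clearly a norm; since $p$ is prime this forces the order of $q$ in $K^\times/\mathrm{N}_{L/K}(L^\times)$ to be exactly $p$. I would then apply Proposition \ref{construct division algebra} with $E=L$, $F=K$, $n=p$, and the element taken to be $q$, producing a central simple division algebra $B=\oplus_{i=0}^{p-1}u^i L$ over $K$ with $u^p=q$ and $[B:K]=p^2$. By construction $\sqrt[p]{a}\in L\subset B$, so $B$ satisfies all of the required properties. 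Since this proof is a direct transcription of the argument for Theorem \ref{automorphism}, with the only substantive replacement being the use of Lemma \ref{cyclic extension2} rather than Lemma \ref{cyclic extension1}, there is no truly difficult new step; the only point that requires care is checking that the degree computation in Lemma \ref{cyclic extension2} still goes through and thus that $L/K$ really is a degree-$p$ cyclic extension, which is exactly the content of the hypothesis $\sqrt[p]{a}\notin\mathbb{Z}$.
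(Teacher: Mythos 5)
Your proposal is correct and follows essentially the same route as the paper, which likewise reduces the statement to a verbatim rerun of the proof of Theorem \ref{automorphism} with $\zeta_p^k+\frac{1}{\zeta_p^k}$ replaced by $a$ and Lemma \ref{cyclic extension1} replaced by Lemma \ref{cyclic extension2}. All the key steps (\v{C}ebotarev for the Frobenius $\rho$, the ramification-index count showing the multiplicity of $\mathfrak{q}$ in $q\mathcal{O}_L$ is prime to $p$, Lemma \ref{multiplicity} to get that $q$ has order exactly $p$ in $K^\times/\mathrm{N}_{L/K}(L^\times)$, and Proposition \ref{construct division algebra}) match the paper's argument.
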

 \begin{proof}
  Denote $K=\mathbb{Q}(\zeta_p)$, $L=\mathbb{Q}\left(\zeta_p, \sqrt[p]{a}\right)$ as in the proof of Theorem \ref{automorphism}.
  $L\supset K\supset\mathbb{Q}$ is a sequence of Galois extensions and $L/K$ is a cyclic extension of degree $p$ by Lemma \ref{cyclic extension2}.\par
  As in the proof of Theorem \ref{automorphism}, by applying Theorem \ref{Cebotarev's density theorem} for the Galois extension $L/K$, there exists a pair $(q,\mathfrak{q})$ of a prime number $q$ and a prime ideal $\mathfrak{q}\subset\mathcal{O}_L$ such that 
  \begin{itemize}
   \item $\sigma(\mathfrak{q})=\mathfrak{q}$ for all $\sigma\in\mathrm{Gal}(L/K)$
   \item The multiplicity of $\mathfrak{q}$ of the prime ideal factorization of the ideal $q\mathcal{O}_L$ is not a multiple of $p$.
  \end{itemize}
  Thus, as in the proof of Theorem \ref{automorphism}, by Lemma \ref{multiplicity}, the order of $q$ in $K^{\times}/\mathrm{N}_{L/K}(L^{\times})$ is exactly $p$.\par
  By Proposition \ref{construct division algebra}, $B=\oplus_{i=0}^{p-1} u^{i}L$ with a symbol $u$ which satisfies $u^p=q$ is a central simple division algebra over $K$ with $[B:K]=p^2$ and satisfies all the conditons in Theorem \ref{endomorphism not automorphism}.
 \end{proof}
 \begin{proof}[Proof of Theorem \ref{analogous theorem}]
  For an integer $a$ and an odd prime number $p$ with $\sqrt[p]{a}\not\in\mathbb{Z}$, the central simple division algebra over the CM-field $\mathbb{Q}(\zeta_p)$ constructed in Theorem \ref{endomorphism not automorphism} has an order which contains $\sqrt[p]{a}$ by Remark \ref{remark about the order}.
  Also by Lemma \ref{construction of anti-involution} and Lemma \ref{Construction4} for $d=p,e_0=\frac{p-1}{2}$ and $m=1$, there is an endomorphism of a $\frac{p^2(p-1)}{2}$-dimensional simple abelian variety which corresponds to $\sqrt[p]{a}$ and its first dynamical degree is $\sqrt[p]{a^2}$ as in Section \ref{Calculation}.\par
  By substituting $a=2$, $\sqrt[p]{a}\not\in\mathbb{Z}$ holds and $\sqrt[p]{a^2}$ converges to $1$ as $p\to\infty$.
  Thus, the first dynamical degrees of this type of endomorphisms are not $1$ and have no mimimum value.\par
  Also, an endomorphism of this type cannot be an automorphism because $\frac{1}{\sqrt[p]{a}}$ is not an algebraic integer and so the proof is concluded.
 \end{proof}

\renewcommand{\refname}{References}

\end{document}